\newtheorem{theorem}{Theorem}[section]
\newtheorem{definition}[theorem]{Definition}
\newtheorem{lemma}[theorem]{Lemma}
\newtheorem{proposition}[theorem]{Proposition}
\newcommand{\be}{\begin{equation}}
\newcommand{\ee}{\end{equation}}
\newcommand{\al}{\alpha}
\newcommand{\arctanh}{{\rm atanh}}
\numberwithin{equation}{section}
\DeclareMathOperator*{\argmax}{arg\,max}
\def\e{\varepsilon}
\def\a{\alpha}
\def\b{\beta}
\def\g{\gamma}
\def\d{\delta}
\def\O{\Omega}
\def\o{\omega}
\def\R{\mathbb{R}}
\def\E{\mathbb E}
\title{Dynamical Gibbs-non-Gibbs transitions in Curie-Weiss 
Widom-Rowlinson models }
\author
{
 Sascha Kissel\footnote{
   Ruhr-Universit\"at Bochum, Fakult\"at f\"ur Mathematik, Universit\"atsstra\ss e 150, 44780 Bochum, Germany.
   E-mail: {\tt sascha.kissel@ruhr-uni-bochum.de},  {\tt christof.kuelske@ruhr-uni-bochum.de}
 }
 \and Christof K\"ulske\footnotemark[\value{footnote}] 
 }
\date{\today}
\begin{document} 
\maketitle 

\begin{abstract} We consider the Curie-Weiss Widom-Rowlinson model for particles with spins and holes, 
with a repulsion strength $\b>0$ 
between particles of opposite spins.  We provide a closed solution of the model, and investigate 
dynamical Gibbs-non-Gibbs transitions for the time-evolved model under independent 
stochastic symmetric spin-flip dynamics.  
We show that, for sufficiently large $\b$ after a transition time, 
continuously many bad empirical measures appear. These lie on (unions of) curves on the simplex  
whose time-evolution we describe.  
%
%
%
%
%
%

\end{abstract}
\textbf{AMS 2000 subject classification:} 82B20, 82B26, 82C20\vspace{0.3cm}\\ 
\textbf{Keywords:} Widom-Rowlinson model, Curie-Weiss model, mean-field, phase transitions, dynamical 
Gibbs vs. non-Gibbs transitions, dynamical, large deviation principles. 
\newpage
\section{Introduction}

The investigation of dynamical Gibbs-non Gibbs transitions can be undertaken 
for models in different geometries, in particular for lattice systems, for mean-field systems, for Kac-systems, 
for systems of point particles in the continuum.  

Historically the first example of such a study of the loss 
and possible recovery of the Gibbs property in the course of a time evolution from an initial 
infinite-volume Gibbs measure 
was given for the Ising model on the lattice, under independent symmetric spin-flip, cf. \cite{enter-fernandez-hollander-redig02}. 
The Curie-Weiss Ising model under symmetric spin-flip was first investigated 
in \cite{kuelske-le07}, using the appropriate notion of sequential Gibbsianness (see below), 
see also \cite{fernandaz-hollander-martinez13}. The notion of sequential Gibbsianness is to be used for Kac-models 
on the torus, too, for which spin configurations have a spatial structure, 
where it relates to hydrodynamic 
scaling, cf. \cite{fernandaz-hollander-martinez14},\cite{jahnel-kuelske17a},\cite{henning-kraaij-kuelske}. 
In the time-evolved Curie-Weiss Ising model non-Gibbsian behavior at low temperatures appears with 
symmetry-breaking in the set of bad magnetizations for an intermediate time-interval, 
and this happens already under independent spin-flip. 
A variety of interesting phenomena appear 
for interacting dynamics,  
in particular in the regime of strongly interacting dynamics, which gives rise to periodic orbits in 
the associated Hamiltonian flow, cf. \cite{ermolaev-kuelske10},\cite{kraaij-redig-zuijlen17}. 

For systems of point particles in infinite Euclidean space, the Gibbsian formalism is well-established (see \cite{preston-book}\cite{ruelle-book},\cite{dereudre}) and statements which are analogous to those for lattice systems tend to be more difficult. 
An important such system is the Widom-Rowlinson model.
It has a repulsive interaction between particles of different colors, 
and shows a phase-transition at high intensity, proved by 
Peierls arguments or percolation ideas, cf. \cite{chayes-chayes-kotecky95},\cite{ruelle71},\cite{bricmont-kuroda-lebowitz85}. 

In \cite{jahnel-kuelske17b} dynamical GnG transitions for the WR model in Euclidean space 
with hardcore intercolor interaction 
were investigated under independent spin-flip dynamics 
which keeps the spatial degrees of freedom fixed. 
The main features found in that analysis were an immediate loss of the Gibbs property and
the possibility of full-measure discontinuities for the time-evolved measure in the percolating region. 
Immediate loss is quite unusual in the lattice world for regular interactions  
(see results for the preservation of short-time Gibbsianness \cite{le-redig02},\cite{kuelske-opoku08}), 
and in the mean-field world (see however the somewhat pathological example 
of \cite{hollander-redig-zuijlen15}). Full-measure discontinuities under time-evolution 
had not been observed for lattice or mean-field systems so far, however they 
might appear on trees \cite{enter-ermolaev-Iacobelli-kuelske12},  
see also the examples of transformed measures not coming from a time-evolution 
showing full-measure discontinuities 
on the lattice in \cite{kuelske-le-redig04} and in mean field \cite{kuelske03}. 
Natural versions of the WR model are formulated 
also as a lattice system 
(\cite{gallavotti-lebowitz71},\cite{higuchi-takei04}), or as a mean-field system which we will 
study here.  
It is the purpose of this note to investigate the Curie-Weiss WR model with a soft repulsion with a strength 
$\beta>0$, under independent symmetric spin-flip dynamics, and give a detailed description of 
the types of transitions and their sets of bad empirical measures. 

In the first step we provide the necessary static analysis: 
The Curie-Weiss WR model is an extension of the Curie-Weiss Ising model
(which is recovered as a special case for full occupation density) with the additional degrees of freedom  
due to the occurrence of holes. 
Using suitable parametrizations, the model is solved in terms of closed solution formulas, see Theorem \ref{closedsolution}, relating typical empirical 
measures for spins and holes to model parameters $\b$, and the a priori distribution $\al$. 
It shares some properties  with the Curie-Weiss Ising model, but it is richer: 
Like the Ising model it has a second order phase transition in a magnetization variable, 
with usual mean-field critical exponents, unlike the Ising model it  
has a second order phase transition in occupation density in its attractive (antiferromagnetic) version. 
For related but different work in the grand-canonical framework, see \cite{georgii-zagrebnov98},\cite{kozitsky-kozlovskii}.


Next we come to the dynamics, for which we restrict to the symmetric model at time zero 
with equal a priori probabilities for plus particles and minus particles.  
We show the following: For small enough repulsion $\beta\leq 2$ the model preserves the sequentially Gibbs property for all times. For strong enough repulsion $\b>2$, the model loses the sequential Gibbs property after a finite time, 
and a continuum of bad empirical measures on the simplex appears which evolves with time and never becomes empty again. 
In the most interesting regime, at very strong repulsion $\beta>3$, the set of 
bad empirical measures undergoes the following type of time-evolution: starting from  the empty set for small 
times, two symmetric arcs appear at a transition time, from these    
a Y-shaped region is formed, which then ultimately degenerates at a final transition time into a growing line. 
Our analysis relies on conditional large deviations, 
where we are able to make use of previous results for the Curie-Weiss Ising model \cite{kuelske-le07}, 
for the relevant bifurcation analysis (with appearance of the Butterfly-singularity, see \cite{poston-stewart-book}). 
 Finally we discuss and illustrate the almost-Gibbsian behavior of the time-evolved model,  
 see fig. \ref{fig: atypic 2.8} and \ref{fig: atypic 5}.

\section{Model and main results}
 
\subsection{The Curie-Weiss Widom-Rowlinson model and sequential Gibbs property}

We denote the single-site state space by $E:= \{-1,0,1\}$.  
We write $\Omega_N = E^N$ for the state space at finite system size $N\in \mathbb{N}$.

\begin{definition}
	 The finite-volume Gibbs measure  at system size $N\in\mathbb{N}$ of the {\em Curie-Weiss 
	 	Widom-Rowlinson model}  with 
	 a priori measure $\al\in \mathcal{M}_1(E)$ and repulsion strength $\beta>0$ is defined to be the probability 
	 measure on $\Omega_N$ given by
	 \begin{align}\label{WR}
	 \mu_{N,\beta,\al}(\omega_{[1,N]}):=\frac{1}{Z_{N,\beta,\al}} e^{-\frac{\beta}{2N} \sum_{1\leq i,j\leq N}\mathds{1}(\omega_i\omega_j =-1)}\prod_{j=1}^N \al(\omega_j)
	 \end{align}
	 for $\omega_{[1,N]}= (\omega_i)_{1\leq i \leq N}\in E^N$ where is  the partition function $Z_{N,\beta,\al}$ is determined  by the normalization requirement. 
\end{definition}

%

If $\omega_i=0$ we say that there is no particle at site $i$, if $|\omega_i|=1$ we say that a particle is present at $i$, where  
we interpret the value $-1$ as particle with a negative spin, and $+1$ as a particle with positive spin. 
In the model there is no interaction between particles and holes, no interaction between particles of the same sign, 
but a repulsion between pairs of particles of opposite spin with strength $\b>0$.  The interaction disfavors configurations with many particles 
of opposite signs present, so it is of a ferromagnetic type.  

%
For a given a priori measure  we call 
$\a(\{1,-1\})$ the occupation density, $\a(0)$ the hole density and write $\al^*:=\frac{\al(1)-\al(-1)}{\al(1)+\al(-1)}$ for the magnetization on occupied sites. We call the a priori measure ($\pm$)-symmetric if $\al^*=0$.
For our study of time-evolved measures below we will use the intrinsic definition of sequential 
Gibbsianness for sequences of permutation invariant measures (See \cite{haeggstroem-kuelske04}). 
\begin{definition}\label{def: seqG}
	A sequence of exchangeable measures  $\mu_N\in \mathcal{M}_1(\O_N)$ 
	is called sequentially Gibbs iff for all limiting 
	empirical measures $\al_f\in\mathcal{M}_1(E)$ the following is true: 
	
	For all sequences of conditionings
	$(\omega_{[2,N]})_{N\geq 2}$ with $\omega_{[2,N]}\in E^{N-1}$  whose empirical measures converge, 
	  $\frac{1}{N-1}\sum_{i=2}^N  \d_{\o_i}
	\rightarrow \al_f$,   the limit of the single-site conditional probabilities 
	\begin{align}\label{sequential}
	\lim_{N\rightarrow \infty}\mu_N(\omega_1\vert \omega_{[2,N]}) =:\g(\omega_1\vert \al_f). 
	\end{align}
	exists and does not depend on the choice of the sequence $(\omega_{[2,N]})_{N\geq 2}$.
	
	We say that $\al_f$ is a {\em bad empirical measure} of the model if \eqref{sequential} fails to hold, and different limits for $\mu_N(\omega_1\vert \omega_{[2,N]})$ can be constructed, 
	for two sequences of conditionings whose empirical measures converge to the same $\al_f$. 
\end{definition}

As a general consequence, if a mean-field model $\mu_N$ is sequentially Gibbs, the resulting 
{\em specification kernel} $\al_f\mapsto \g(\cdot \vert \al_f)$  is continuous as a self-map on the simplex 
$\mathcal{M}_1(\{-1,0,1\})$ (cf. \cite{zuijlen18},\cite{henning-kraaij-kuelske}). 
This makes clear that the sequential Gibbs property provides us with 
continuous dependence of conditional probabilities (here: in the limit), 
which is an essential requirement for 
Gibbsian theory on the lattice (\cite{enter-fernandez-sokal93},\cite{georgii-book}). 

Let us check our original model: 
The Curie-Weiss Widom-Rowlinson model with arbitrary a priori measure $\a$, at any repulsion $\b$, 
defined in terms of the sequence of finite-volume 
measures \eqref{WR} is indeed sequentially Gibbs, with specification kernel given by 
\begin{align*}
\g_{\b,\al}(\omega_1\vert \al_f)
= \frac{e^{- \beta \left( \mathds{1}(\omega_1=-1)\al_f(1)+\mathds{1}(\omega_1=1)\al_f(-1) \right)}\al(\omega_1) }{\sum_{\tilde\omega_1\in \{-1,0,1\}}e^{- \beta \left( \mathds{1}(\tilde\omega_1=-1)\al_f(1)+\mathds{1}(\tilde\omega_1=1)\al_f(-1) \right)}\al(\tilde\omega_1)}. 
\end{align*}
which is clearly a continuous function in $\al_f$ (in the usual Euclidean topology on the simplex). 
This formula follows from a simple rewriting of the Hamiltonian in exponent of \eqref{WR}  using $\mathds{1}(\omega_i\omega_j=-1)= \mathds{1}(\omega_i=-1)\mathds{1}(\omega_j=1)+ \mathds{1}(\omega_i=1)\mathds{1}(\omega_j=-1)$ and introducing  
the empirical measures on spins $2,\dots,N$.  

\subsection{Solution of the static Curie{-}Weiss Widom{-}Rowlinson model}

By standard large deviation arguments the pressure exists and equals 
\begin{equation}\label{equi: pressure1}
\begin{split}
		p(\beta,\alpha)&:= \lim_{N\rightarrow \infty} \frac{1}{N} \log Z_{N,\beta,\al} \cr
		&= \sup_{\nu\in \mathcal{M}_1(\{-1,0,1\})} (- \b \nu(1)\nu(-1)- I(\nu\vert \alpha ))
\end{split}
\end{equation}	
where $I$ denotes the relative entropy. Indeed, this follows from Varadhan's lemma and a rewriting 
of the Hamiltonian in the exponent of \eqref{WR} in terms of the empirical measure $\frac{1}{N}\sum_{i=1}^N \d_{\o_i}$ 
which is associated to a configuration $\o$. From Varadhan's lemma also follows that the negative of the quantity below the $\sup$, namely 
$\nu\mapsto  \b \nu(1)\nu(-1)+ I(\nu\vert \alpha )-C$ is the large deviation rate function for the distribution 
of the empirical measure under $\mu_{N,\b,\al}$, where the constant 
$C$ is determined such that the infimum becomes zero. 

Hence the maximizers in the sup in \eqref{equi: pressure1} (which will be non-unique at some $\b,\al$, namely when phase transitions  
of the model occur)
are the {\em typical empirical measures} at $\b,\al$. On these the distribution concentrates exponentially fast in $N$. 

It remains of course to discuss the behavior of the 
maximizers to get insight into its behavior, and in particular understand its transitions.  As a main piece of 
information we will obtain the following theorem. 

\begin{theorem}\label{thm: second order}
The symmetric model at any $\alpha(1)=\alpha(-1)>0$
has a second order phase transition driven by repulsion strength $\beta>0$ 
at the critical repulsion strength $\beta_c= 2+e \frac{\alpha(0)}{\alpha(1)}$.  
\end{theorem}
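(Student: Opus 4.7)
The plan is to parametrize $\nu\in\M_1(\{-1,0,1\})$ by the occupation density $\rho=\nu(1)+\nu(-1)$ and the magnetization $m=\nu(1)-\nu(-1)$, so that $\nu(\pm 1)=(\rho\pm m)/2$ and $\nu(0)=1-\rho$. The variational functional in \eqref{equi: pressure1} becomes
\begin{equation*}
F_\b(\rho,m) := -\tfrac{\b}{4}(\rho^2-m^2) - I(\nu\vert\al),
\end{equation*}
which, by the hypothesis $\al(1)=\al(-1)$, is invariant under $m\mapsto -m$. Hence $m=0$ is automatically a critical point of $F_\b(\rho,\cdot)$ for every $\rho$, and the typical empirical measures correspond to the global maximizers of $F_\b$.

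First I would analyse the symmetric branch. Imposing $\partial_\rho F_\b(\rho,0)=0$ yields the implicit equation
\begin{equation*}
\frac{\rho}{1-\rho} = \frac{2\al(1)}{\al(0)}\,e^{-\b\rho/2},
\end{equation*}
whose monotonicity in $\rho$ gives a unique smooth symmetric branch $\rho^*(\b)$. By the $m\mapsto -m$ symmetry $\partial^2_{m\rho}F_\b$ vanishes on $\{m=0\}$, so the Hessian at $(\rho^*(\b),0)$ is block diagonal and stability in $m$ is governed solely by
\begin{equation*}
\partial^2_m F_\b(\rho,0) = \tfrac{\b}{2} - \tfrac{1}{\rho},
\end{equation*}
which vanishes precisely when $\b\rho = 2$. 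Substituting $\rho=2/\b$ into the fixed-point equation and solving for $\b$ yields the announced closed form $\b_c = 2 + e\,\al(0)/\al(1)$. For $\b<\b_c$ the symmetric point is a strict local maximum, and by strict convexity of $I(\cdot\vert\al)$ it is even the unique global maximizer; for $\b>\b_c$ it becomes a saddle in the $m$-direction and a symmetric pair of maximizers $(\tilde\rho(\b),\pm m(\b))$ bifurcates off.

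To establish the second-order character of the transition, I would reduce to an effective one-variable Landau problem via the implicit function theorem: the non-degeneracy of $\partial^2_\rho F_\b$ near $(\rho^*(\b_c),0)$ gives a unique smooth solution $\rho=\hat\rho(m;\b)$ of $\partial_\rho F_\b = 0$, and Taylor-expanding $f_\b(m):=F_\b(\hat\rho(m;\b),m)$ in $m$ around $0$ and in $\b$ around $\b_c$ produces the normal form
\begin{equation*}
f_\b(m) - f_\b(0) = \tfrac{1}{2}A(\b-\b_c)\,m^2 + \tfrac{1}{4}B\,m^4 + O(m^6), \qquad A>0,
\end{equation*}
whence the standard mean-field law $m(\b) \sim (\b-\b_c)_+^{1/2}$ and a continuous but non-analytic second derivative of the pressure at $\b_c$. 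The step I expect to cost the most work is verifying that the quartic coefficient $B$ is strictly positive, so that the bifurcation is a supercritical pitchfork rather than one of the degenerate higher-order butterfly type that appears later for the time-evolved problem; this is an elementary but lengthy fourth-order computation of $I(\nu\vert\al)$ along the branch $\rho=\hat\rho(m;\b_c)$, together with a global argument excluding competing maximizers far from $(\rho^*(\b_c),0)$, which follows from strict convexity of $I(\cdot\vert\al)$ and boundedness of the interaction term $\b\nu(1)\nu(-1)$.
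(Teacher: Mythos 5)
Your local analysis is correct and, modulo a change of coordinates, reproduces the paper's first approach: the paper likewise finds the unique symmetric stationary point from the fixed-point equation $e^{-\beta\nu_s(1)}=\frac{\nu_s(1)}{1-2\nu_s(1)}\frac{\al(0)}{\al(1)}$ (identical to yours with $\rho=2\nu_s(1)$), computes the Hessian of $-H_\beta(\nu)-I(\nu\vert\al)$ at this point, and identifies $\beta_c=2+e\,\al(0)/\al(1)$ as the value where an eigenvalue vanishes (Lemma \ref{lem: matrix inde}). Your $(\rho,m)$ coordinates make the Hessian block diagonal and the computation slightly cleaner, and your substitution $\rho=2/\beta$ into the fixed-point equation correctly recovers $\beta_c$. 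One small omission: to conclude that the symmetric point is a strict local maximum for \emph{all} $\beta<\beta_c$ and a saddle for \emph{all} $\beta>\beta_c$, you need the sign of $\frac{\beta}{2}-\frac{1}{\rho^*(\beta)}$ to change only once; the paper proves this in Lemma \ref{lem: mean sc indefinite} by a short contradiction argument from the fixed-point equation, and the same argument works verbatim in your coordinates, so this is easily repaired.

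The genuine gap is in the global step. You assert that uniqueness of the global maximizer for $\beta<\beta_c$, and the exclusion of competing maximizers away from the symmetric branch, ``follows from strict convexity of $I(\cdot\vert\al)$ and boundedness of the interaction term.'' This is not an argument: the interaction $-\beta\nu(1)\nu(-1)$ has indefinite Hessian, so $F_\beta$ is not concave, and a bounded non-concave perturbation of a strictly convex rate function can produce several local maximizers, one of which could overtake the symmetric branch at some $\beta<\beta_c$ and render the transition first order. Your Landau expansion with $B>0$ only controls a neighborhood of $(\rho^*(\beta_c),0)$ and says nothing about distant critical points, nor about $\beta$ far from $\beta_c$. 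This global control is precisely where the paper does its real work: Theorem \ref{closedsolution} parametrizes \emph{all} stationary points by the occupied-site magnetization $m$ through an explicit function $\beta(m,\al)$, and Lemma \ref{lem: mean sc sym } proves, via a nontrivial convexity argument for $\tilde h(m)=2m+\sqrt{1-m^2}\log\frac{1-m}{1+m}$, that $\beta(\cdot,\al)$ is strictly increasing on $(0,1)$ with limit $\beta_c$ as $m\downarrow 0$. Hence no asymmetric stationary point exists for $\beta<\beta_c$ and exactly one symmetric pair appears for $\beta>\beta_c$, which is what makes the transition second order. You would need to supply an argument of comparable global strength; the one you sketch does not close this step.
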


More detailed information can be obtained as follows.  
Let us parametrize the empirical spin distribution $\nu$ via two real coordinates $(x,m)\in [0,1]\times[-1,1]$, with the meaning of 
occupation density and  magnetization on occupied sites,  in the form
\begin{equation}\label{twofour}
\begin{split}
 \begin{pmatrix}
 \nu(-1) \\ \nu(0) \\ \nu(1) 
 \end{pmatrix}= 
 \begin{pmatrix}
 \frac{x}{2}(1-m) \\ 1-x \\ \frac{x}{2}(1+m)
 \end{pmatrix}
\end{split}.
\end{equation}	
Let us also parametrize the a priori measure  $\alpha$ via coordinates $(h,l)$, where 
$h:= \frac{1}{2} \log\left(\frac{\alpha(1)}{\alpha(-1)}\right)$ is a magnetic field-type variable describing the asymmetry 
of the model, 
and $l:=\log\frac{1-\alpha(0)}{\alpha(0)}$ describes a bias on occupation probabilities.  
The first step towards the closed solution of Theorem \ref{closedsolution} and which 
gives insight into the behavior of the model, 
is to rewrite the variational formula in \eqref{equi: pressure1} in the following representation  
in which a part for occupation density $x$, interacts with an Ising-type part for the magnetization $m$  
via an occupation-dependent coupling $\b x$.

\begin{lemma}\label{lem: pressure2} The pressure takes the form 
\begin{equation}\label{equi: pressure2}
\begin{split}
 &p(\beta,\alpha)=\log(\frac{1}{3}\alpha(0))+ \sup_{0\leq x,\vert m\vert\leq 1} \Bigl( \underbrace{-\frac{\beta x^2}{4}+x(l-\log(2\cosh(h)) -J(x)}_{\hbox{\small
 part for occupation density}}
 \\
& + x (\underbrace{\frac{\beta x m^2 }{4} +h m-I(m) }_{\hbox{\small Ising part at occupation-dependent coupling}}) \Bigr)
 \end{split}
 \end{equation}
  with entropies for spins and occupations given by 
 \begin{equation}\begin{split}\label{equi: entropies}
 &I(m)= \frac{1-m}{2}\log(1-m)+\frac{1+m}{2}\log(1+m) \\
 &J(x) = (1-x)\log(1-x)+ x\log{x}-x\log 2 
 \end{split}
 \end{equation}
\end{lemma}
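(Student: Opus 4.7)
The plan is to carry out a direct substitution of the two parametrizations into the variational formula \eqref{equi: pressure1} and reorganize the integrand into the claimed two-part structure. There is no analytic subtlety beyond the large deviations input already used for \eqref{equi: pressure1}; the task is essentially algebraic bookkeeping.

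For the quadratic interaction term I would use \eqref{twofour} to compute $\nu(1)\nu(-1) = \frac{x^2}{4}(1-m^2)$, so that $-\beta\nu(1)\nu(-1) = -\frac{\beta x^2}{4} + x \cdot \frac{\beta x m^2}{4}$. The first summand contributes to the occupation-density bracket, while the second, after factoring out one power of $x$, produces exactly the $\beta x m^2/4$ term inside the Ising bracket at occupation-dependent coupling.

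For the relative entropy I would first invert the $(h,l)$-parametrization to obtain closed expressions $\alpha(\pm 1) = (1-\alpha(0))\,e^{\pm h}/(2\cosh h)$, and hence $\log\alpha(\pm 1) = \log(1-\alpha(0)) \pm h - \log(2\cosh h)$. Expanding $I(\nu\vert\alpha) = \sum_\sigma \nu(\sigma)\log(\nu(\sigma)/\alpha(\sigma))$ term by term then yields four natural groupings: (i) the coefficients of $\log(1\pm m)$ in the $\sigma=\pm 1$ terms assemble via the prefactors $\nu(\pm 1)=\frac{x}{2}(1\pm m)$ into $xI(m)$; (ii) the contributions $x\log x + (1-x)\log(1-x) - x\log 2$ from the particle-vs-hole entropy combine into $J(x)$; (iii) the $\pm h$ contributions collapse via $\nu(1)-\nu(-1) = xm$ to $-hxm$; and (iv) the $\alpha(0)$-dependent pieces satisfy $-x\log(1-\alpha(0)) - (1-x)\log\alpha(0) = -xl - \log\alpha(0)$, where the definition $l = \log\frac{1-\alpha(0)}{\alpha(0)}$ enters naturally.

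Finally I would combine the two pieces $-\beta\nu(1)\nu(-1)$ and $-I(\nu\vert\alpha)$, pull the additive constant coming from the $\alpha(0)$-term outside the supremum, and regroup the remaining terms by their $m$-dependence: all terms without an $m$ form the occupation-density bracket $-\beta x^2/4 + x(l - \log(2\cosh h)) - J(x)$, while all terms involving $m$ share a common factor $x$ and become $x\bigl(\beta x m^2/4 + hm - I(m)\bigr)$, matching the stated form \eqref{equi: pressure2}. The only obstacle is careful tracking of the various $\log 2$, $\log(2\cosh h)$ and pure-constant contributions so that they land in the correct bracket; there is no further mathematical content beyond this bookkeeping.
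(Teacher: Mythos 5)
Your proof is correct, but it takes a genuinely different route from the paper's. You substitute the parametrization \eqref{twofour} together with $\alpha(\pm 1)=(1-\alpha(0))e^{\pm h}/(2\cosh h)$ directly into the variational formula \eqref{equi: pressure1} and regroup terms; the paper instead first rewrites the partition function (Lemma \ref{lem: part. mean-asym}) by splitting the Hamiltonian into a Curie--Weiss part on the occupied sites plus an occupation-density part, re-applies Sanov/Varadhan, orders the suprema over $\nu(0)$ and $\nu(1)$, and recognizes the inner supremum as the pressure of a Curie--Weiss model at the effective coupling $\beta x$ and field $h$. Your route is shorter and purely algebraic once \eqref{equi: pressure1} is granted; the paper's route buys the structural fact that conditionally on the occupied set one sees an honest Curie--Weiss model, which is precisely the observation reused later for the constrained first-layer model in the dynamical analysis. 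One point worth recording: your bookkeeping yields the additive constant $\log\alpha(0)$ rather than $\log(\frac13\alpha(0))$ as stated in \eqref{equi: pressure2}, and your constant is the correct one. Indeed, at $\beta=0$ one has $Z_{N,0,\alpha}=1$, hence $p=0$; with your constant the supremum evaluates to $-\log\alpha(0)$ (the inner sup over $m$ gives $\log\cosh h$, and the sup over $x$ is attained at $x=1-\alpha(0)$), so the identity checks, whereas the stated formula would give $p=-\log 3$. The paper itself uses $\log\alpha(0)$ when it invokes this lemma in the antiferromagnetic section, so the $\frac13$ is a slip (presumably from the $\log(3\nu)$ relative-entropy terms in the paper's proof). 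Since the discrepancy is an additive constant, it has no effect on the maximizers, which is all that is used downstream.
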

To describe the relation between the $3$-dimensional parameter set given by $\b,\al$
and the typical values of  $\nu$ on the $2$-dimensional simplex obtained as maximizers,  
we treat $m$ as an independent parameter which allows us to obtain a closed solution as follows.

\begin{theorem}\label{closedsolution} Repulsion parameter $\b>0$, 
a priori measure $\alpha=\alpha(h,l)$,  
and possible typical values $\nu=\nu(m,x)$ of the empirical distribution, 
 are related via 
 \begin{align}\label{equi: asym beta}
\beta=\beta(m,\alpha)&= \frac{2}{m}(I'(m)-h)  (1+e^{-l+\log(\cosh(h)) +\frac{1}{m}(I'(m)-h)-mI'(m)+I(m)})\\
\label{equi: asym x} x=x(m,\alpha)&=(1+e^{-l+\log(\cosh(h)) +\frac{1}{m}(I'(m)-h)-mI'(m)+I(m)})^{-1} 
 \end{align}
 for $m\neq 0$. 
 \end{theorem}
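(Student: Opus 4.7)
The strategy is to treat the representation in Lemma \ref{lem: pressure2} as a two-variable unconstrained optimization for fixed parameters and impose the first-order conditions at any maximizer in the interior, then eliminate the variable $\beta$ to get a closed form in $(m,\alpha)$. Denote the function under the sup in \eqref{equi: pressure2} by $F(x,m)$ and regroup it as
\[
F(x,m)= -\frac{\beta x^2(1-m^2)}{4} - J(x) + x\bigl(l - \log(2\cosh(h)) + hm - I(m)\bigr),
\]
so that $x>0$ and $m\in(-1,1)$ can be varied independently, with $I$ and $J$ smooth on the open interval.

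Differentiating in $m$ yields $\partial_m F = x\bigl(\frac{\beta x m}{2} + h - I'(m)\bigr)$. For any interior maximizer with $m\neq 0$ and $x>0$ this gives the clean identity $\beta x = \frac{2(I'(m)-h)}{m}$, which is already \eqref{equi: asym beta} modulo substitution of $x$. This is the easy half: the $m$-stationarity equation is essentially the mean-field fixed point equation of a Curie--Weiss Ising model at effective coupling $\beta x$ and field $h$, which is consistent with the interpretation in Lemma \ref{lem: pressure2}.

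The $x$-stationarity requires more bookkeeping. A direct computation gives $J'(x) = \log\frac{x}{2(1-x)}$, and therefore
\[
0 = \partial_x F = -\frac{\beta x(1-m^2)}{2} - \log\frac{x}{2(1-x)} + l - \log(2\cosh(h)) + hm - I(m).
\]
Now I would substitute $\beta x = 2(I'(m)-h)/m$ from the first equation, and split $(1-m^2)(I'(m)-h)/m = (I'(m)-h)/m - m(I'(m) - h)$. The $\pm mh$ contributions cancel, and one isolates
\[
\log\frac{x}{2(1-x)} = -\frac{I'(m)-h}{m} + mI'(m) - I(m) + l - \log(2\cosh(h)).
\]
Exponentiating and moving the $\log 2$ across to combine with $\log(2\cosh(h)) - \log 2 = \log\cosh(h)$ gives $\frac{1-x}{x} = e^{-l+\log\cosh(h)+\frac{1}{m}(I'(m)-h)-mI'(m)+I(m)}$, i.e.\ \eqref{equi: asym x}. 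Substituting this $x$ back into $\beta x = 2(I'(m)-h)/m$ produces \eqref{equi: asym beta}.

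The only genuinely nontrivial step is the algebraic cancellation after substituting the $m$-equation into the $x$-equation, in particular the observation that the $-mh$ coming out of $(1-m^2)(I'(m)-h)/m$ cancels against the $+hm$ already present; this is what makes the final expression factor as a clean logistic in the exponent and produces a closed-form solution rather than a coupled system. The rest is straightforward calculus and the observation that these critical-point relations, parametrized by $m\in(-1,1)\setminus\{0\}$, sweep out all possible typical empirical distributions $\nu(m,x)$ as $\beta$ and $\alpha=\alpha(h,l)$ vary, as claimed.
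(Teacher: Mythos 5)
Your proposal is correct and takes essentially the same route as the paper: both derive the two stationarity equations $\partial_m F=0$ and $\partial_x F=0$ from the representation in Lemma \ref{lem: pressure2}, read off $\beta x = \tfrac{2}{m}(I'(m)-h)$ from the $m$-equation, substitute into the $x$-equation to solve for $x$ via the inverse of $J'$, and then recover $\beta$. You merely spell out the algebraic cancellations (the $\pm mh$ terms and the $\log 2$ bookkeeping) that the paper leaves implicit.
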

Note that \eqref{equi: asym beta} describes all solutions to the stationarity 
	equation to carry out the maximization in \eqref{equi: pressure2}, and includes unstable 
	and metastable solutions, hence it describes the possible typical values of the 
	empirical distribution. 
	We can derive for instance critical exponents from this parametrization, see Theorem \ref{thm: critical beta} and \ref{thm: critical mag.}.

\subsection{Dynamical Gibbs-non Gibbs transitions, time-evolution of bad empirical measures}

%

Let us come to the time-evolution. 
We consider a stochastic time-evolution which exchanges $+$ and $-$ according 
to a temporal rate-$1$ Poisson process, and fixes the holes, 
independently at each site $i$.   
The corresponding single-site transition kernel which gives the probability 
to go from $a$ to $b$ in time $t$ at a site $i$ reads 
\begin{align}\label{trans}
p_t(a,b)= \frac{1}{2}(1+e^{-2t})\mathds{1}_{a=b\neq0}+\frac{1}{2}(1-e^{-2t})\mathds{1}_{a b =-1} 
+\mathds{1}_{a=b=0}.
\end{align}
for $a,b \in \{-1,0,1\}$ and $t>0$. 
We write  $\omega_{[1,N]}$ for a configuration at time $0$ and $\eta_{[1,N]}$ for a configuration
at time $t$.
The time-evolved measure on $N$ sites is defined by
\begin{align*}
\mu_{\beta, \al,t, N}(\eta _{[1,N]}) :=\sum_{\omega_{[1,N]} \in
	\Omega_N}\mu_{\beta, N}(\omega_{[1,N]}) \prod_{i=1}^N p_t(\omega_i,\eta_i)
\end{align*}

Then our main result on the dynamical Gibbs-non Gibbs transitions is as follows. 

\begin{theorem}\label{thm: time evolved} Consider the time-evolved Curie-Weiss Widom-Rowlinson model 
	at symmetric a priori measure $\al$, i.e. for which $\a(+)=\a(-)>0$, repulsion parameter $\b>0$
	and time $t>0$. 
	Then the following holds. 
	\begin{itemize}
		\item	For $\beta\leq2$ the time-evolved model is sequentially Gibbs for all $t>0$. 
		\item For $2<\beta \leq3$ the time-evolved model 
		is sequentially Gibbs iff $t<-\frac{1}{4}\log(1-\frac{2}{\beta})$.
		For $t\geq -\frac{1}{4}\log(1-\frac{2}{\beta})$ the set of bad empirical measures 
		is a line which grows with $t$. 
		\item For $\b >3$ there are three transition times $0<t_1(\b)<t_2(\b)<t_3=\frac{\log 3 }{4}$ such that the following holds:
		\begin{itemize}
			\item For $0\leq t<t_1(\beta)$ the model is sequentially Gibbs.
			\item At $t=t_1(\beta)$ 
			the model loses the sequential Gibbs property \\and 
			a pair of bad measures appears. 
			\item For $t_1(\b)<t<t_2(\b)$ the set of bad measures 
			consists of two disconnected curves.\\ (fig. \ref{fig: over t1}.)
			\item At $t=t_2(\b)$ the two curves touch. 
			\item 
			For $t_2(\b)<t<t_3$ 
			the set of bad empirical measures is Y-shaped
			(fig. \ref{fig: close t2},\ref{fig: close t0}).  
			\item 
			For $t\geq t_3$ 
			the set of bad empirical measures is a line which is growing with time.  
		\end{itemize}
	\end{itemize}
\end{theorem}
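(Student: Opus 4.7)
The plan is a mean-field Bayesian reduction combined with the Curie--Weiss Ising analysis of \cite{kuelske-le07}. By the standard argument for mean-field models, for sequences of conditionings $\eta_{[2,N]}$ whose empirical measures converge to $\alpha_f$, the single-site conditional probability $\mu_{\beta,\alpha,t,N}(\eta_1\,|\,\eta_{[2,N]})$ is asymptotically controlled by the minimizers of the conditional rate function
\begin{equation*}
G_{t,\alpha_f}(\nu)=\beta\,\nu(1)\nu(-1)+I(\nu\,|\,\alpha)+C_t(\alpha_f\,|\,\nu),
\end{equation*}
where $C_t(\alpha_f\,|\,\nu)$ is the large-deviation transport cost of pushing empirical $\nu$ at time $0$ to $\alpha_f$ at time $t$ under the independent site kernel $p_t$; a measure $\alpha_f$ is bad iff $G_{t,\alpha_f}$ has at least two minimizers. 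The crucial reduction is that $p_t$ preserves the hole coordinate, so feasible $\nu$ obey $\nu(0)=\alpha_f(0)$, and with the parametrization $\nu(\pm 1)=\tfrac{x}{2}(1\pm m_0)$, $\alpha_f(\pm 1)=\tfrac{x}{2}(1\pm m_t)$ and $x:=1-\alpha_f(0)$, a direct computation using the symmetry of $\alpha$ factorizes
\begin{equation*}
G_{t,\alpha_f}(\nu)=(\text{$m_0$-independent})+x\,\Bigl[-\tfrac{\beta x}{4}m_0^2+I(m_0)+C_t^{\mathrm{Is}}(m_t\,|\,m_0)\Bigr],
\end{equation*}
where the bracket is exactly the Külske--Le Ny conditional rate function of a symmetric Curie--Weiss Ising model at effective inverse temperature $\beta_I(x):=\beta x/2$ conditioned on time-$t$ magnetization $m_t$. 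Consequently the WR bad set at time $t$ is the image in the simplex of $\{(x,m_t):x\in(0,1],\ m_t\in B(\beta x/2,t)\}$, where $B(\beta_I,t)\subset[-1,1]$ is the set of bad Ising conditional magnetizations.

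The structure of $B(\beta_I,t)$ is known from \cite{kuelske-le07}: empty for $\beta_I\le 1$; for $1<\beta_I\le 3/2$, empty when $t<t_I^\ast(\beta_I)$ and equal to $\{0\}$ when $t\ge t_I^\ast(\beta_I)$, with $t_I^\ast(\beta_I)=-\tfrac14\log(1-1/\beta_I)$ decreasing in $\beta_I$; and for $\beta_I>3/2$ a richer cusp/butterfly-type pattern (\cite{poston-stewart-book}) in which non-zero bad magnetizations $\pm m_t^\ast$ exist in an intermediate time window. Sweeping $x\in(0,1]$ yields the first two cases of the theorem. For $\beta\le 2$ one has $\beta_I(x)\le 1$ uniformly, so the WR bad set is empty at every $t$. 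For $2<\beta\le 3$ only $m_t=0$ is ever Ising-bad; since $t_I^\ast$ is smallest where $\beta_I$ is largest (at $x=1$), the first WR transition occurs at $t_I^\ast(\beta/2)=-\tfrac14\log(1-2/\beta)$, and for $t$ above this threshold the admissible $x$'s fill $[x_c(t),1]$ with $x_c(t)=(2/\beta)/(1-e^{-4t})$, producing the growing straight line $\alpha_f(1)=\alpha_f(-1)$ in the simplex.

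For $\beta>3$ the effective parameter $\beta_I(x)$ exceeds $3/2$ on $x\in(3/\beta,1]$, so non-zero Ising-bad branches $\pm m_t^\ast(x,t)$ enter the picture. The three WR transition times then come from the geometry of the one-dimensional family of bad pairs $(x,\pm m_t^\ast(x,t))$ in the simplex: $t_1(\beta)$ is the first $t$ for which some $x$ admits a non-zero Ising-bad $m_t$, giving birth to two symmetric disconnected arcs via a cusp; $t_2(\beta)$ is the $t$ at which these arcs meet the already-bad $m_t=0$ line (which for $\beta>3$ is active from the earlier time $-\tfrac14\log(1-2/\beta)<t_3$), forming the Y; and $t_3=\tfrac14\log 3$ emerges as the universal time at which $x_c(t)=3/\beta$, equivalently the Ising transition time $t_I^\ast(3/2)$ at the critical value $\beta_I=3/2$, after which the non-zero branches have vanished for every admissible $x$ and only the growing line survives. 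The main obstacle is precisely this global bifurcation analysis for $\beta>3$: rather than an isolated cusp/butterfly unfolding at a fixed $\beta_I$, one must follow the Ising conditional stationarity equation along the one-parameter family $x\mapsto\beta_I(x)$ and verify that the topological transitions of the WR bad set (two arcs, then Y, then line) occur at three unique, explicit times with the claimed geometry shown in the figures.
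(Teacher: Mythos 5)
Your proposal is correct and follows essentially the same route as the paper: both exploit that the dynamics conserves holes to reduce the conditional (first-layer) problem to a symmetric Curie--Weiss Ising model on the occupied sites at effective inverse temperature $\tilde\beta=\tfrac{\beta}{2}\al_f(\{-1,1\})$ conditioned on the final magnetization $\al_f^*$, and then import the classification of bad magnetizations from \cite{kuelske-le07}, sweeping the occupation density $x$ to assemble the arcs, the Y-shape, and the growing line with the same transition times $t_1(\b),t_2(\b),t_3=\tfrac{\log 3}{4}$. The paper implements this via the constrained first-layer model and a Hubbard--Stratonovich argument rather than your rate-function-with-transport-cost formulation, but these are equivalent formalizations of the identical reduction.
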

\begin{figure}
	\subfloat[$t<t_1(\beta)$\label{fig: below t1}]{\includegraphics[trim = 0mm 40mm 0mm 0mm  , clip,width=0.3\textwidth]{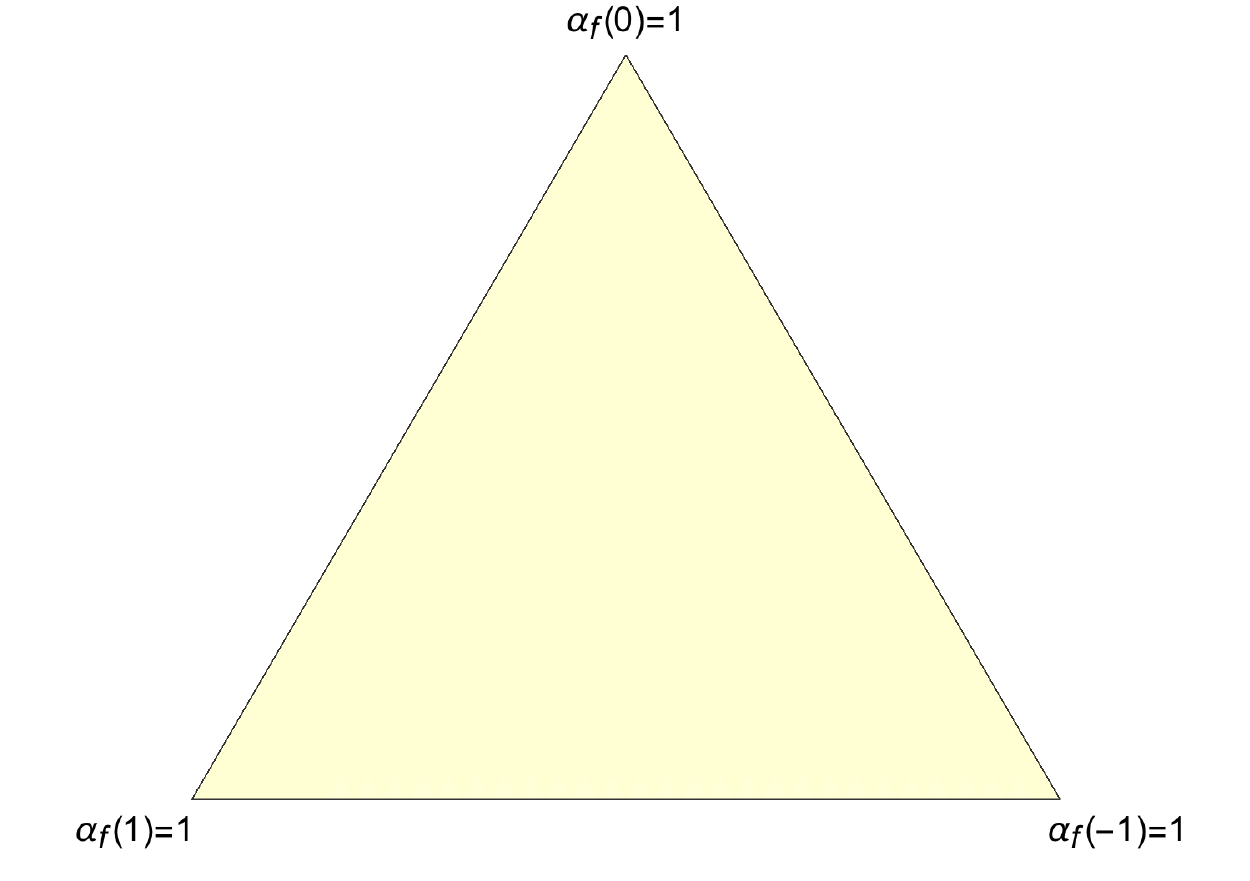}}
	\hfill %
	\subfloat[$t_1(\beta)\leq t< t_2(\beta)$\label{fig: over t1}]{\includegraphics[trim = 0mm 40mm 0mm 0mm  , clip,width=0.3\textwidth]{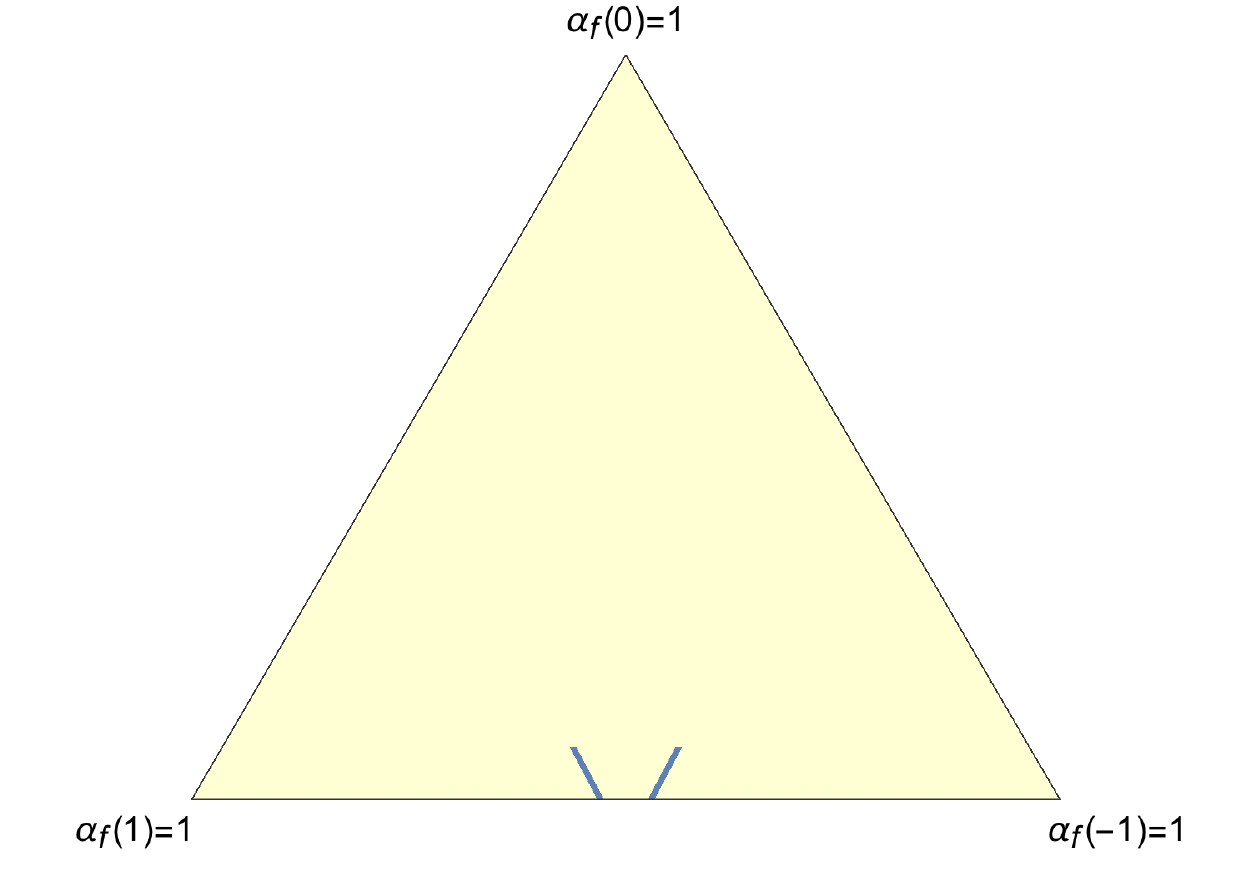}}
	\hfill 
	\subfloat[$t_2(\beta)\leq t<< t_3$\label{fig: close t2}]{\includegraphics[trim = 0mm 40mm 0mm 0mm  , clip,width=0.3\textwidth]{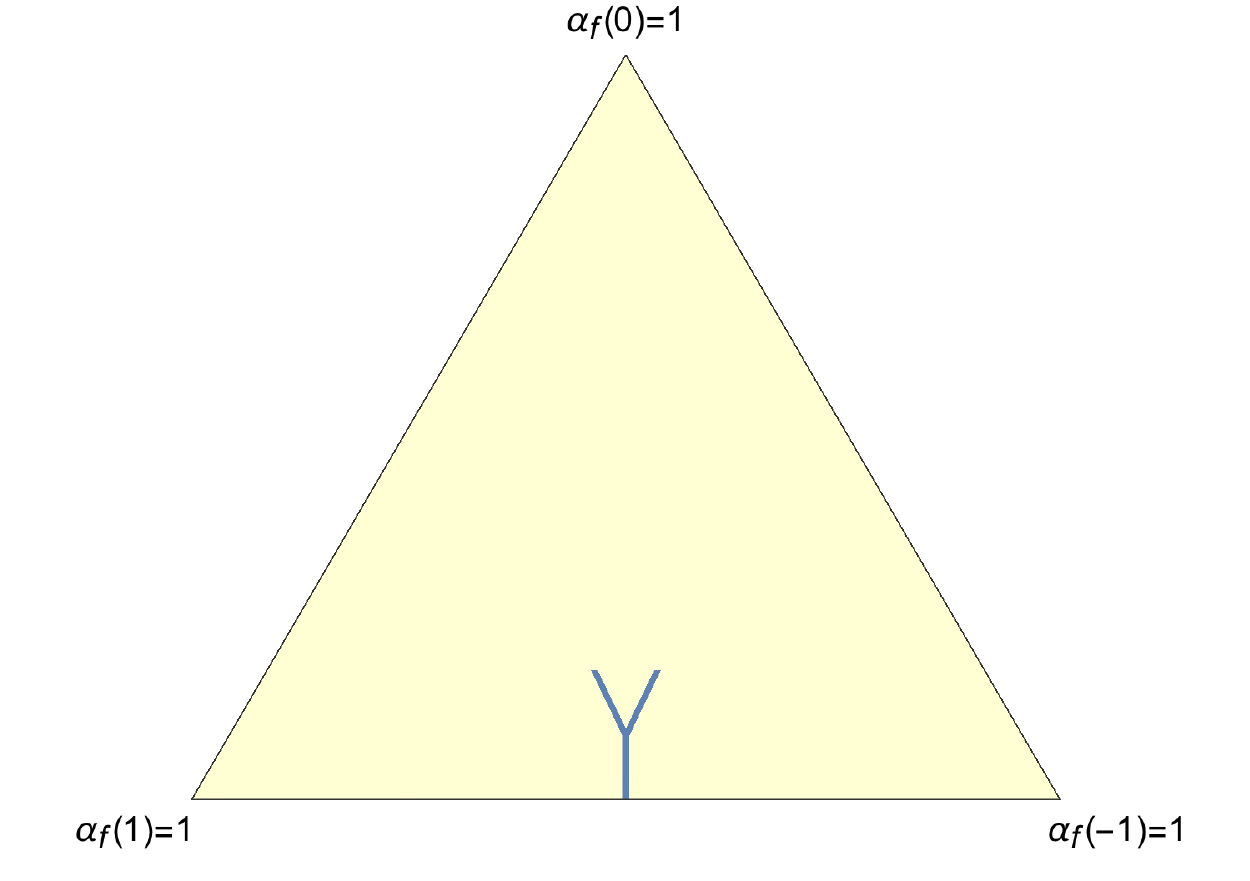}}
	\hfill
	\subfloat[$t_2(\beta)<< t< t_3$\label{fig: close t0}]{\includegraphics[trim = 0mm 40mm 0mm 0mm  , clip,width=0.3\textwidth]{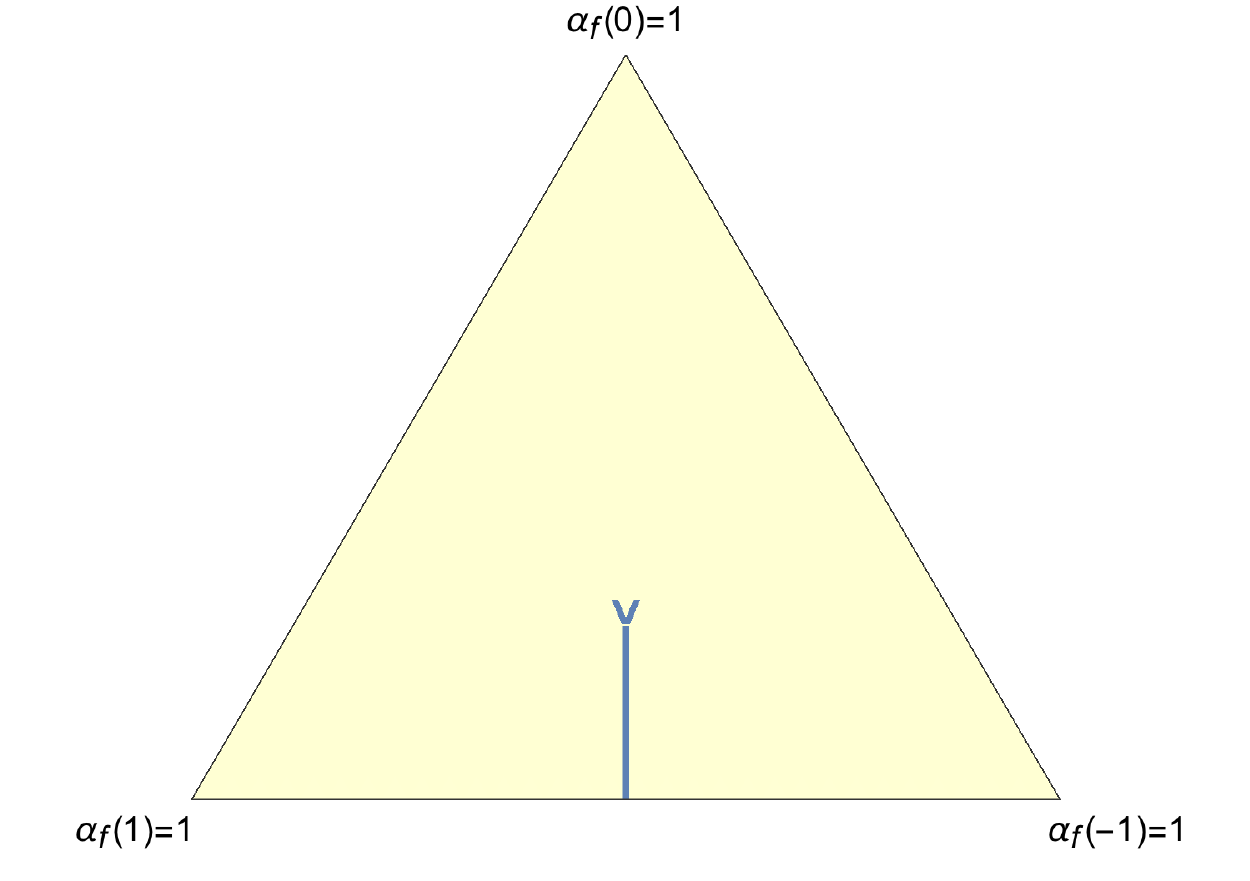}}
	\hfill 
	\subfloat[$t= t_{3}$\label{fig: equal t0}]{\includegraphics[trim = 0mm 40mm 0mm 0mm  , clip,width=0.3\textwidth]{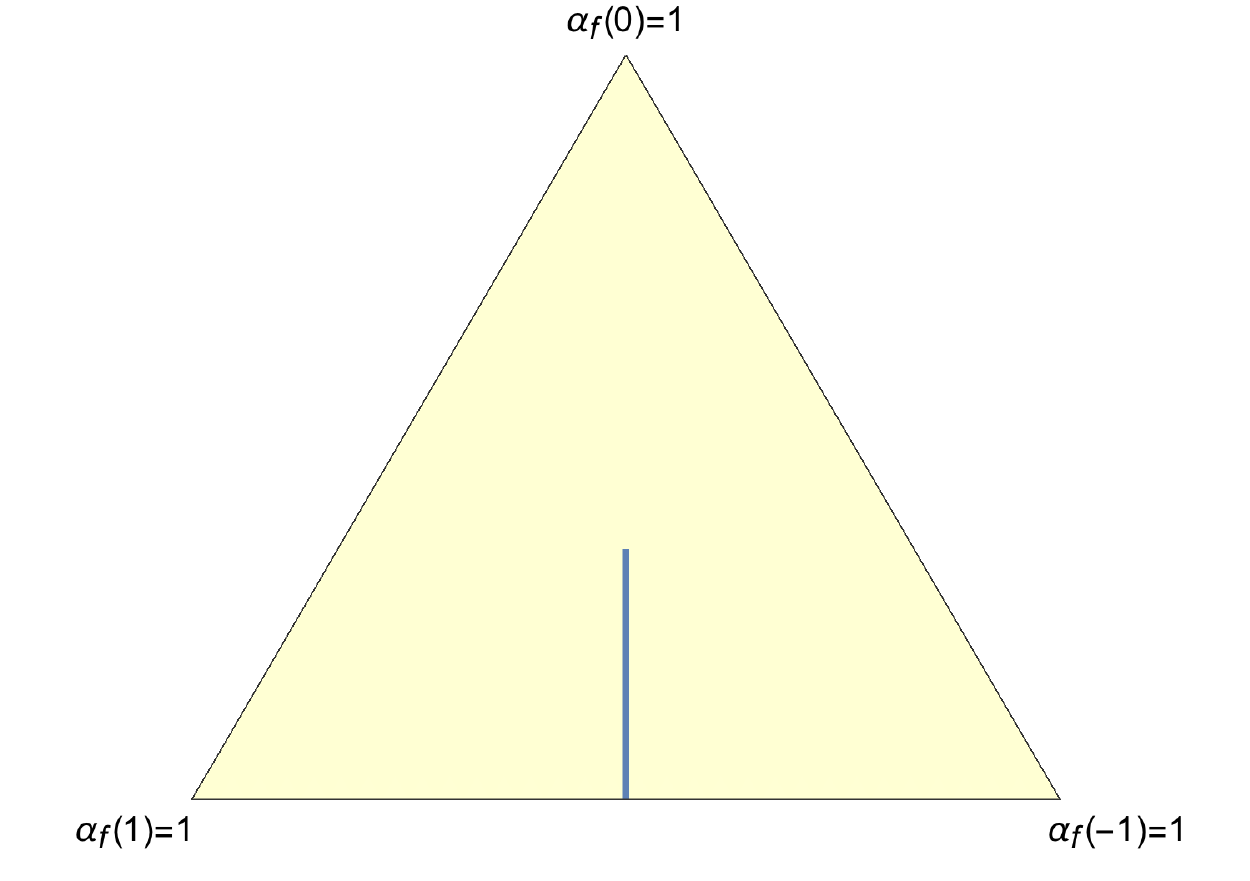}}
	\hfill 
	\subfloat[$t>>t_3$\label{fig: bigger t0}]{\includegraphics[trim = 0mm 40mm 0mm 0mm  , clip,width=0.3\textwidth]{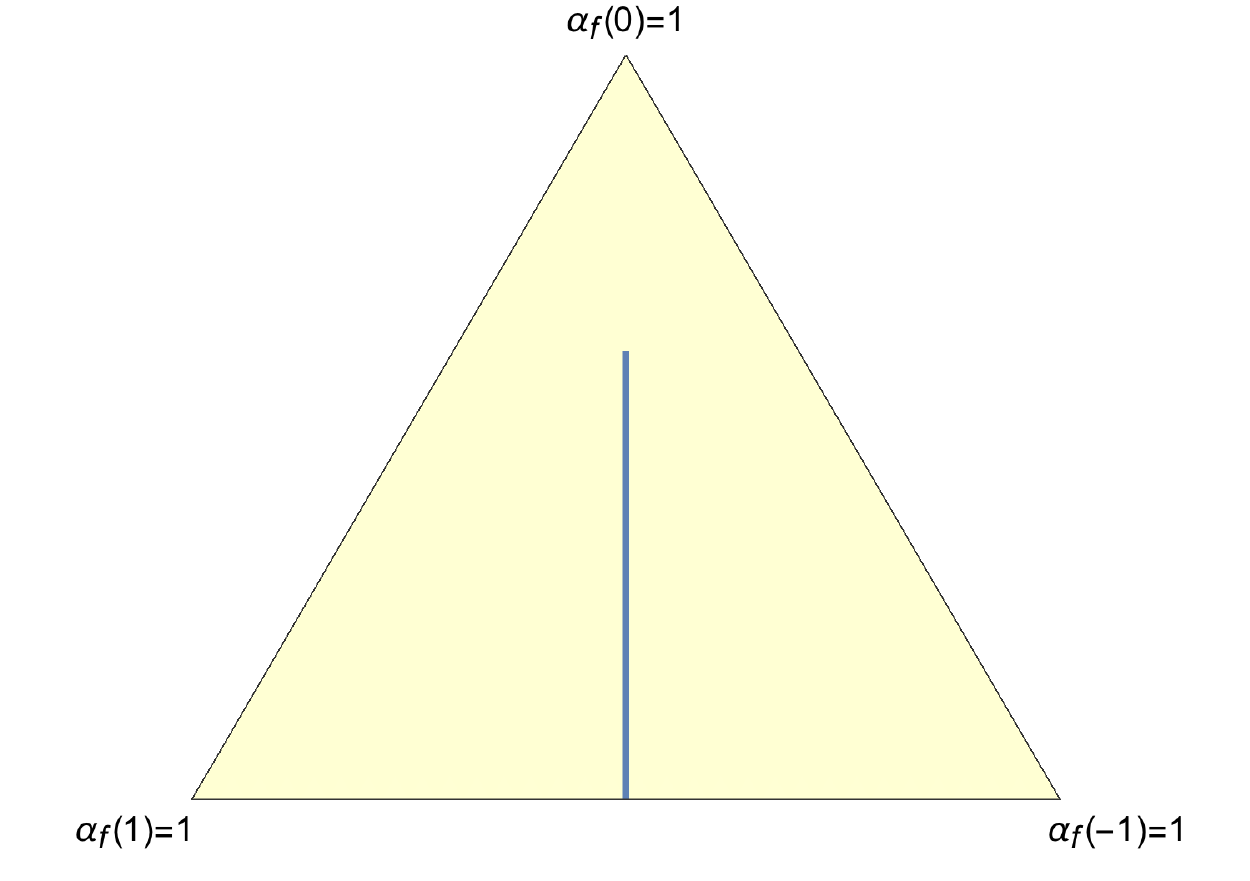}}
	\caption{Sets of bad empirical measures for different times and $\beta=5$}
\end{figure}
The above pictures describe the large $\b$-situation. For intermediate $2<\b\leq 3$, the bad empirical measures 
are described by a growing line, and qualitatively look like Figures \ref{fig: below t1}, \ref{fig: equal t0}, and \ref{fig: bigger t0}.
The transitions we just described do not depend on the a priori measure $\al$ 
as long as we assume that it is symmetric (which seems unusual but 
appears as a consequence of the nature of the 
dynamics which fixes the number of holes).

As the critical inverse temperature 
$\beta_c= 2+e \frac{\alpha(0)}{\alpha(1)}>2$ is always strictly bigger than the 
threshold $2$ for non-Gibbsian behavior, there is always non-Gibbsian behavior 
in the small-repulsion ("high-temperature") regime of the initial model. 

In the proof section we will present more information on the specification 
kernel of the time-evolved model $\g_{\b,\al,t}(\cdot | \al_f)$ in the parameter 
region of sequential Gibbsianness, see Lemma \ref{lem: limit exists}. 

We conclude our list of main results with a remark on typicality vs atypicality 
of bad empirical measures, or: Almost sure Gibbsianness.  
In analogy to the lattice situation we make the following definition.  
\begin{definition}\label{asq}
	We call a sequence of exchangeable measures $\mu_N\in \mathcal{M}_1(\O_N)$
	{\em almost surely sequentially Gibbs} iff there exists an $\e>0$ such that 
	$$\lim_{N\uparrow \infty}\mu_N\left( d\bigl(\frac{1}{N}\sum_{i=1}^N \d_{\o_i},  B\bigr)\geq \e\right)=1$$ 
	where $\o_{[1,N]}$ are distributed according to $\mu_N$,   
	$B$ is the set of bad empirical measures as in Definition \ref{def: seqG}, 
	and $d$ is the standard metric on  $\mathcal{M}_1(\{-1,0,1\})$. 
\end{definition}

In many examples,
the distribution of the empirical measures under
$\mu_N$ will even satisfy an LDP with rate $N$, and some rate function $\nu \mapsto K(\nu)$,  as $N$ tends to
infinity. In that case \ref{asq} is ensured by $\inf_{\nu, d(\nu,B)\leq \e}K(\nu)>0$.

With this definition we have in the case of our time-evolution the following proposition. 

\begin{proposition} The time-evolved model is almost-surely sequentially Gibbs, at all 
	parameters of the initial model $\b>0, \al$ and all times $t\in [0,\infty)$. 
\end{proposition}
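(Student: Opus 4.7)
The plan is to derive the proposition from a joint large deviation principle (LDP) for the time-zero and time-$t$ configurations, contract it to the empirical measure of $\eta$, identify the zero-set $T_t$ of the resulting rate function, and verify that $T_t$ is disjoint from the bad set $B$ of Theorem \ref{thm: time evolved}. The joint law of $(\o,\eta)$ under the dynamics is $\mu_{\b,\al,N}(\o)\prod_i p_t(\o_i,\eta_i)$, so the empirical distribution of pairs $\frac{1}{N}\sum_i\d_{(\o_i,\eta_i)}$ satisfies a Sanov-type LDP with rate function
\begin{equation*}
\Phi_t(\pi)=\b\,\pi_\o(1)\pi_\o(-1)+I(\pi\mid\al\otimes p_t)-C_t,
\end{equation*}
where $\pi_\o$ is the first marginal, $\al\otimes p_t$ is the joint reference probability measure on $E\times E$ with density $\al(\o)p_t(\o,\eta)$, and $C_t$ is the normalizer ensuring $\inf\Phi_t=0$. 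Contracting to the $\eta$-marginal gives the rate function $K_t(\nu)=\inf_{\pi:\pi_\eta=\nu}\Phi_t(\pi)$ for $\frac{1}{N}\sum_i\d_{\eta_i}$ under $\mu_{\b,\al,t,N}$.

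Second, I would identify $T_t:=\{\nu:K_t(\nu)=0\}$ explicitly. By the chain rule $I(\pi\mid\al\otimes p_t)=I(\pi_\o\mid\al)+\sum_\o\pi_\o(\o)\,I(\pi(\cdot\mid\o)\mid p_t(\o,\cdot))$, so any minimizer of $\Phi_t$ takes the product form $\pi^*=\pi^*_\o\otimes p_t$, with $\pi^*_\o$ a typical static empirical measure described by Theorem \ref{closedsolution}. Hence $\pi^*_\eta=\pi^*_\o\,p_t$. Since $p_t$ fixes the hole density and contracts the magnetization on occupied sites by the factor $e^{-2t}$, for symmetric $\al$ the set $T_t$ consists of the single point with $m=0$, $x=x_*(\b,\al)$ when $\b\leq\b_c$, and of the two points with $m=\pm m_*(\b,\al)\,e^{-2t}$, $x=x_*(\b,\al)$ when $\b>\b_c$; for asymmetric $\al$ the set $T_t$ is similarly a finite subset of the simplex obtained by pushing forward the typical static measures.

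Third, and this is the main obstacle, I would verify $T_t\cap B=\emptyset$ for every $\b,\al,t$. The set $B$ is characterized in the proof of Theorem \ref{thm: time evolved} as those $\al_f$ at which the constrained optimization producing the limit of $\mu_{\b,\al,t,N}(\o_1\mid\eta_{[2,N]})$ has multiple minimizers in $\o_1$. For $\nu\in T_t$ the LDP forces the backward reconstruction of $\o$ given an $\eta$ whose empirical measure is $\nu$ to concentrate on the unique $\pi^*_\o$ of the previous step, so the conditional problem has a single minimizer and $\nu$ lies in the sequentially Gibbsian regime of the specification kernel. The concrete check amounts to comparing the explicit pushforward points $(\pm m_*\,e^{-2t},x_*)$ with the explicit bad curves of Theorem \ref{thm: time evolved}, using monotonicity in $t$ and the Butterfly bifurcation structure near the transition times $t_1(\b),t_2(\b),t_3$ to rule out coincidences uniformly in all parameters. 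Once disjointness is established, compactness of the simplex produces some $\e>0$ with $d(T_t,B)>2\e$, and the LDP upper bound on the closed set $\{\nu:d(\nu,T_t)\geq\e\}$, whose $K_t$-infimum is strictly positive, gives $\lim_N\mu_{\b,\al,t,N}(d(N^{-1}\sum_i\d_{\eta_i},T_t)\geq\e)=0$, whence the almost-sure sequential Gibbsianness of Definition \ref{asq} follows.
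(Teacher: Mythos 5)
Your overall skeleton is the same as the paper's: the typical empirical measures of the time-evolved model are the pushforwards under $p_t$ of the static maximizers (the paper records exactly your product-form/contraction conclusion as \eqref{equi: time evo mini}: hole density preserved, magnetization contracted by $e^{-2t}$), and once disjointness from $B$ is known, compactness plus the exponential concentration of the LDP finishes the argument. The gap is in your third step, which is where all the substance of the paper's proof lives. First, the sentence ``for $\nu\in T_t$ the LDP forces the backward reconstruction of $\o$ to concentrate on the unique $\pi^*_\o$, so the conditional problem has a single minimizer'' is not a valid implication: badness of $\al_f$ is decided by the \emph{quenched} constrained first-layer model $\mu_{\b,t,N}[\eta_{[2,N]}]$ for a fixed conditioning with empirical measure $\al_f$, i.e.\ by uniqueness of the minimizer of $\phi_{\tilde\b,t,\al_f^*}$, and this is not controlled by the annealed joint LDP. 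That typical measures avoid the bad set is precisely the nontrivial claim; the paper explicitly notes that the semi-concavity principle that would make this automatic for Ising systems does not directly apply here because of the multivalued spins and the degenerate dynamics, which is why a model-specific verification is given.

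Second, the ``concrete check'' you defer is never performed, and it is not a routine comparison. For the symmetric part one must show that every symmetric global minimizer satisfies $\nu(0)\geq 1-\tfrac{2}{\b}$ while any symmetric bad measure requires $\al_f(0)<1-\tfrac{2}{\b}$ (effective inverse temperature $\tilde\b>1$); this matters because for $2<\b\leq\b_c$ the unique typical point and the growing bad line both lie on the symmetry axis $m=0$, so they can only be separated through the hole density. For the asymmetric part the paper maps the bifurcation curves $C_1,C_2$ of \cite{kuelske-le07} back to the simplex, intersects them with the curve $\mathcal M_t$ of asymmetric minimizers (parametrized over all $\al(0)$ at once), and reduces disjointness to the one-variable inequality \eqref{equi: strange function} on $[0,t_3]$, which is ultimately settled by monotonicity plus a numerical check. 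Without carrying out these two computations, your proposal establishes the framework but not the proposition.
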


This type of result  follows for non-degenerate (but possibly interacting) dynamics for Ising-systems 
by the principle of preservation of semi-concavity (see \cite[Theorem 2.11.]{kraaij-redig-zuijlen17}) 
In our present case where we have 
multivalued spins and degenerate dynamics \eqref{trans}
we include a proof for our specific model (see Section \ref{sec: atypi}). 
The situation is illustrated with the following plots. 
\begin{figure}[ht]\label{fig: atypic}\centering
\subfloat[$t=0.25,\b=2.8$\label{fig: atypic 2.8}]{\includegraphics[trim = 0mm 40mm 0mm 0mm  , clip,width=0.4\textwidth]{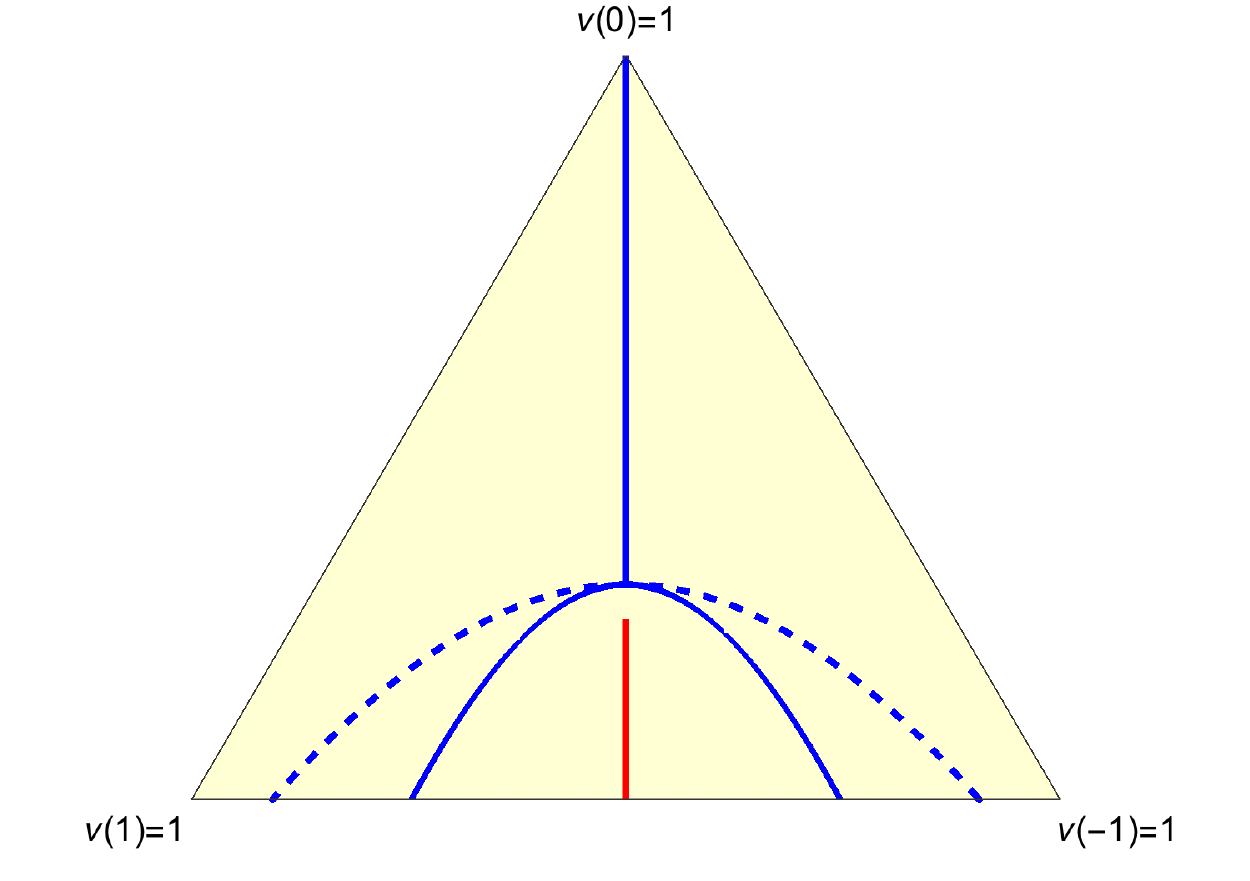}}
\subfloat[$t=0.111, \beta=4$\label{fig: atypic 5}]{\includegraphics[trim = 0mm 40mm 0mm 0mm  , clip,width=0.4\textwidth]{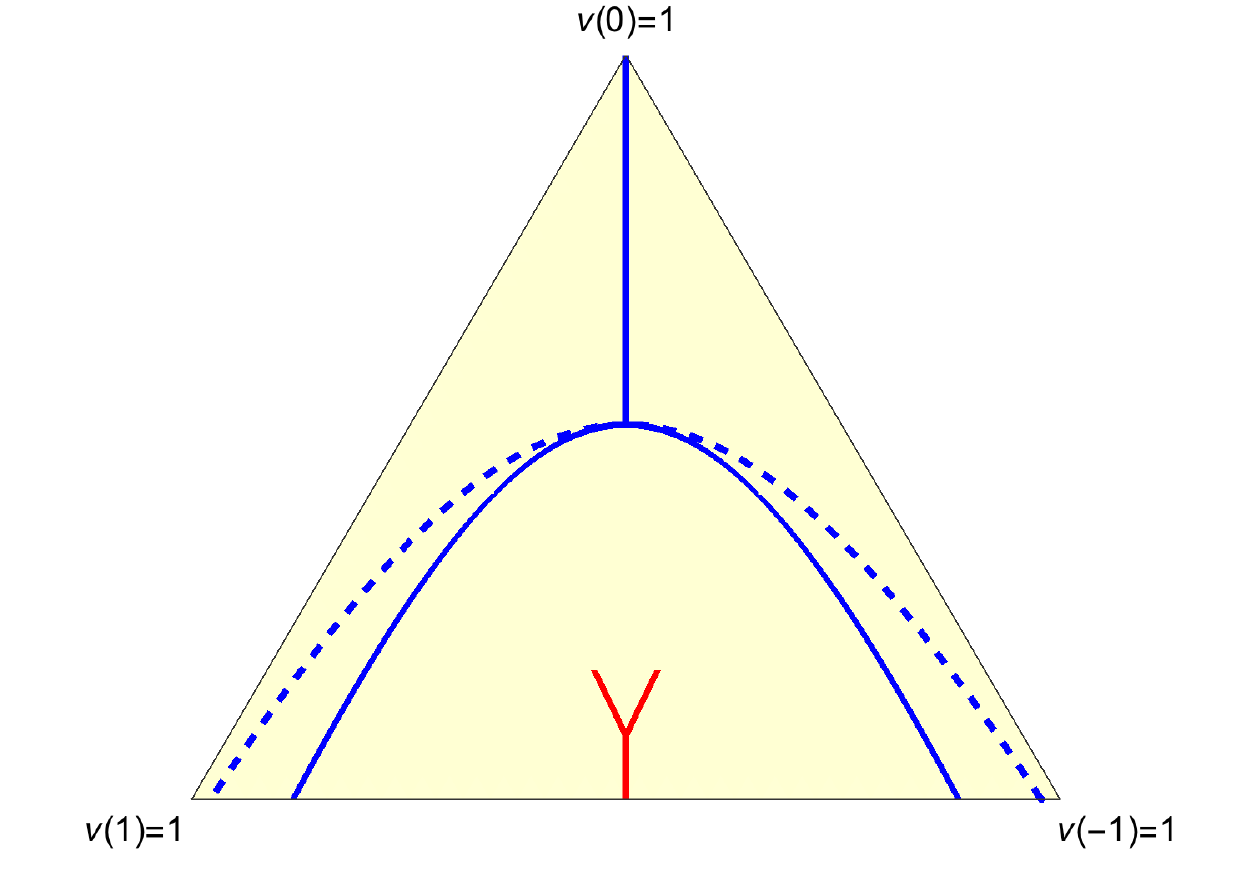}}\caption{
Bad empirical measures (red) and typical empirical measures at time $t$ (solid blue)}
\end{figure}
{The dashed blue line describes the locations of the asymmetric maximizers of \eqref{equi: pressure1} parametrized by $\al(0)$.  
Hence, all possible typical empirical measures  of the initial model for any possible hole density $\al(0)$ (including high values such that 
there is no broken symmetry),  lie above the dashed blue line. 
The solid blue line is the image of the dashed blue line after time-evolution (which contracts into the direction of the axis of symmetry). 
It therefore describes typical empirical measures of the time-evolved model.   
We will prove that the solid blue line will not intersect with the red set which is the set of bad empirical measures at time $t$.}

\section{The static model}

\subsection{Proofs for the main results}\label{sec: proofs static}

For the large deviation analysis we first consider only the symmetric model. This approach will not be enough to prove the whole Theorem \ref{thm: second order} but it will already give us the value of $\beta_c$. The first step is to prove \eqref{equi: pressure1}.
\begin{lemma}
	Let $\al\in\mathcal{M}_1(E)$ and $\beta>0$. Then the pressure $p$ of the Curie-Weiss WR model is equal to
	\begin{align}\label{equi: sup sym sc}
	p(\beta,\al)= \sup_{\nu\in \mathcal{M}_1(E)} (-H_\beta(\nu)- I(\nu\vert \al))
	\end{align}
	where $H_{\beta}(\nu)= \beta\nu(1)\nu(-1)$ and $I(\cdot\vert \al)$ is the relative entropy with respect to $\al$.
\end{lemma}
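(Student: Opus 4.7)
The plan is to recognize this as a direct application of Sanov's theorem followed by Varadhan's lemma, where the only nonroutine ingredient is rewriting the pair-interaction Hamiltonian as a continuous bounded function of the empirical measure on $E = \{-1, 0, 1\}$.

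First, I would denote the empirical measure $L_N(\omega) := \frac{1}{N}\sum_{i=1}^N \delta_{\omega_i} \in \mathcal{M}_1(E)$ and rewrite the exponent in \eqref{WR}. Since $\omega_i\omega_j = -1$ is impossible for $i=j$, counting ordered pairs gives
\begin{equation*}
\sum_{1\leq i,j\leq N}\mathds{1}(\omega_i\omega_j = -1) = 2\,|\{i:\omega_i=1\}|\cdot|\{j:\omega_j=-1\}| = 2 N^2 L_N(1) L_N(-1),
\end{equation*}
so that $\frac{\beta}{2N}\sum_{i,j}\mathds{1}(\omega_i\omega_j=-1) = N\beta L_N(1)L_N(-1) = N\, H_\beta(L_N(\omega))$. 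The finite-volume measure can therefore be recast as the exponential tilt
\begin{equation*}
\mu_{N,\beta,\alpha}(\omega_{[1,N]}) = \frac{1}{Z_{N,\beta,\alpha}}\, e^{-N H_\beta(L_N(\omega))} \prod_{j=1}^N \alpha(\omega_j),
\end{equation*}
and the partition function becomes $Z_{N,\beta,\alpha} = \mathbb{E}_{\alpha^{\otimes N}}[e^{-N H_\beta(L_N)}]$.

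Second, I would invoke Sanov's theorem: under the i.i.d. product measure $\alpha^{\otimes N}$, the sequence $L_N$ satisfies a large deviation principle on the (compact) two-dimensional simplex $\mathcal{M}_1(E)$ at speed $N$ with good rate function $I(\cdot\,\vert\,\alpha)$. The map $H_\beta: \nu \mapsto \beta\nu(1)\nu(-1)$ is continuous and bounded on $\mathcal{M}_1(E)$, so the Varadhan tail condition is trivially satisfied. Applying Varadhan's lemma then yields
\begin{equation*}
p(\beta,\alpha) = \lim_{N\to\infty}\frac{1}{N}\log Z_{N,\beta,\alpha} = \sup_{\nu \in \mathcal{M}_1(E)}\bigl(-H_\beta(\nu) - I(\nu\vert \alpha)\bigr),
\end{equation*}
which is the claimed identity \eqref{equi: sup sym sc}.

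There is no real obstacle here: the only point demanding any care is the combinatorial identity turning the double sum over indicator functions into the product $L_N(1)L_N(-1)$, and the observation that including the diagonal terms $i=j$ costs nothing. Compactness of the state space makes all continuity, boundedness, and exponential tightness hypotheses automatic, so the standard Sanov--Varadhan machinery applies without modification.
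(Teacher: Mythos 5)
Your proposal is correct and follows essentially the same route as the paper: rewrite the Hamiltonian as $N\beta L_N(1)L_N(-1)$ via the empirical measure, then apply Sanov's theorem and Varadhan's lemma, with compactness of $\mathcal{M}_1(E)$ and boundedness of $H_\beta$ making the hypotheses automatic. No gaps.
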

\begin{proof}
	The Hamiltonian of our model can be rewritten in terms of the empirical distribution 
	$L_N^{k} = \sum_{i=1}^N \mathds{1}(\omega_i=k)$ for $k\in E$, which 
	leads to a reformulation of the pressure
	\begin{align*}
	p(\beta,\al)&= \lim_{N\rightarrow \infty} \frac{1}{N} \log\left(\int_{\Omega_N} e^{-N  \beta L^1_{N}(\omega)L^{-1}_{N}(\omega)} \prod_{j=1}^N \al(d\omega_j) \right).
	\end{align*}
	Define a sequence $(\sigma_{[1,N]})_{N\geq 1}$ of i.i.d. random variables with law $\al$. Then the sequence $(\mathbb{P}_{L_N(\sigma_{[1,N]})})_{N\geq1}$ of laws for the empirical distribution satisfies a large deviation principle with speed $N$ and rate function $I(\cdot\vert \al)$ by Sanov's Theorem. Hence we have with Varadhan's Lemma that
	\begin{align*}
	p(\beta,\al)&= \lim_{N\rightarrow \infty} \frac{1}{N} \log\left(\int_{\mathcal{M}_1(E)} e^{-N\beta \nu(1)\nu(-1)} \mathbb{P}_{L_N(\sigma_{[1,N]})}(d\nu)\right)\\
	&=\sup_{\nu\in \mathcal{M}_1(E)} (-H_\beta(\nu)- I(\nu\vert \al)).
	\end{align*}
\end{proof}
Since the supremum is taken over a compact set it exists and will not lie on the boundary of $\mathcal{M}_1(E)$. This follows by the boundedness of $H_\beta$ and the properties of the relative entropy. To find all maximizers define $f(\nu):= -H_\beta(\nu)-I(\nu\vert \al)$ and take directional derivative in direction of the massless signed measure $\rho$ defined on $(E,\mathcal{E})$. This yields
\begin{align*}
\partial_{\rho}f(\nu)_{\vert_{t=0}} 
= -\beta(\nu(1)\rho(-1)+\nu(-1)\rho(1)) -\sum_{i\in\{-1,0,1\}} \rho(i) \log\left(\frac{\nu(i)}{\al(i)}\right).
\end{align*}
Now let $\nu_{m}$ denote maximizer of the function $f$. Taking $\rho(1) = -1,\rho(0)=1$ and $\rho(-1)=-1,\rho(0)=1$ gives the two equations
\begin{align}\label{equi: direc deri 1}
0&= \beta \nu_m(-1)+ \log\left(\frac{\nu_m(1)}{\al(1)}\right) - \log\left(\frac{\nu_m(0)}{\al(0)}\right)
\end{align}
and
\begin{align}\label{equi: direc deri 2}
0&= \beta \nu_m(1)+ \log\left(\frac{\nu_m(-1)}{\al(-1)}\right) - \log\left(\frac{\nu_m(0)}{\al(0)}\right).
\end{align} 
We have the following Lemma. \begin{lemma}
	Let $\al\in \mathcal{M}_1(E)$ be symmetric and $\beta>0$. Then there exists a 
	$\beta$-dependent solution $\nu_s\in\mathcal{M}_1(E)$ of the equations  \eqref{equi: direc deri 1} and \eqref{equi: direc deri 2} which is symmetric. Furthermore this is the only symmetric solution of the equations.
\end{lemma}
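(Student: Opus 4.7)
The plan is to exploit the symmetry of $\al$ to reduce the two coupled equations \eqref{equi: direc deri 1} and \eqref{equi: direc deri 2} to a single one-dimensional equation, and then solve it by a straightforward monotonicity argument.

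First, I would make the symmetric ansatz $\nu_s(1)=\nu_s(-1)=y$, with $\nu_s(0)=1-2y$, for some $y\in(0,1/2)$. Subtracting \eqref{equi: direc deri 1} from \eqref{equi: direc deri 2} gives
\begin{equation*}
\beta(\nu_s(1)-\nu_s(-1))+\log\!\frac{\nu_s(-1)}{\nu_s(1)}-\log\!\frac{\al(-1)}{\al(1)}=0,
\end{equation*}
which is automatically satisfied under the ansatz together with $\al(1)=\al(-1)$. So under symmetry the two equations coincide, and what remains is the single equation (from either of them)
\begin{equation*}
F(y):=\beta y+\log\!\frac{y}{\al(1)}-\log\!\frac{1-2y}{\al(0)}=0,\qquad y\in(0,1/2).
\end{equation*}

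Next I would establish existence and uniqueness of a zero of $F$. The derivative
\begin{equation*}
F'(y)=\beta+\frac{1}{y}+\frac{2}{1-2y}
\end{equation*}
is strictly positive on $(0,1/2)$, so $F$ is strictly increasing. Moreover $F(y)\to -\infty$ as $y\downarrow 0$ and $F(y)\to +\infty$ as $y\uparrow 1/2$. By the intermediate value theorem there is a unique $y_s=y_s(\beta)\in(0,1/2)$ with $F(y_s)=0$. Setting $\nu_s(\pm 1):=y_s$ and $\nu_s(0):=1-2y_s$ yields a probability measure which, by construction, solves both \eqref{equi: direc deri 1} and \eqref{equi: direc deri 2} and is symmetric.

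For the uniqueness clause, any symmetric solution must satisfy the reduced equation $F(y)=0$, and we have just shown this has a unique root in $(0,1/2)$; the values $y\in\{0,1/2\}$ are excluded since $\nu_s$ must lie in the interior of the simplex (the maximizer does not lie on the boundary by the earlier observation, and in any case the logarithms in \eqref{equi: direc deri 1}-\eqref{equi: direc deri 2} blow up there). No real obstacle is expected here; the only care needed is to verify that the system really collapses to a single equation under the symmetry ansatz, which is immediate from $\al(1)=\al(-1)$.
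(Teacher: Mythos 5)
Your proposal is correct and follows essentially the same route as the paper: both reduce the system under the symmetric ansatz to a single one-dimensional equation (your $F(y)=0$ is the logarithm of the paper's mean-field equation $e^{-\beta\nu(1)}=\frac{\nu(1)}{1-2\nu(1)}\,\frac{\al(0)}{\al(1)}$) and conclude existence and uniqueness from strict monotonicity together with the boundary behavior at $y\downarrow 0$ and $y\uparrow 1/2$. The only cosmetic difference is that you differentiate $F$ while the paper phrases it as a decreasing function meeting an increasing one with a pole at $1/2$.
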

\begin{proof}
	First define $q:=\frac{\al(0)}{\al(1)}$ as shorthand notation. For symmetric $\nu_m$ \eqref{equi: direc deri 1} and \eqref{equi: direc deri 2} both are equivalent to 
	\begin{align}\label{equ: mean-field}
	e^{-\beta\nu_m(1)} &= \frac{\nu_m(1)}{1-2\nu_m(1)}q.
	\end{align}
	Since $e^{-x}$ is a decreasing function and $\frac{x}{1-2x}$ is an increasing function with a pole at $x=\frac{1}{2}$ there exists precisely one $x<\frac{1}{2}$ with $e^{-x} = \frac{x}{1-2x}$. This implies that there exists precisely one $\nu_s\in\mathcal{M}_1(E)$ depending on $\beta$ and $q$ which solves the above equations and is symmetric. Furthermore $\nu_s(1)$ is decreasing with increasing $\beta$ and $0<\nu_s(1)<\frac{1}{2+q}$. 
\end{proof}
Now we use independent coordinates  $\nu(-1), \nu(1)$
to parametrize the simplex.
In these coordinates the Hessian matrix of the function $f$ is given by

\begin{align*}
A_{f}(\nu)=(-1)\begin{pmatrix}
\frac{1}{\nu(1)} +\frac{1}{1-\nu(1)-\nu(-1)}& \beta + \frac{1}{1-\nu(1)-\nu(-1)}\\
\beta + \frac{1}{1-\nu(1)-\nu(-1)} & \frac{1}{\nu(-1)} +\frac{1}{1-\nu(1)-\nu(-1)}
\end{pmatrix}.
\end{align*}
We are seeking for a value of $\beta$ for which the type of the critical point at $\nu_s$ changes. 
The following lemma follows from a computation. 
\begin{lemma}\label{lem: matrix inde}
	The matrix $A_f(\nu_s)$ has an eigenvalue equal to zero if $\b$ equals $\beta_c= q e+2$.
The corresponding empirical measure $\nu_{s,c}$ is given by  $\nu_{s,c}(\pm1)= \frac{1}{qe+2}$.

\end{lemma}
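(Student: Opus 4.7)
The plan is to exploit the structural symmetry of the Hessian at a symmetric stationary point to diagonalize it explicitly, then combine the vanishing-eigenvalue condition with the mean-field equation \eqref{equ: mean-field} to pin down $\beta_c$.

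First I would write $\nu_s(1)=\nu_s(-1)=:v$, so that the Hessian displayed above reduces to a matrix of the form
\begin{equation*}
A_f(\nu_s) = -\begin{pmatrix} a & b \\ b & a \end{pmatrix}, \qquad a=\tfrac{1}{v}+\tfrac{1}{1-2v}, \quad b=\beta+\tfrac{1}{1-2v}.
\end{equation*}
Any matrix with constant diagonal and constant off-diagonal is diagonalized in the basis $(1,1),(1,-1)$; its eigenvalues are $-(a+b)$ and $-(a-b)$. A direct substitution gives
\begin{equation*}
-(a+b) = -\Bigl(\tfrac{1}{v}+\tfrac{2}{1-2v}+\beta\Bigr), \qquad -(a-b)=-\Bigl(\tfrac{1}{v}-\beta\Bigr).
\end{equation*}
The first eigenvalue is manifestly strictly negative for all admissible $v\in(0,1/2)$ and $\beta>0$, so the only way $A_f(\nu_s)$ can have a zero eigenvalue is through the antisymmetric mode, and this happens precisely when $\beta=1/v$, equivalently $v=1/\beta$.

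Next I would impose compatibility with the symmetric stationarity equation \eqref{equ: mean-field}, namely $e^{-\beta v}=\frac{v}{1-2v}q$. Substituting $v=1/\beta$ gives
\begin{equation*}
e^{-1} = \frac{1/\beta}{1-2/\beta}\, q = \frac{q}{\beta-2},
\end{equation*}
which rearranges to $\beta_c=qe+2$. Reading off the corresponding critical measure yields $\nu_{s,c}(\pm 1)=1/\beta_c=\frac{1}{qe+2}$ and $\nu_{s,c}(0)=1-2/\beta_c$, as claimed.

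There is no serious obstacle here: the argument is a one-page computation once the $(1,1)/(1,-1)$ diagonalization is observed. The only point worth being careful about is verifying that the symmetric eigenvalue $-(a+b)$ remains strictly negative at the critical point (so that the zero eigenvalue is unambiguously the antisymmetric one driving the symmetry-breaking bifurcation), which is immediate from $a+b>0$ for $v\in(0,1/2)$.
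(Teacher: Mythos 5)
Your proof is correct and is exactly the computation the paper has in mind: the paper states only that the lemma ``follows from a computation,'' but the diagonalized matrix $D_f(\nu_s)$ displayed in the proof of Lemma \ref{lem: mean sc indefinite} is precisely your eigenvalue pair $-\bigl(\tfrac{1}{v}+\tfrac{2}{1-2v}\bigr)-\beta$ and $-\tfrac{1}{v}+\beta$, and combining the vanishing of the antisymmetric eigenvalue with \eqref{equ: mean-field} to get $\beta_c=qe+2$ and $\nu_{s,c}(\pm1)=\tfrac{1}{qe+2}$ is the intended argument.
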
     
	If we set $q=0$ the critical repulsion strength is $2$. For this $q$ the hole density is zero and the Curie-Weiss WR model reduces
	 just to Curie-Weiss Ising model which has indeed critical inverse temperature $\beta_c=2$ \cite{ellis-newman78} (after taking 
	 into account our parameter choices). 
The next lemma is about the behavior of $A_f(\nu_s)$ for $\beta>\beta_c$.
\begin{lemma}\label{lem: mean sc indefinite}
	For all $\beta>\beta_c$ the matrix $A_f(\nu_s)$ has two eigenvalues different from zero, with different signs. Hence  
	$\nu_s$ is a saddle point.
\end{lemma}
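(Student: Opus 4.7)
The plan is to exploit the symmetric structure of $A_f(\nu_s)$: because $\nu_s(1)=\nu_s(-1)=:x$, the matrix is a symmetric $2\times 2$ matrix with equal diagonal entries, so it is diagonalized by the eigenvectors $(1,1)$ and $(1,-1)$ (the ``symmetric'' and ``antisymmetric'' modes in the $(\nu(1),\nu(-1))$-plane). A direct reading from the formula for $A_f(\nu)$ displayed just before Lemma \ref{lem: matrix inde} then gives the two eigenvalues in closed form. Writing $h := 1/(1-2x)$ for short, they are
\begin{equation*}
\lambda_+ = -\tfrac{1}{x}-2h-\beta,\qquad \lambda_- = \beta-\tfrac{1}{x}.
\end{equation*}
The first is manifestly negative for every admissible $x\in(0,\tfrac12)$ and every $\beta>0$, so the task reduces to proving $\lambda_- >0$ whenever $\beta>\beta_c$.

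To control $\lambda_-$, I would plug the mean-field equation \eqref{equ: mean-field} for the symmetric stationary point into the expression $\lambda_- = \beta - 1/x$. The equation rearranges to $\beta x = \log\!\bigl(\tfrac{1-2x}{qx}\bigr)$, hence
\begin{equation*}
\lambda_- \;=\; \frac{1}{x}\bigl(\beta x - 1\bigr) \;=\; \frac{1}{x}\Bigl(\log\tfrac{1-2x}{qx}\,-\,1\Bigr),
\end{equation*}
which is strictly positive iff $(1-2x)/(qx)>e$, i.e.\ iff $x < \tfrac{1}{qe+2} = \tfrac{1}{\beta_c}$. Thus the sign of $\lambda_-$ at $\nu_s(\beta)$ is governed purely by whether $\nu_s(1;\beta)$ lies below the critical value $\nu_{s,c}(1)=1/\beta_c$ identified in Lemma \ref{lem: matrix inde}.

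Finally, I would invoke the monotonicity statement already proved in the preceding lemma, that $\nu_s(1;\beta)$ is strictly decreasing in $\beta$ (a consequence of the fact that $e^{-\beta x}$ is decreasing in $\beta$ while the right hand side $qx/(1-2x)$ is strictly increasing in $x$, so a larger $\beta$ forces a smaller $x$). Combined with $\nu_s(1;\beta_c) = 1/\beta_c$, this gives $\nu_s(1;\beta) < 1/\beta_c$ for all $\beta>\beta_c$, whence $\lambda_- > 0$. Together with $\lambda_+ < 0$ this exhibits two eigenvalues of opposite signs, so $\nu_s$ is a saddle point.

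I do not expect a genuine obstacle here — the whole argument is essentially one short computation plus an appeal to the monotonicity already established. The only place one must be slightly careful is in identifying the correct eigenvector decomposition of $A_f(\nu_s)$ and in recognising that the algebraic identity $\beta x - 1 = \log\!\bigl(\tfrac{1-2x}{qx}\bigr) - 1$ converts the transcendental question ``when is $\lambda_-$ positive?'' into the elementary inequality $x < 1/\beta_c$, so that the conclusion follows directly from the lemma on the $\beta$-dependence of $\nu_s$.
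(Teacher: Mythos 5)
Your proposal is correct, and its core coincides with the paper's proof: the same diagonalization of $A_f(\nu_s)$ along the symmetric and antisymmetric directions, the same observation that the symmetric eigenvalue $-\tfrac{1}{\nu_s(1)}-\tfrac{2}{\nu_s(0)}-\beta$ is automatically negative, and the same reduction of the problem to the sign of $\beta-\tfrac{1}{\nu_s(1)}$, controlled via the fixed-point equation \eqref{equ: mean-field}. Where you diverge is in the closing step. The paper argues directly at fixed $\beta$ by contradiction: assuming $\nu_s(1)\leq 1/\beta$ forces $e^{-\beta\nu_s(1)}\geq e^{-1}$, hence $\nu_s(1)\geq 1/\beta_c>1/\beta$ via the same algebraic rearrangement you use, which is absurd. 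You instead convert the positivity of $\lambda_-$ into the condition $\nu_s(1)<1/\beta_c$ and then import two external facts: the strict monotonicity of $\beta\mapsto\nu_s(1;\beta)$ and the exact identification $\nu_s(1;\beta_c)=1/\beta_c$ from Lemma \ref{lem: matrix inde}. Both routes are sound; the paper's is marginally more self-contained (it needs only the mean-field equation at the given $\beta$), while yours makes the mechanism more transparent by tying the sign change of $\lambda_-$ directly to the solution branch crossing the critical value $1/\beta_c$ as $\beta$ increases through $\beta_c$. Your parenthetical justification of strict monotonicity (the decreasing curve $x\mapsto e^{-\beta x}$ moves down as $\beta$ grows while the increasing right-hand side is fixed, so the crossing point moves left) is adequate, though note the preceding lemma only asserts ``decreasing,'' so you are right to supply the strictness yourself; also, your shorthand $h:=1/(1-2x)$ collides with the paper's use of $h$ for the magnetic field and should be renamed.
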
 
\begin{proof}
	It is easier to work with the diagonalised form of $A_f(\nu_s)$ which is equal to
	\begin{align*}
	D_f(\nu_s)=
	\begin{pmatrix}
	-(\frac{1}{\nu_s(1)}+\frac{2}{\nu_s(0)})-\beta& 0\\
	0& 	-\frac{1}{\nu_s(1)}+\beta&
	\end{pmatrix}.
	\end{align*}
	The first entry is always negative. Therefore we have to prove that
$\nu_s(1)>\frac{1}{\beta}$.  Indeed, assume $\nu_s(1)\leq \frac{1}{\beta}$.  Then by equation \eqref{equ: mean-field} we have
	\begin{align*}
	\frac{\nu_s(1)}{1-2\nu_s(1)}q =e^{-\beta\nu_s(1)} \geq e^{-1}
	\Leftrightarrow\nu_s(1) \geq \frac{1}{qe+2} = \frac{1}{\beta_c} > \frac{1}{\beta}
	\end{align*}
	which is a contradiction. 
\end{proof}
This all does not answer all relevant questions yet, but we have now an idea where the phase transition can occur. 
To complete the analysis we use a different approach where we will split the Hamiltonian of the 
model into a Curie-Weiss part on the occupied sites, with external magnetic field $h= \frac{1}{2} \log\left(\frac{\alpha(1)}{\alpha(-1)}\right)$,  and a part  which depends on the empirical occupation density. 

\begin{lemma}\label{lem: part. mean-asym}
	Let $\al \in \mathcal{M}_1(E)$, $\beta>0$ and $N \in \mathbb{N}$. Then it follows that
	\begin{align*}
	Z_{N,\beta,\al} 
	=&\sum_{\omega_{[1,N]}\in\Omega_N}e^{N L^0_N(\omega_{[1,N]})\log(\al(0)) 
		+\frac{1}{2}N(1-L^0_N(\omega_{[1,N]}))\log(\al(1)\al(-1))
	-\frac{\beta N}{4}(1-L^0_N(\omega_{[1,N]}))^2}\\
	&\times	\exp\Bigl(\frac{\beta}{4 N}\sum_{i,j \in S(\omega)}
	\omega_i\omega_j+ h\sum_{i\in S(\omega)} \omega_i\Bigl).
	\end{align*}
	where $S(\o)=\{i\,:\, \vert\omega_i\vert=1\}$ is the set of occupied sites.
\end{lemma}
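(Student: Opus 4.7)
The proof is a direct algebraic rearrangement, so the plan is to separate degrees of freedom of occupation from spins on occupied sites and then massage each piece.

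First, I would introduce the counting variables $N^{\pm}(\omega):=\sum_{i=1}^N\mathds{1}(\omega_i=\pm1)$ and $N^0(\omega):=\sum_{i=1}^N\mathds{1}(\omega_i=0)=NL^0_N(\omega)$, which satisfy $N^{+}+N^{-}=N(1-L^0_N)$ and, on the set of occupied sites, $\sum_{i\in S(\omega)}\omega_i=N^{+}-N^{-}$. Observing that $\mathds{1}(\omega_i\omega_j=-1)=\mathds{1}(\omega_i=1)\mathds{1}(\omega_j=-1)+\mathds{1}(\omega_i=-1)\mathds{1}(\omega_j=1)$, the Hamiltonian in the exponent of \eqref{WR} becomes
\begin{equation*}
\frac{\beta}{2N}\sum_{1\le i,j\le N}\mathds{1}(\omega_i\omega_j=-1)=\frac{\beta}{N}N^{+}N^{-}=\frac{\beta}{4N}\Bigl((N^{+}+N^{-})^2-(N^{+}-N^{-})^2\Bigr).
\end{equation*}
Substituting the two identities above and noting that $(N^{+}-N^{-})^2=\bigl(\sum_{i\in S(\omega)}\omega_i\bigr)^2=\sum_{i,j\in S(\omega)}\omega_i\omega_j$, the exponent of the interaction rewrites as
\begin{equation*}
-\frac{\beta}{2N}\sum_{1\le i,j\le N}\mathds{1}(\omega_i\omega_j=-1)=-\frac{\beta N}{4}(1-L^0_N)^2+\frac{\beta}{4N}\sum_{i,j\in S(\omega)}\omega_i\omega_j.
\end{equation*}

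Next, I would deal with the a priori weight. Using the symmetric parametrization $\alpha(\pm1)=\sqrt{\alpha(1)\alpha(-1)}\,e^{\pm h}$, which is just the definition of $h=\frac12\log(\alpha(1)/\alpha(-1))$, one has
\begin{equation*}
\prod_{j=1}^{N}\alpha(\omega_j)=\alpha(0)^{N^0}\,\alpha(1)^{N^{+}}\alpha(-1)^{N^{-}}=\alpha(0)^{NL^0_N}\bigl(\alpha(1)\alpha(-1)\bigr)^{\frac{N(1-L^0_N)}{2}}\exp\!\Bigl(h\sum_{i\in S(\omega)}\omega_i\Bigr).
\end{equation*}
Taking logarithms reproduces the three terms $NL^0_N\log\alpha(0)$, $\tfrac12 N(1-L^0_N)\log(\alpha(1)\alpha(-1))$ and $h\sum_{i\in S(\omega)}\omega_i$ appearing in the claimed formula.

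Combining both computations inside the exponential and summing over $\omega_{[1,N]}\in\Omega_N$ yields the stated expression for $Z_{N,\beta,\alpha}$. There is no real obstacle here; the only point requiring care is keeping track of which sums run over all sites and which only over $S(\omega)$, and handling the degenerate case $S(\omega)=\emptyset$ (in which both the quadratic form and the magnetic field term are empty sums, so the identity still holds trivially).
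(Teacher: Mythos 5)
Your proof is correct and follows essentially the same route as the paper, which simply notes the computation via the identity $\mathds{1}(\omega_i\omega_j=-1)=-\tfrac12(\omega_i\omega_j-1)$ on occupied pairs; your expansion into $\tfrac{\beta}{4N}\bigl((N^{+}+N^{-})^2-(N^{+}-N^{-})^2\bigr)$ is an equivalent rearrangement of the same Hamiltonian. The treatment of the a priori weight via $\alpha(\pm1)=\sqrt{\alpha(1)\alpha(-1)}\,e^{\pm h}$ is exactly what the stated formula requires, so nothing is missing.
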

\begin{proof} By a computation, using $1_{\omega_i\omega_j=-1}=-1/2(\omega_i\omega_j-1)$ for $\omega_i\omega_j\neq 0$. 
%
%
%
\end{proof}
With this representation of the partition function we can prove Lemma \ref{lem: pressure2} where we need the function $J(x)$ as defined in \eqref{equi: entropies}. Note that this function achieves its minimum at $2/3$ which is the typical size of an occupied volume 
when zeros, pluses, and minuses are drawn with equal weight.

\begin{proof}[Proof of Lemma \ref{lem: pressure2}]

We write the Curie-Weiss part of the partition function in terms of the empirical distribution of $+$ and $-$.
	Then again with Varadhan's Lemma and Sanov's Theorem  we obtain after ordering of the suprema over 
	the coordinates of $\nu$ that the pressure is given by 
	\begin{align*}
	p(\beta,\al)
	&= \sup_{0\leq \nu(0)\leq 1} \Biggl(\nu(0)\log\al(0) 
	+\frac{1}{2}(1-\nu(0))\log(\al(1)\al(-1))- \frac{\beta}{4}(1-\nu(0))^2 - \log(3\nu(0))\nu(0)\\
	&+\sup_{{\nu(1):}\atop{0\leq \nu(1)\leq 1-\nu(0)}} \Bigl[\frac{\beta}{4}(2\nu(1)+\nu(0)-1)^2+h (2\nu(1)+\nu(0)-1)\\
	&-  \log(3\nu(1)) \nu(1) - \log(3(1-\nu(1)-\nu(0)))(1-\nu(1)-\nu(0))\Bigr]\Biggr).
	\end{align*}
	We want to rewrite the inner supremum such that we can recognize the pressure of a Curie-Weiss model 
	at an effective temperature. 
	To do so, we write for the square bracket above 
	\begin{align*}
	&(1-\nu(0))\left[\frac{\beta(1-\nu(0))}{4}\left(\frac{2\nu(1)}{1-\nu(0)}-1\right)^2+h \left(\frac{2\nu(1)}{1-\nu(0)}-1\right)\right.\\
	&-  \log\left(2\frac{\nu(1)}{1-\nu(0)}\right) \frac{\nu(1)}{1-\nu(0)} - \log\left(2\left(1-\frac{\nu(1)}{1-\nu(0)}\right)\right)\left(1-\frac{\nu(1)}{1-\nu(0)}\right)
	\left.-  \log\left(\frac{3}{2}(1-\nu(0))\right)\right].
	\end{align*}
	 Note that $\tilde{\nu}$ with $\tilde{\nu}(\pm1)= \frac{\nu(\pm1)}{1-\nu(0)}$ defines a probability measure in $\mathcal{M}_1(\{-1,1\})$. 
	Comparing with the representation of the pressure of a Curie-Weiss model which, expressed in terms of the empirical distribution, 
	is given  
	by $$p^{CW}(\beta,h)= \sup_{0\leq\tilde \nu(1)\leq 1} (\frac{\beta}{2} (2\tilde \nu(1)-1 )^2+h(2\tilde \nu(1)-1)-\tilde \nu(1)\log(2\tilde \nu(1))-(1-\tilde \nu(1))\log(2(1-\tilde \nu(1))),$$ 
and changing to the parametrization \eqref{twofour} for the measure $\nu$, 
\eqref{equi: pressure2} follows. 
	\end{proof}
Now we are able to prove the representation theorem.
\begin{proof}[Proof of Theorem \ref{closedsolution}]
	By taking partial derivatives of the function inside of the sup in directions $x$ and $m$ we get the equations 
	\begin{equation}\label{mubn11}\begin{split}
	&0= l -\log (2\cosh h)- \beta x/2-J'(x)
	+m^2 x\beta/2 +hm-I(m) \cr
	\end{split}
	\end{equation}
	and 
	\begin{equation}\label{mubn12}\begin{split}
	&0=m x\beta/2 + h-  I'(m). \cr
	\end{split}
	\end{equation}
	Note that $J'(x)= \log  x - \log (1-x)-\log 2$ which is an invertible function in $(0,1)$ and $I'(m)= \frac{1}{2}\log(\frac{1+m}{1-m})$.
	We are interested in the behaviour of  $m, x$
	as a function of $\beta,h$ where we better
	treat $b=\beta x$ (instead of $\beta$) and  $h$ as independent 
	parameters. For $m\neq 0$ we have from the second equation
	\begin{equation}\label{mubn13}\begin{split}
	&b(m,\al)=\frac{2}{m}( I'(m)-h) \cr
	\end{split}
	\end{equation}
	which we recognise as an Curie-Weiss part of our model.
	We have from the first equation 
	\begin{equation}\label{mubn14}\begin{split}
	&x=x(b,\al,m)=(J')^{-1}\Bigl( l -\log (2\cosh h)- b/2
	+m^2 b/2 +h m-I(m) \Bigr)\cr
	\end{split}
	\end{equation}
	and $(J')^{-1}(x)= \frac{2}{2+e^{-x}}$.
	From the last two equations we get $x=x(m,\al)$, as in \eqref{equi: asym x}.
	From \eqref{mubn12} we get $\beta=\beta(x,m,\al)$. 
	Putting this together with \eqref{mubn14}
	we finally obtain \eqref{equi: asym beta} \end{proof}
From \eqref{mubn12} we see that $m=0$ can only be a candidate for a critical point if $h=0$. Therefore we suppose $h=0$ but then we are in the symmetric case and here we know that there exists always a unique symmetric solution and hence there exists an $x$ such that $m=0$ is critical point. \newline
Now fix $h$. Then we can get the desired curve $m$ vs. $\beta$ as  
a curve parametrized by $m$. 
\begin{figure}[!htb]\centering
	\subfloat[$h= 0,\al(0)=0.2$]{\includegraphics[width=0.3\textwidth]{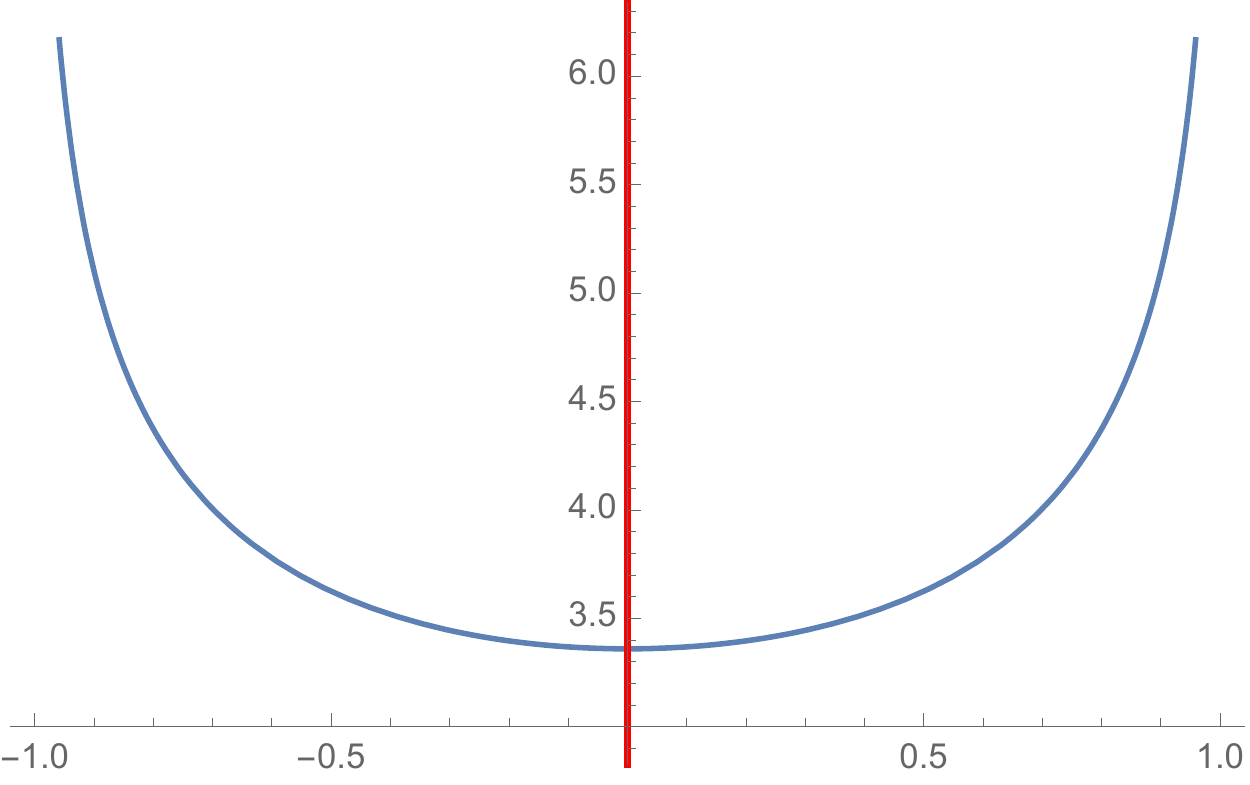}}\hspace{2cm}
	\subfloat[$h= 0.1438,\al(0)=0.3$]{\includegraphics[width=0.3\textwidth]{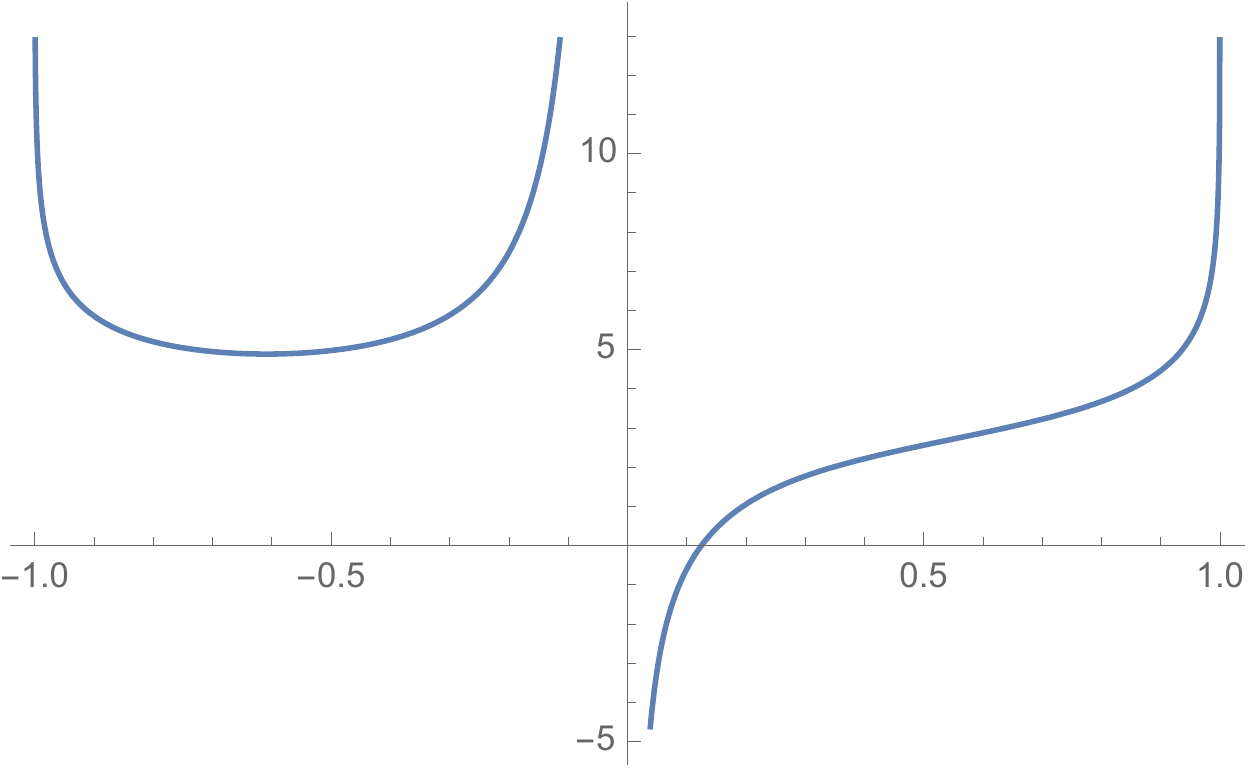}}
	\caption{Plots of $\beta$ vs $m$ at fixed $\al$}
\end{figure}
For $h=0$ the red line indicates that for every $\beta>0$ there is critical point with $m=0$. With the function $\beta(m,\al)$ we can describe the phase transition regimes of the symmetric Curie-Weiss WR model. 
\begin{lemma} \label{lem: mean sc sym }
	Let $\al\in\mathcal{M}(E)$ symmetric then for $\beta<\beta_c$ there exists no $m$ such that equation \eqref{equi: asym beta} holds and for $\beta>\beta_c$ there exist exactly two different values $m_1,m_2$ such that  $\eqref{equi: asym beta}$ holds. Furthermore $m_1$ and $m_2$ are related by $m_1=-m_2$.
\end{lemma}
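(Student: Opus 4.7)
The plan is to reduce the two-variable equation to a monotonicity statement for a single-variable function by exploiting the symmetry of $\alpha$. Since $\alpha(1)=\alpha(-1)$ we have $h=\frac12\log(\alpha(1)/\alpha(-1))=0$, so \eqref{equi: asym beta} simplifies to
\begin{equation*}
\beta(m,\alpha)=\frac{2\,I'(m)}{m}\bigl(1+e^{g(m)}\bigr),\qquad g(m):=-l+\frac{I'(m)(1-m^2)}{m}+I(m).
\end{equation*}
Because $I$ is even and $I'$ is odd on $(-1,1)$, $m\mapsto\beta(m,\alpha)$ is even. I would therefore restrict attention to $m\in(0,1)$ and produce the second solution automatically as $m_2=-m_1$.

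On $(0,1)$ I would first determine the boundary behaviour. Using $I'(m)=m+m^3/3+O(m^5)$, one gets $I'(m)/m\to 1$ and $g(m)\to -l+1$ as $m\to 0^+$, so
\begin{equation*}
\lim_{m\to 0^+}\beta(m,\alpha)=2(1+e^{1-l})=2+e\,\frac{\alpha(0)}{\alpha(1)}=\beta_c,
\end{equation*}
while as $m\to 1^-$, $(1-m^2)\atanh(m)\to 0$ keeps $g$ bounded but $I'(m)/m\to\infty$, giving $\beta(m,\alpha)\to\infty$.

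The heart of the argument is strict monotonicity of $m\mapsto\beta(m,\alpha)$ on $(0,1)$. I would split $\beta(m,\alpha)=2[F(m)+F(m)e^{g(m)}]$ with $F(m):=I'(m)/m=\atanh(m)/m$. The first summand $F$ is elementarily strictly increasing (its power series has non-negative coefficients). For the second summand, using $I''(m)=1/(1-m^2)$ one finds $g'(m)=(m-I'(m))/m^2$, and a short computation gives
\begin{equation*}
\bigl(\log[F(m)\,e^{g(m)}]\bigr)'=\frac{F'(m)}{F(m)}+g'(m)=\frac{m^2/(1-m^2)-I'(m)^2}{m^2\,I'(m)},
\end{equation*}
so positivity reduces to the scalar comparison
\begin{equation*}
\frac{m}{\sqrt{1-m^2}}>\atanh(m),\qquad m\in(0,1),
\end{equation*}
which follows because the difference vanishes at $0$ and has derivative $(1-\sqrt{1-m^2})/(1-m^2)^{3/2}>0$.

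With both summands strictly increasing, $\beta(\cdot,\alpha)$ is a continuous strictly increasing bijection $(0,1)\to(\beta_c,\infty)$. For $\beta\leq\beta_c$ this yields no root of \eqref{equi: asym beta} in $(-1,1)\setminus\{0\}$, while for $\beta>\beta_c$ it yields a unique $m_1\in(0,1)$ and, by evenness, a unique $m_2=-m_1\in(-1,0)$. The main obstacle I anticipate is precisely the monotonicity of the product $F(m)(1+e^{g(m)})$: since $F$ increases while $e^{g(m)}$ decreases, no single factor drives the sign, and the cleanest way out is the splitting above combined with the elementary inequality $\atanh(m)<m/\sqrt{1-m^2}$.
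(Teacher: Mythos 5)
Your proof is correct and follows essentially the same route as the paper: both split $\beta(m,\alpha)$ into the Curie--Weiss part $2I'(m)/m$ plus a positive remainder, show each summand is strictly increasing on $(0,1)$, identify the limit $\beta_c$ as $m\downarrow 0$ and the divergence at $m\uparrow 1$, and conclude by evenness in $m$; your key inequality $m/\sqrt{1-m^2}>\atanh(m)$ is exactly the paper's condition $4m^2>(1-m^2)\log^2\left(\frac{1+m}{1-m}\right)$ in disguise. The only differences are cosmetic: you reach that inequality through a logarithmic derivative of $F(m)e^{g(m)}$ rather than by differentiating the explicit product form, and you verify it with a single first-derivative computation where the paper uses a convexity (second-derivative) argument.
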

\begin{proof}
	For a symmetric a priori measure $\al$ the function $\beta(m,\al)$ simplifies to
	\begin{align*}
	\beta(m,\al)= \frac{2}{m} I'(m)  (1+e^{-l+\frac{1}{m}I'(m)-mI'(m)+I(m)}).
	\end{align*}
	Since $\lim_{m\rightarrow 0}\frac{I'(m)}{m}=1$ we have 
	\begin{align}\label{equi: lim funk beta }
	\lim_{m\rightarrow 0}\beta(m,\al) = 2+q e = \beta_c
	\end{align}
	which is the critical beta for the symmetric model introduced earlier. Note that $e^{l}=2 q^{-1}$. 
	For $m>0$ the function $\beta(m,\al)$ is monotonically increasing. To see this write
	\begin{align*}
	\beta(m,\al)= \frac{2}{m} I'(m)  +\frac{2}{m} I'(m) (1-m)^{\frac{m-1}{2 m}} (m+1)^{\frac{m+1}{2 m}}q.
	\end{align*}
	The first summand is just an Curie-Weiss part $\beta^{Is}(m)= \frac{2}{m} I'(m)$ and it is known that this function is monotonically increasing on $m\in(0,1)$. For the remaining summand $\beta^{R}(m,\al):= \frac{2}{m} I'(m)  (1-m)^{\frac{m-1}{2 m}} (m+1)^{\frac{m+1}{2 m}} q$ we have to take a derivative which yields
	\begin{align*}
	\frac{\partial\beta^{R}(m,\al)}{\partial m}&= \frac{(1-m)^{-\frac{m+1}{ m}} (m+1)^{\frac{1-m}{2m}} q }{ 2m^3 }\left[4 m^2+\left(m^2-1\right) \log ^2(1-m)\right.\\
	&+\left.\left(m^2-1\right) \log ^2(m+1)-2 \left(m^2-1\right) \log (1-m) \log (m+1)\right].
	\end{align*}
	The desired monotonicity follows, if we can show that the last factor is bigger than $0$. This is equivalent to 
	\begin{align*}
	4m^2-(1-m^2)\log^2\left(\frac{1+m}{1-m}\right) >0
	\end{align*}
	which is again equivalent to
	\begin{align}\label{equi: posi m}
	2m+\sqrt{1-m^2} \log\left(\frac{1-m}{1+m}\right) >0
	\end{align} because $m>0$. The second derivative of the function $h(m) =\sqrt{1-m^2} \log\left(\frac{1-m}{1+m}\right)$ is equal to
	\begin{align*}
	h''(m)= -\frac{\log \left(\frac{1-m}{m+1}\right)}{\left(1-m^2\right)^{3/2}}
	\end{align*}
	which is strictly positive for all $m\in(0,1)$. Hence $h$ is strictly convex on $(0,1)$ and therefore $\tilde{h}(m):= 2m+\sqrt{1-m^2} \log(\frac{1-m}{1+m})$ is strictly convex on $(0,1)$. Since $\tilde{h}(0)=0$ and $\tilde{h}'(0)=4$ the convexity implies \eqref{equi: posi m}. This gives that $\beta(m,\al)$ is strictly monotonically increasing. \\
	Since $I'(-m)=-I'(m)$ and $I(-m)=I(m)$ it follows that $\beta(-m,\al)=\beta(m,\al)$. Hence $\beta(m,\al)$ is strictly monotonically decreasing on $(-1,0)$. This implies that for every $\beta<\beta_c$ no solution of \eqref{closedsolution} exists. For $\beta>\beta_c$ there exist exactly two solutions $m_1,m_2$ which are related by $m_1=-m_2$ because of $\beta(-m,\al)=\beta(m,\al)$. 
	
\end{proof}
Now we can prove the phase transition for the symmetric model.
\begin{proof}[Proof of Theorem \ref{thm: second order}]
	From Lemma \ref{lem: mean sc sym } we have that for all $\beta \leq\beta_c$ the only critical magnetisation is at $m=0$. This corresponds to the symmetric solution $\nu_s$ of \eqref{equ: mean-field}. We have proven that this solution is unique. Since the supremum of \eqref{equi: sup sym sc} is taken over a compact set we have that this symmetric solution is the unique maximizer. \\
	For $\beta>\beta_c$ the symmetric solution is a saddle point by Lemma \ref{lem: mean sc indefinite}. Again by Lemma \ref{lem: mean sc sym } there exist two critical magnetisations $m_1$ and $m_2$ with $m_1=-m_2$. By arguments as in the proof above we have
	$x(m_1,\al)=x(m_2,\al)$. This implies that there exist two extrema of \eqref{equi: sup sym sc} and both of them are global maximizers which follows by compactness and symmetry.
For all $\beta$ strictly below the critical repulsion strength $\beta_c$ there is only one extremum and no other critical points. Above $\beta_c$ there are two maximizers. So the model has a second order phase transition.
\end{proof}

For the asymmetric model we investigate the domains $m\in (0,1)$ and $(-1,0)$ separately. We fix now $h>0$ but by symmetry the following lemmas will also hold for $h<0$ with appropriate adjustments. First we prove that $\beta(m,h)\geq c>0$ if $m$ is negative.
\begin{lemma}\label{lem: exist delta}
	Let $\al \in \mathcal{M}(E)$ such that $\al^* >0$. Then there exists a $\delta_\al>0$ such that $\beta(m,\al)>\delta_\al$ for all $m\in(-1,0)$. 
\end{lemma}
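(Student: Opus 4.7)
The plan is to factor $\beta(m,\alpha)$ from \eqref{equi: asym beta} as $g(m)\bigl(1+e^{\phi(m)}\bigr)$, where $g(m):=\frac{2}{m}(I'(m)-h)$ and $\phi(m)$ denotes the exponent appearing inside the bracket. Since $1+e^{\phi(m)}\geq 1$, it suffices to bound $g$ below by a strictly positive constant on $(-1,0)$. This conveniently eliminates the need to control the somewhat awkward expression $\phi(m)$, which is the only genuinely delicate piece of the formula.

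First I would verify positivity of $g$ on $(-1,0)$. Since $I'(m)=\tfrac{1}{2}\log\frac{1+m}{1-m}$ is odd, strictly increasing, and vanishes at $m=0$, one has $I'(m)<0$ for $m\in(-1,0)$, so $I'(m)-h<0$ under the hypothesis $h>0$; combined with $m<0$ this makes the ratio $g(m)$ strictly positive. Continuity of $g$ on $(-1,0)$ is clear.

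Next I would inspect the endpoints. As $m\uparrow 0$, $I'(m)-h\to -h$ is a negative finite limit while $2/m\to -\infty$, giving $g(m)\to +\infty$. As $m\downarrow -1$, $I'(m)\to -\infty$ while $2/m\to -2$, again yielding $g(m)\to +\infty$.

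The argument then closes by compactness: extending $g$ by the value $+\infty$ at the two endpoints yields a lower semi-continuous $(0,\infty]$-valued function on the compact interval $[-1,0]$, which therefore attains its infimum at some interior point $m_*$ with $g(m_*)>0$. Setting $\delta_\alpha:=g(m_*)$ gives $\beta(m,\alpha)\geq g(m)\geq \delta_\alpha$ throughout $(-1,0)$, as required. I do not anticipate any serious obstacle: the one potentially troublesome factor in $\beta(m,\alpha)$, namely the exponential, is dispatched for free by the elementary bound $1+e^{\phi}\geq 1$, so only routine sign and limit computations remain.
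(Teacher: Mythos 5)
Your proof is correct and follows essentially the same route as the paper's: both factor out the term $(1+e^{\phi(m)})\geq 1$, reduce to the sign analysis of $\frac{2}{m}(I'(m)-h)$ on $(-1,0)$ using $h>0$, and conclude via the divergence of this factor at both endpoints together with continuity/compactness. The only cosmetic difference is that you spell out the lower semi-continuous extension argument explicitly, which the paper leaves implicit.
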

\begin{proof}
	Since $(1+e^{-l+\log(\cosh(h)) +\frac{1}{m}(I'(m)-h)-mI'(m)+I(m)})$ is always bigger than $1$ we have only to consider the function $m\mapsto\frac{2}{m}(I'(m)-h)$. By the definition of $I'(m)$ we have to prove that there exists a $\delta_\al$ such that
	\begin{align*}
	\frac{2}{m}\left(\frac{1}{2}\log\left(\frac{1+m}{1-m}\right)-h\right) >\delta_\al
	\end{align*}
	for all $m \in(-1,0)$.  It is enough to prove $(\frac{1}{2}\log(\frac{1+m}{1-m})-h)<0$ since $\lim_{m\downarrow -1}\beta(m,\al)=\lim_{m\uparrow 0}\beta(m,\al) = \infty$ and $m<0$. Because $m<0$ the logarithm  $\log(\frac{1+m}{1-m})$ is negative and therefore the above inequality holds for all $m\in(-1,0)$. \\
\end{proof}
Clearly the function $\beta(m,\al)$ has some global minimizer on $(-1,0)$ and therefore there exists a best $\delta_\al$. But to find this minimizer is analytically quite hard.  The next lemma is about the domain $(0,1)$.
\begin{lemma}\label{lem: mono beta asym}
	Let $\al \in \mathcal{M}(E)$ such that $\al^*>0$. Then for every $\beta>0$ there exists a $m \in (0,1)$  such that equation \eqref{equi: asym beta} holds. It is the unique solution if $\beta<\delta_\al$. Furthermore $\beta(m,\al) <0$ for all $m\in \left(0,\frac{e^{2h}-1}{e^{2h}+1} \right)$ and $\beta(m,\al)\geq 0$ for all $m\in \left[\frac{e^{2h}-1}{e^{2h}+1},1\right)$. 
\end{lemma}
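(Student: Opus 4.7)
The plan is to establish the statement in three stages: first the sign inequalities, then existence via the intermediate value theorem, and finally uniqueness by combining Lemma \ref{lem: exist delta} with a monotonicity argument. Throughout, I abbreviate
$\phi(m) := -l + \log\cosh h + \tfrac{1}{m}(I'(m) - h) - m I'(m) + I(m)$
so that $\beta(m,\alpha) = \tfrac{2}{m}(I'(m) - h)\bigl(1 + e^{\phi(m)}\bigr)$.

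For the sign inequalities I would use that $I'(m) = \tfrac{1}{2}\log\tfrac{1+m}{1-m}$ is strictly increasing on $(-1,1)$ with $I'(m) = h$ exactly at $m_h := \tanh h = \tfrac{e^{2h} - 1}{e^{2h}+1}$, while the factor $1 + e^{\phi(m)}$ is strictly positive. Since $m > 0$, the sign of $\beta(m,\alpha)$ then matches the sign of $I'(m) - h$, giving $\beta(m,\alpha) < 0$ on $(0, m_h)$, $\beta(m_h,\alpha) = 0$, and $\beta(m,\alpha) > 0$ on $(m_h, 1)$.

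For existence I would apply the intermediate value theorem to $\beta(\cdot,\alpha)$ on $[m_h, 1)$. The left endpoint contributes $\beta(m_h,\alpha) = 0$. For the right endpoint I would show $\beta(m,\alpha) \to +\infty$ as $m \to 1^{-}$: the prefactor $\tfrac{2}{m}(I'(m) - h)$ diverges logarithmically, while $\phi(m)$ tends to a finite limit by the cancellation $\tfrac{1}{m}(I'(m) - h) - m I'(m) = \tfrac{1}{m}\bigl((1-m^2)I'(m) - h\bigr)$ together with $(1-m^2)I'(m) \to 0$ and $I(m) \to \log 2$. Continuity then furnishes, for every $\beta > 0$, some $m \in (m_h, 1) \subset (0,1)$ with $\beta(m,\alpha) = \beta$.

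For uniqueness when $\beta < \delta_\alpha$, Lemma \ref{lem: exist delta} excludes solutions in $(-1, 0)$, and the sign analysis excludes them in $(0, m_h]$, so the remaining candidates lie in $(m_h, 1)$. There I would establish strict monotonicity of $\beta(\cdot,\alpha)$. For the Ising part $\beta^{Is}(m) := \tfrac{2}{m}(I'(m) - h)$ a direct calculation yields $\tfrac{d}{dm} \beta^{Is}(m) = \tfrac{2}{m^2}\bigl(\tfrac{m}{1-m^2} - I'(m) + h\bigr)$; the bracket is positive at $m_h$ and has derivative $\tfrac{2m^2}{(1-m^2)^2} > 0$, so it remains positive throughout $(m_h, 1)$, mirroring the symmetric argument of Lemma \ref{lem: mean sc sym }. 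The hard part will be upgrading monotonicity from $\beta^{Is}$ to the full product $\beta(m,\alpha) = \beta^{Is}(m)(1 + e^{\phi(m)})$, since $1 + e^{\phi(m)}$ is positive but not obviously monotone. I would attack this by working with $(\log\beta)'(m) = (\log\beta^{Is})'(m) + \tfrac{e^{\phi(m)}}{1 + e^{\phi(m)}}\phi'(m)$, exploiting the cancellation structure of $\phi$ inherited from Lemma \ref{lem: pressure2} to simplify $\phi'(m)$, and then bounding it against $(\log\beta^{Is})'(m)$. If a global estimate proves too delicate, a fallback is to use the strict positivity of $\beta'(m_h,\alpha)$ together with the implicit function theorem to get local uniqueness near $m_h$, which would yield uniqueness for all sufficiently small $\beta$ and, after a possible sharpening of $\delta_\alpha$, secure the statement as written.
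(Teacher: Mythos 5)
Your overall route is the same as the paper's: identify $m_r=\tanh h=\frac{e^{2h}-1}{e^{2h}+1}$ as the unique root via the sign of $I'(m)-h$ (your version of this is actually cleaner than the paper's, which just spot-checks the sign at $m_r/2$ and beyond $m_r$), get existence from the boundary behaviour of $\beta(\cdot,\al)$ and the intermediate value theorem, and get uniqueness from Lemma \ref{lem: exist delta} on $(-1,0)$ combined with strict monotonicity on the positive side. Your treatment of the limit $m\uparrow 1$ (finite limit of $\phi$ via $(1-m^2)I'(m)\to 0$) is also fine, and is slightly more careful than the paper's remark that the second factor ``does not play any role''.

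The genuine gap is the monotonicity of the full function $\beta(m,\al)=\beta^{Is}(m)(1+e^{\phi(m)})$, which is exactly the step you flag as ``the hard part'' and then leave as a plan. Your fallback (implicit function theorem near $m_r$, uniqueness only for small $\beta$ after ``sharpening'' $\delta_\al$) would not prove the lemma as stated, since $\delta_\al$ is already fixed by Lemma \ref{lem: exist delta}. The paper closes this step differently: it writes $\beta=\beta^{Is}+\beta^{R}$ as a \emph{sum}, uses the known monotonicity of the Curie--Weiss part, and differentiates $\beta^R$ explicitly, reducing everything to the inequality \eqref{equi: posi m}, $2m+\sqrt{1-m^2}\log\frac{1-m}{1+m}>0$, proved by convexity in Lemma \ref{lem: mean sc sym }. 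The good news is that your logarithmic-derivative route does in fact close, and lands on the same inequality: a direct computation gives $\phi'(m)=\frac{m-(I'(m)-h)}{m^2}$ and, with $u:=I'(m)-h>0$ on $(m_r,1)$, $(\log\beta^{Is})'(m)=\frac{\frac{m}{1-m^2}-u}{mu}$. When $\phi'<0$ you may bound $\frac{e^{\phi}}{1+e^{\phi}}<1$, so it suffices to check $(\log\beta^{Is})'+\phi'\geq 0$, which after clearing denominators is $\frac{m^2}{1-m^2}\geq u^2$, i.e.\ $\frac{m}{\sqrt{1-m^2}}\geq I'(m)-h$; since $h>0$ this follows from $\frac{m}{\sqrt{1-m^2}}\geq I'(m)$, which is precisely \eqref{equi: posi m}. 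You should carry out this reduction explicitly (and note that the same inequality also gives $(\log\beta^{Is})'>0$, covering the case $\phi'\geq 0$); as written, the uniqueness claim is not yet proved.
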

\begin{proof}
	For the first part of the proof it is enough to show by continuity that $\lim_{m\downarrow 0}\beta(m,\al)=-\infty$, $\lim_{m\uparrow 1}\beta(m,h)=\infty$ and that the function $\beta(m,h)$ is monotonically increasing. Since the second factor  $(1+e^{-l+\log(\cosh(h)) +\frac{1}{m}(I'(m)-h)-mI'(m)+I(m)})$ is always bigger than $1$ it does not play any role for the limiting behaviour. Since $\lim_{m\downarrow 0} \frac{2}{m}(I'(m)-h)=-\infty$ it follows that $\lim_{m\downarrow 0}\beta(m,h)=-\infty$. For the behaviour at $1$ it follows that
	$\lim_{m\uparrow 1} \frac{2}{m}(I'(m)-h)=\infty$ and therefore $\lim_{m\uparrow 1}\beta(m,h)=\infty$. The monotonicity follows by a similar computation as in the proof of Lemma \ref{lem: mean sc sym }.\\
	For the second part we need the root of $\beta(m,h)$ which is simply $m_r= \frac{e^{2h}-1}{e^{2h}+1}>0$ and its only root. Now take some $0<m<m_r$ for example $m= m_r/2$ then  $(\frac{1}{2}\log(\frac{1+\frac{1}{2}m_r}{1-\frac{1}{2}m_r})-h) = \arctanh(\frac{1}{2}\tanh(h))-h$ which is always negative for $h>0$. Similarly $(\frac{1}{2}\log(\frac{1+2m_r}{1-2m_r})-h) = \arctanh(2\tanh(h))-h$ which is always positive for $h>0$. This finishes the proof.
\end{proof}

\subsection{Curves of critical point for fixed external magnetic field}

Note that we have found the curves of critical points on the simplex of probability measures over $E$
as a function of $\beta$, for fixed $h$ and $\al(0)$. These are obtained via the explicitly known  function
$m\mapsto (x(m,\al),m)$.  

\begin{figure}[!ht]\centering

	\subfloat[$\beta=0$]{\includegraphics[trim = 0mm 40mm 0mm 0mm  , clip, width=0.35\textwidth]{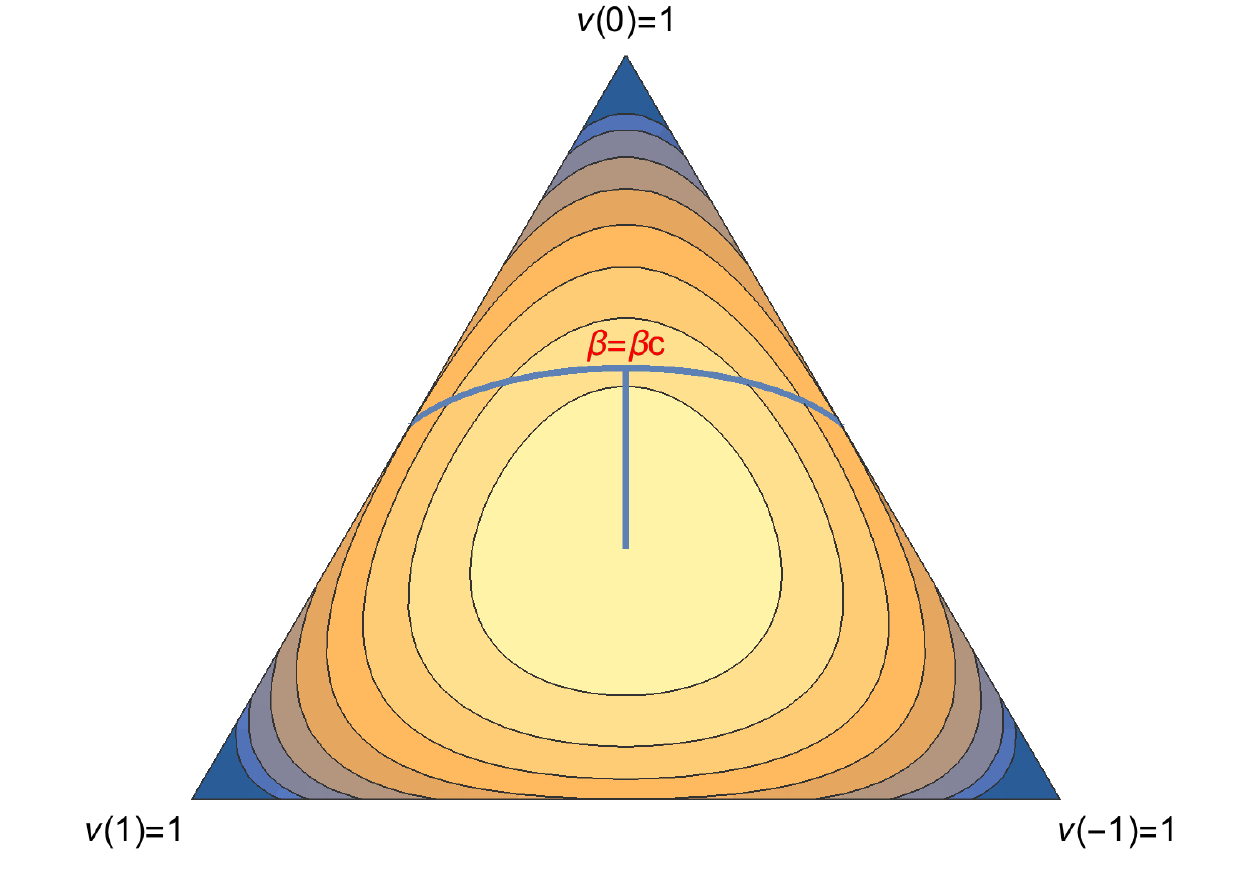}}
	\subfloat[$\beta=3.5<\beta_c$]{\includegraphics[trim = 0mm 40mm 0mm 0mm  , clip, width=0.35\textwidth]{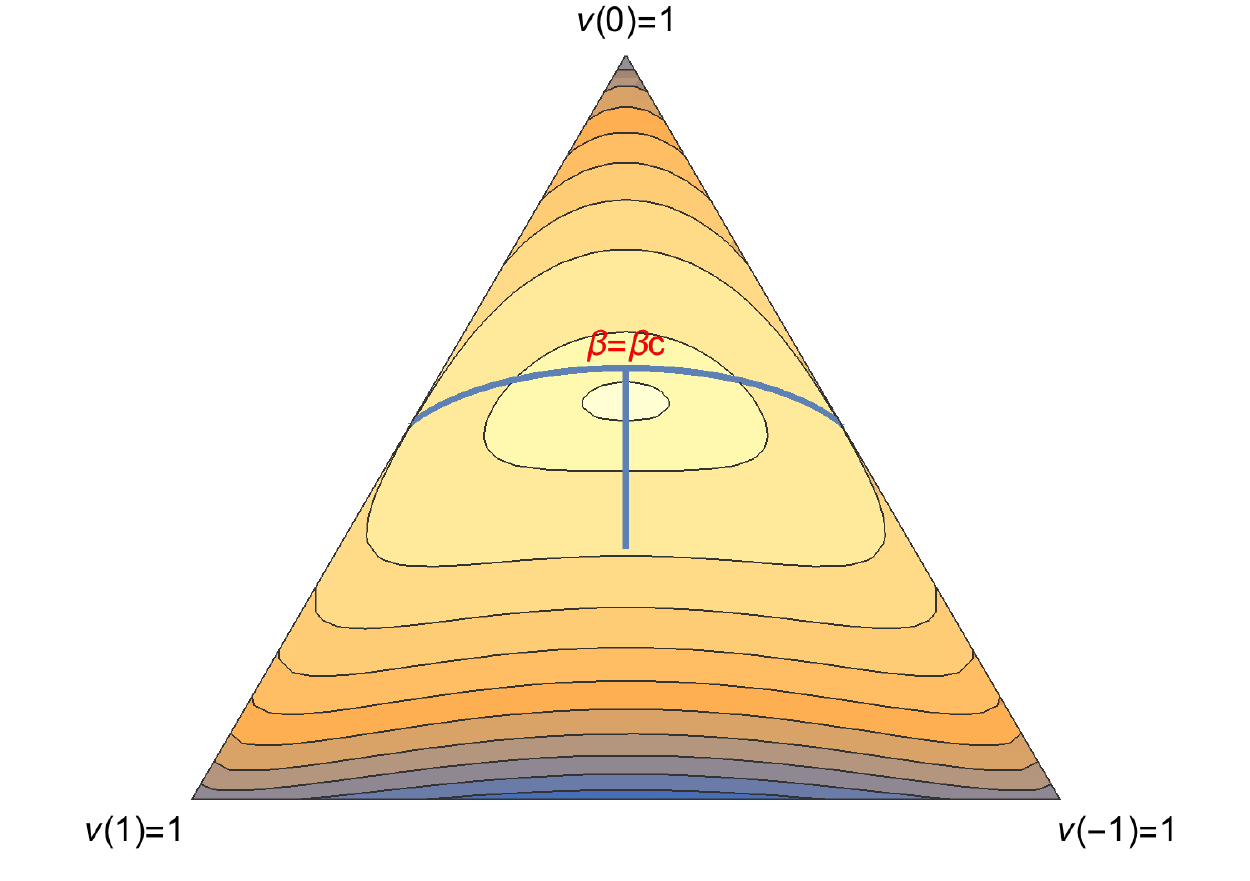}}\hfill 
	
\subfloat[$\beta=5>\beta_c$]{\includegraphics[trim = 0mm 40mm 0mm 0mm  , clip, width=0.35\textwidth]{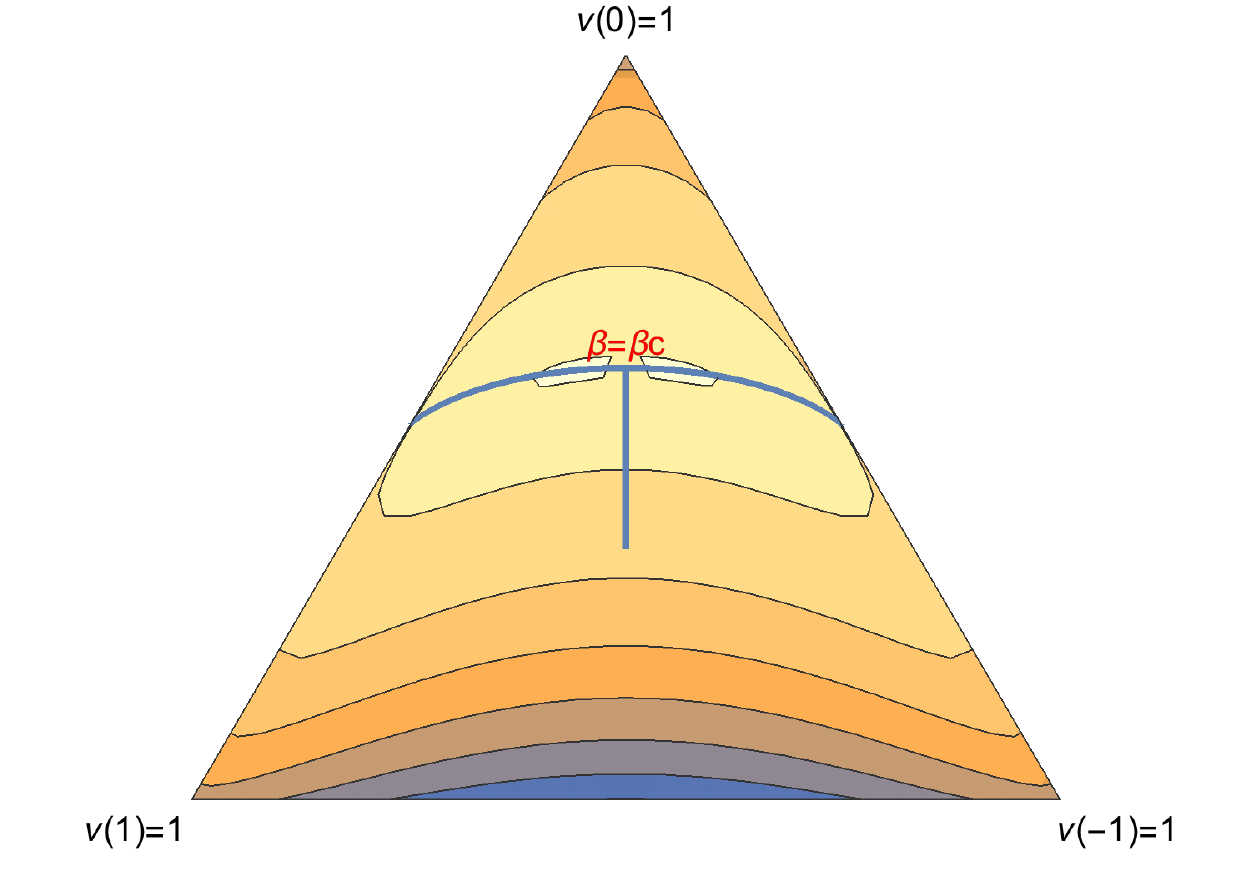}}
	\subfloat[$\beta>>\beta_c$]{\includegraphics[trim = 0mm 40mm 0mm 0mm  , clip, width=0.35\textwidth]{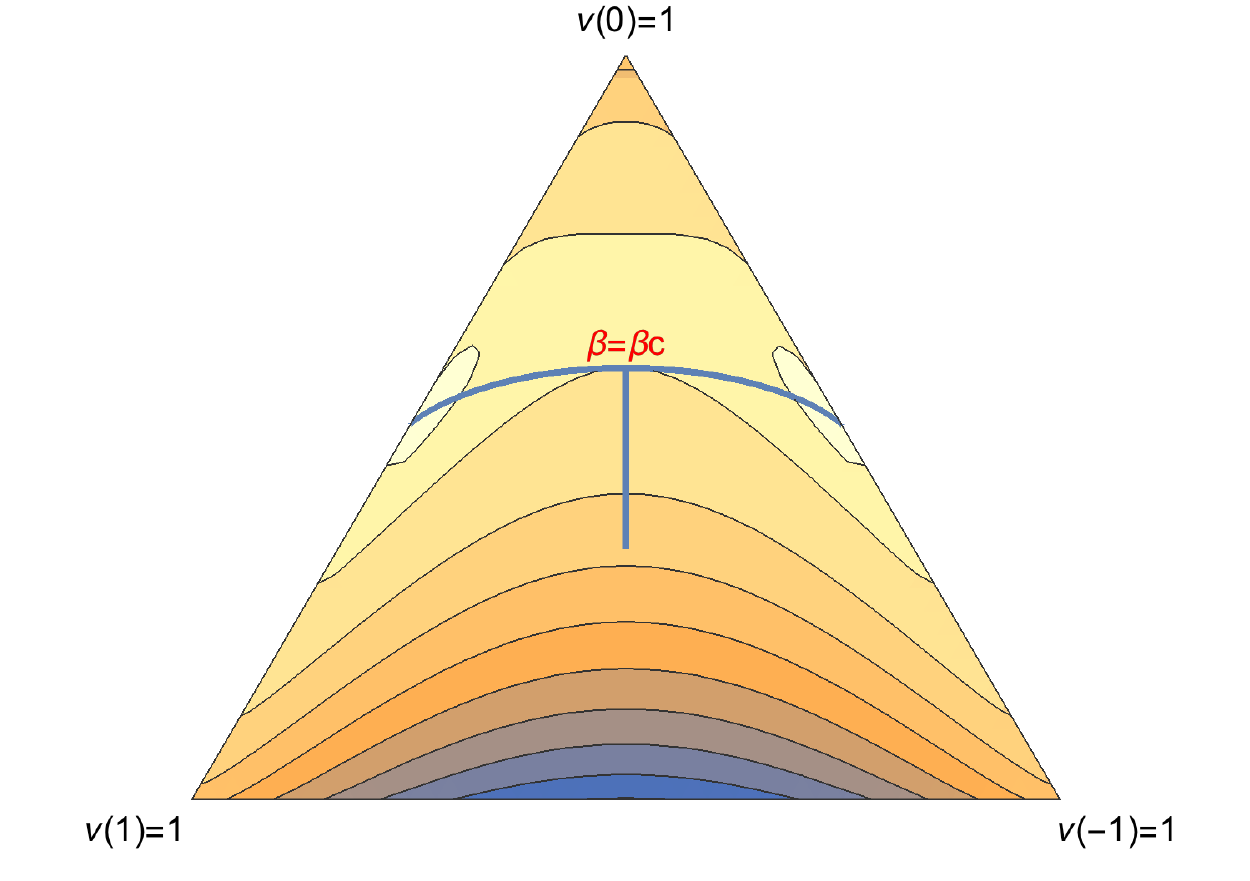}}\caption{Possible maximizers at fixed symmetric $\al$}
\end{figure}
The blue line gives the loci of the maxima of the function $-H_{\beta}(\nu)- I(\nu\vert \al)$ in dependence on $\beta$ for fixed  
symmetric $\al$. Here $\al$ is the equi-distribution which is also the maximizer in the first plot. If $\beta<\beta_c$ we see that only one maximizer exists which lies one the vertical part of the blue line. For $\beta>\beta_c$ the unique maximizer has split into two maximizers. \\
For asymmetric $\al$ the images look different. In the plots we have chosen $\al(1)=0.4$ and $\al(0)=\al(-1)=0.3$, which 
corresponds to an optimal value $\delta_h\approx 6.656$. The red line are the loci for possible other extrema.
\begin{figure}[!ht]\centering
	\subfloat[$\beta=0,h>0$]{\includegraphics[trim = 0mm 40mm 0mm 0mm  , clip, width=0.35\textwidth]{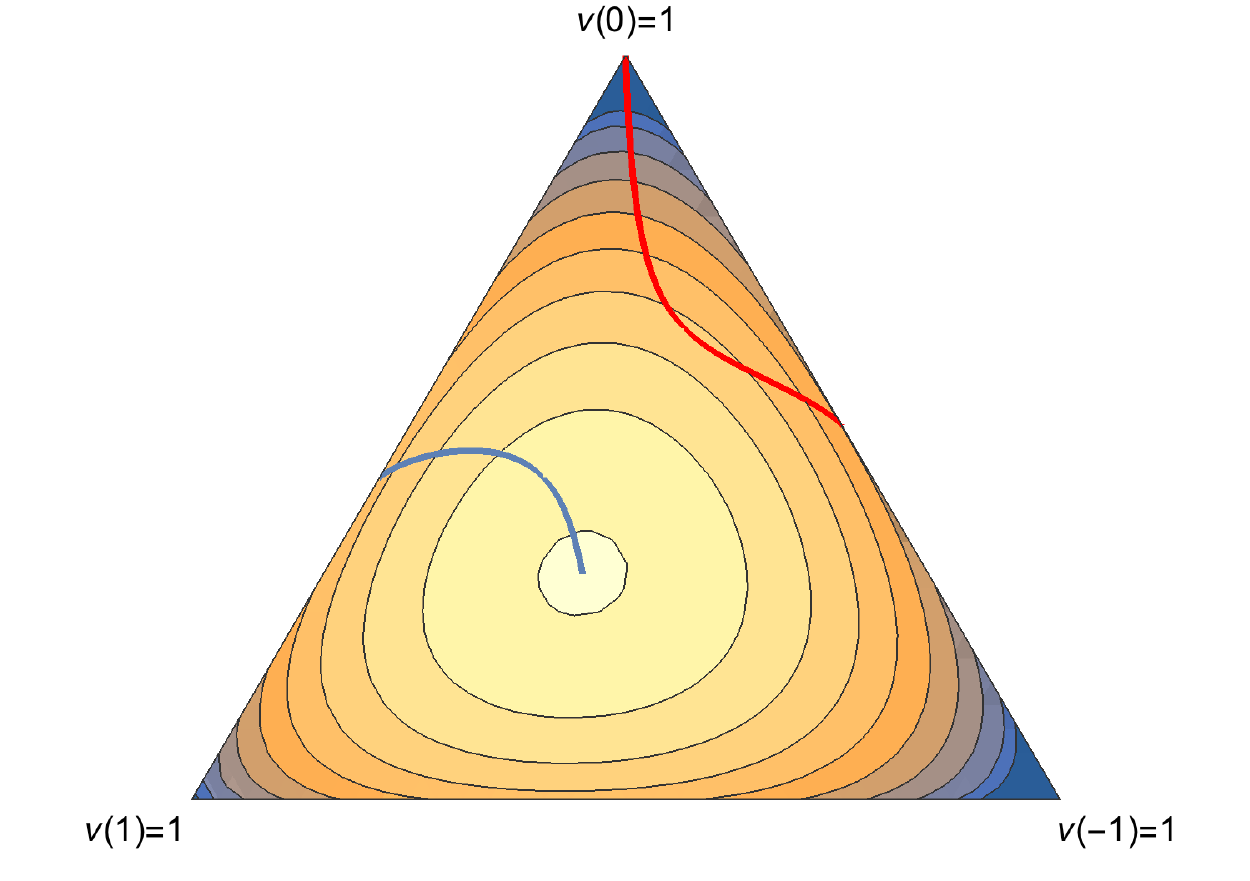}}
	\subfloat[$\beta= 6.7,h>0$]{\includegraphics[trim = 0mm 40mm 0mm 0mm  , clip, width=0.35\textwidth]{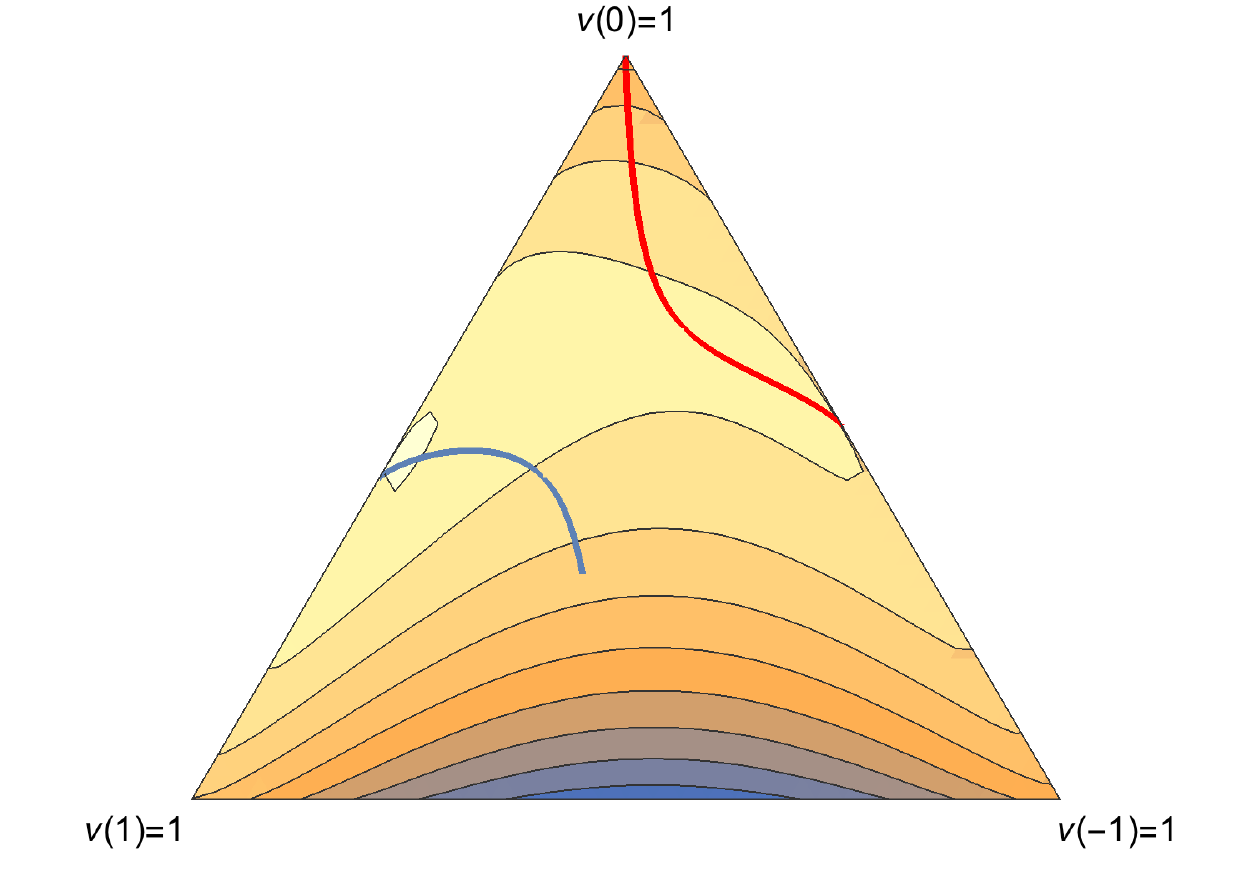}}\caption{Possible maximizers at fixed asymmetric $\al$}
\end{figure}

 \subsection{Critical exponents}
 
We saw before that the phase transition in the static model is of second order. As an additional piece 
of information, we investigate its behavior locally around the transition point, and recover (suitably defined) standard mean-field exponents. 
%
 \begin{theorem}\label{thm: critical beta}
 	Let $\al\in \mathcal{M}(E)$ symmetric. Then
 	\begin{equation}\label{equi: crit exp beta}
 	\lim_{\beta\downarrow \beta_c}\frac{m(\b)}{(\beta-\beta_c)^\frac{1}{2}}=
 	c 
 	\end{equation}	
 	for some constant $c\in(0,\infty)$. 
  \end{theorem}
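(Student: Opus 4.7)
The plan is to exploit the explicit parametrization $\beta=\beta(m,\alpha)$ from Theorem \ref{closedsolution}. In the symmetric case $h=0$, this formula reduces (as already used in the proof of Lemma \ref{lem: mean sc sym }) to
\begin{equation*}
\beta(m,\alpha)=\frac{2\, I'(m)}{m}\Bigl(1+e^{-l}\exp\bigl(\tfrac{I'(m)}{m}-m I'(m)+I(m)\bigr)\Bigr),
\end{equation*}
which is a smooth even function of $m$ on a neighbourhood of $0$, with $\beta(0,\alpha)=\beta_c=2+qe$ (cf. \eqref{equi: lim funk beta }), where $q=\alpha(0)/\alpha(1)$ and $e^{-l}=q/2$. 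The key point is that by the even symmetry $\beta(-m,\alpha)=\beta(m,\alpha)$ the Taylor expansion near $m=0$ has the form
\begin{equation*}
\beta(m,\alpha)=\beta_c+a\, m^2+O(m^4),
\end{equation*}
so if I can verify that $a>0$, then inverting the relation $\beta-\beta_c=a m^2+O(m^4)$ via the implicit function theorem applied to the smooth even map $m\mapsto \beta$ will yield $m(\beta)=\sqrt{(\beta-\beta_c)/a}\,(1+o(1))$ as $\beta\downarrow\beta_c$, proving \eqref{equi: crit exp beta} with $c=1/\sqrt{a}$.

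The concrete computation is then a routine Taylor expansion. Using $I'(m)=m+m^3/3+O(m^5)$ and $I(m)=m^2/2+m^4/12+O(m^6)$, I would expand
\begin{equation*}
\tfrac{I'(m)}{m}=1+\tfrac{m^2}{3}+O(m^4),\qquad \tfrac{I'(m)}{m}-m I'(m)+I(m)=1-\tfrac{m^2}{6}+O(m^4),
\end{equation*}
and substitute into the formula for $\beta(m,\alpha)$. Multiplying the two factors
\begin{equation*}
\tfrac{2 I'(m)}{m}=2+\tfrac{2m^2}{3}+O(m^4),\qquad 1+e^{-l+1}\bigl(1-\tfrac{m^2}{6}\bigr)+O(m^4)=1+\tfrac{qe}{2}-\tfrac{qe}{12}m^2+O(m^4),
\end{equation*}
and collecting the coefficient of $m^2$ gives $a=\tfrac{2}{3}+\tfrac{qe}{6}$, which is strictly positive for every symmetric $\alpha$ with $\alpha(0)\in[0,1)$. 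This supplies the constant $c=(6/(4+qe))^{1/2}\in(0,\infty)$.

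I expect the main (minor) obstacle to be only the bookkeeping of the Taylor coefficients and keeping track of the constants; there is no conceptual difficulty because the parametric description from Theorem \ref{closedsolution} bypasses the need to invert the stationarity equations directly. If one wants to avoid computing $a$ explicitly, one could alternatively observe that the selected symmetry-broken solution $m(\beta)$ is the unique positive root of \eqref{equi: asym beta} for $\beta>\beta_c$ by Lemma \ref{lem: mean sc sym }, and combine the monotonicity of $\beta(\cdot,\alpha)$ on $(0,1)$ with smoothness and the vanishing of the first derivative at $0$ to conclude that $m(\beta)^2/(\beta-\beta_c)$ converges to a positive limit; but the explicit computation above is short enough that I would simply carry it out.
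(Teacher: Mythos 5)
Your proposal is correct and takes essentially the same route as the paper: both reduce \eqref{equi: crit exp beta} to computing $\lim_{m\downarrow 0}(\beta(m,\al)-\beta_c)/m^2$ from the parametrization of Theorem \ref{closedsolution} and then extract the quadratic Taylor coefficient of the even function $m\mapsto\beta(m,\al)$, the paper doing this via L'Hospital's rule applied to the factor $\beta^R$ and you via a direct expansion of $I'(m)/m$ and of the exponent. Your coefficient $a=\tfrac{2}{3}+\tfrac{qe}{6}$ checks out (the paper's stated value $\tfrac{1}{c^2}=\tfrac{2}{3}(1+qe^3)$ appears to carry a computational slip), but since the theorem only asserts $c\in(0,\infty)$ this discrepancy is immaterial.
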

 This means the critical repulsion exponent is equal $\frac{1}{2}$ which is the known 
 value of  the magnetization 
exponent of the Curie-Weiss model. Note that we have no explicit formula for $m(\beta)$ but if we restrict the function $\beta(m,0)$ on $m>0$ or $m<0$ it is bijective and $m(\beta)$ exists. Nevertheless we do not need an explicit formula for $m(\beta)$.
 \begin{proof}
 	The limit in \eqref{equi: crit exp beta} is equivalent to 
 	\begin{equation}
 	\lim_{m\downarrow 0}\frac{(\beta(m,\al)-\beta_c)}{m^2} = \frac{1}{c^2}
 	\end{equation}
 	since $\lim_{m\rightarrow 0} \beta(m,\al)=\beta_c$. Lets first take a look at the difference of the $\beta$'s where we will again recognise a Curie-Weiss part
 	\begin{align*}
 	\beta(m,\al)-\beta_c
 	=\left(\frac{2}{m} I'(m) -2\right) + \left(\frac{2}{m} I'(m) e^{-l +\frac{1}{m}I'(m)-mI'(m)+I(m)} -eq\right) .
 	\end{align*}
 	The first part is the same as in the Curie-Weiss model where we know that the critical exponent is $\frac{1}{2}$ and $\lim_{m\rightarrow 0} =\frac{\frac{2}{m} I'(m) -2}{m^2}=\frac{2}{3}$, cf. \cite{friedli_velenik_2017}. For the rest we can use again the function $\beta^R(m,\al)$ and by the same arguments as for \eqref{equi: lim funk beta } we have $\lim_{m\rightarrow 0} \beta^R(m,\al)= eq$. We will prove the rest of the statement with L'Hospital's rule where the the first two derivatives of $\beta^R$ are needed. The first can be found above
 	and the second is 
 	\begin{align*}
 	\frac{\partial^2\beta^{R}(m,\al)}{\partial m^2}=&\frac{(1-m)^{-\frac{3 m+1}{2 m}} (m+1)^{\frac{1-3m}{2m}} }{q^{-1}2m^5} \left[16 m^5-\left(m^2-1\right)^2 \log ^3(1-m)+\left(m^2-1\right)^2 \log ^3(m+1)\right.\\
 	&+4 \left(m^2-1\right)^2 m \log ^2(m+1)+\left(m^2-1\right)^2 \log ^2(1-m) (4 m+3 \log (m+1))\\
 	&-\left(m^2-1\right) \log (1-m) \left(12 m^2+3 \left(m^2-1\right) \log ^2(m+1)	+8 \left(m^2-1\right) m \log (m+1)\right)\\
 	&\left.+12 \left(m^2-1\right) m^2 \log (m+1)\right].
 	\end{align*}
 	We need that the first derivative converges to zero and the second to some constant bigger than $0$. Note that $\lim_{m\rightarrow 0} (1-m)^{-\frac{m+1}{2 m}} (m+1)^{\frac{1-m}{2m}} q  = eq$ and $\lim_{m\rightarrow 0}(1-m)^{-\frac{m+1}{2 m}} (m+1)^{\frac{1-m}{2m}} q =e^3q$. Therefore we need only consider the sums inside the brackets. Hence for the first derivative we have to investigate
 	\begin{align*}
 	&m^{-3}	\left(4 m^2+\left(m^2-1\right) \log ^2(1-m)+\left(m^2-1\right) \log ^2(m+1)-2 \left(m^2-1\right) \log (1-m) \log (m+1)\right)\\
 	&=\frac{1}{m^3}\left(4m^2-\log\left(\frac{1+m}{1-m}\right)^2\right) + O(m).
 	\end{align*}
 	Define $g(m):= 4m^2-\log^2(\frac{1+m}{1-m})$  then $g$ and the first 3 derivatives have limit $0$  which implies $\lim_{m\rightarrow 0} \frac{g(m)}{m^3} =0$. This gives $\lim_{m\rightarrow 0}\frac{\partial }{\partial m}\beta^R(m) =0$.\\
 	For the second derivative of $\beta^R$ only the last part is of interest which is asymptotically equal to  
 	\begin{align*}
 	\frac{1}{m^5}\left({\log^3\left(\frac{1+m}{1-m}\right)} +4m {\log^2\left(\frac{1+m}{1-m}\right)} -  12m^2 {\log\left(\frac{1+m}{1-m}\right)}\right)-8+O(m).
 	\end{align*}
 	Define the function $w(m):={\log^3(\frac{1+m}{1-m})} +4m {\log^2(\frac{1+m}{1-m})} -  12m^2 {\log(\frac{1+m}{1-m})}$ and this time we need the first 5 derivatives of this function. The first 4 derivatives have limit $0$ and the fifth
 	\begin{align*}
 	w^{(5)}(m)&= \frac{60 m^4+504 m^2+24 \left(25 m^2+16\right) m \log \left(\frac{m+1}{1-m}\right)+9 \left(5 m^4+10 m^2+1\right) \log ^2\left(\frac{m+1}{1-m}\right)+80}{-16^{-1}\left(m^2-1\right)^5}
 	\end{align*}
 	converges against $1280$ which yields $\lim_{m\rightarrow 0}\frac{\partial^2 }{\partial m^2}\beta^R(m) =\frac{4}{3}qe^3$. This implies $\frac{1}{c^2}= \frac{2}{3}(1+qe^3)$.
 \end{proof}
 The second critical exponent we are interested in describes the response 
 to tilting the a priori measure at the fixed critical repulsion strength. 
 It will be equal to $\frac{1}{3}$ like the magnetic field exponent in the Curie-Weiss model.
 \begin{theorem}\label{thm: critical mag.}
 	Let $\beta=\beta_c$ and $l\in\mathbb{R}$. Then 
 	\begin{equation}
 	\lim_{h\downarrow 0}\frac{m(\b_c,\al(h,l))}{h^\frac{1}{3}}= c
 	\end{equation}
 	for some constant $c\in (0,\infty)$.
 \end{theorem}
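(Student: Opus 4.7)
The plan is to work directly with the two stationarity equations \eqref{mubn11} and \eqref{mubn12} at $\beta=\beta_c$, and expand around the triple $(m,h,x)=(0,0,x_s)$, where $x_s=2/\beta_c$ is the symmetric occupation density obtained in Lemma \ref{lem: matrix inde}. The structural identity driving the whole argument is the resonance $\beta_c x_s = 2$, which forces the naively-linear relation between $h$ and $m$ to degenerate, so the leading balance becomes cubic. This is the mean-field mechanism producing the exponent $1/3$.

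First, from \eqref{mubn12} at $\beta=\beta_c$ one has $h = I'(m) - m\beta_c x/2$. Setting $x = x_s + \delta x$ and using $I'(m) = m + m^3/3 + O(m^5)$, the identity $\beta_c x_s = 2$ cancels the $O(m)$ contribution and yields
\begin{align*}
h(m) = \frac{m^3}{3} - \frac{m \beta_c}{2}\delta x + O(m^5).
\end{align*}
Second, subtract from \eqref{mubn11} the same equation evaluated at the critical point $(0,0,x_s)$, and Taylor expand $J'(x_s+\delta x) - J'(x_s) = J''(x_s)\delta x + O((\delta x)^2)$ and $\log(2\cosh h) - \log 2 = O(h^2)$, together with $I(m) = m^2/2 + O(m^4)$. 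The leading-order balance (anticipating $\delta x = O(m^2)$ and $h=O(m^3)$) is
\begin{align*}
\frac{m^2 x_s \beta_c}{2} - \frac{m^2}{2} - \Bigl(\frac{\beta_c}{2} + J''(x_s)\Bigr)\delta x + O(m^4) = 0.
\end{align*}
Since $x_s \beta_c = 2$, the first two terms combine to $m^2/2$, giving $\delta x = m^2/(\beta_c + 2J''(x_s)) + O(m^4)$.

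Combining the two steps yields $h(m) = C m^3 + O(m^5)$ with $C = 1/3 - \beta_c/(2(\beta_c+2J''(x_s)))$. Using $x_s=2/\beta_c$ one computes $J''(x_s) = 1/x_s + 1/(1-x_s) = \beta_c^2/(2(\beta_c-2))$, which simplifies $C$ to $(\beta_c+2)/(12(\beta_c-1))$. Since $\beta_c = 2 + e\,\alpha(0)/\alpha(1) > 2$, we have $C>0$. Inverting the relation $h = C m^3 + O(m^5)$ by the standard power-series inversion (or implicit function theorem applied to $H(m):=h-Cm^3-\cdots$, noting that $h$ is a strictly increasing odd function of $m$ near $0$ for any fixed $l$) gives $m(\beta_c,\alpha(h,l)) = (h/C)^{1/3}(1+o(1))$ as $h\downarrow 0$, hence $c = C^{-1/3} = (12(\beta_c-1)/(\beta_c+2))^{1/3}\in(0,\infty)$.

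The main obstacle is identifying and correctly exploiting the cancellation $\beta_c x_s = 2$: without this critical tuning the linear-in-$m$ term in $h(m)$ survives and produces the generic exponent $1$, and only the resonance demotes it to cubic, yielding the $1/3$ exponent. A secondary technical point is the smoothness of $x$ as a function of $m$ near the critical point; this holds because the degenerate direction of the Hessian $A_f(\nu_{s,c})$ in Lemmas \ref{lem: matrix inde} and \ref{lem: mean sc indefinite} is the $m$-direction, so the equation \eqref{mubn11} remains non-degenerate in $x$ at $(m,h)=(0,0)$ and $x=x(m,h)$ is a smooth local branch, justifying all the Taylor expansions above.
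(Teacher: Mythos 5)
Your proof is correct, and it takes a somewhat different route from the paper's. The paper disposes of this theorem with a one-line reference to the method of Theorem \ref{thm: critical beta}, i.e.\ working with the closed-form parametrization $\beta(m,\alpha)$ of Theorem \ref{closedsolution} and extracting the limit by repeated L'Hospital applied to explicitly computed (and rather unwieldy) derivatives of the remainder term $\beta^R$. You instead perturb the two stationarity equations \eqref{mubn11}--\eqref{mubn12} around the symmetric critical point $(m,h,x)=(0,0,2/\beta_c)$, invoke the implicit function theorem (justified because the degenerate Hessian direction at criticality is the $m$-direction, so the system stays non-degenerate in $(h,x)$) to get a smooth odd branch $h(m)$ and a smooth even branch $x(m)$, and isolate the resonance $\beta_c x_s=2$ as the mechanism killing the linear term. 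This is cleaner and more informative: it explains \emph{why} the exponent is $1/3$, and it produces the explicit constant $c=\bigl(12(\beta_c-1)/(\beta_c+2)\bigr)^{1/3}$, which correctly degenerates to the pure Curie--Weiss value $3^{1/3}$ as $\beta_c\downarrow 2$ (no holes); I have checked the intermediate computations ($J''(x_s)=\beta_c^2/(2(\beta_c-2))$, $C=(\beta_c+2)/(12(\beta_c-1))>0$) and they are right. The only point worth adding a sentence for is the identification of branches: the quantity $m(\beta_c,\alpha(h,l))$ in the statement is defined via the invertibility of $m\mapsto\beta(m,\alpha(h,l))$ on $(0,1)$ (Lemma \ref{lem: mono beta asym}), so you should note that for small $h>0$ the unique root of $\beta(\cdot,\alpha(h,l))=\beta_c$ must coincide with your implicit-function-theorem branch and in particular tends to $0$ as $h\downarrow 0$; this follows from uniqueness of that root together with the fact that your branch supplies a solution near $m=0$. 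With that remark your argument is complete.
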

 Again we have no explicit formula for $m(\beta_c,h)$ but if we restrict the function $\beta(m,h)$ on $m>0$ it is again invertible and $m(\beta_c,h)$ exists for $h>0$.
 \begin{proof}
 The proof follows by the same idea as above.
 \end{proof}

\subsection{The antiferromagnetic model}
Here we assume that $\beta<0$. The model now attractive and the Hamiltonian favors asymmetric configurations. For the $h\neq0$ case Theorem \ref{closedsolution} is still true and we have for $h>0$ that $\beta(m,\al)$ is negative and monotonically increasing for all $m\in (0,\frac{e^{2h}-1}{e^{2h}+1})$ by Lemma \ref{lem: mono beta asym}. Hence the maximizer in \eqref{equi: sup sym sc} is unique for all $\beta<0$.\\
For  the symmetric model it follows by Theorem \ref{closedsolution} that only 
symmetric maximizers of \eqref{equi: sup sym sc} can exist. But since both functions in equation \eqref{equ: mean-field} are monotonically increasing we cannot say that there exists a unique symmetric solution. \\
Indeed, by using Lemma \ref{lem: pressure2} one get for the pressure of the symmetric model  
\begin{align*}
&p(\beta,\al)=\log\al(0)
+\sup_{0\leq x \leq 1} ( x ( l -\log (2))
- \frac{\beta}{4}x^2 -J(x))
\end{align*}
since the Curie-Weiss pressure is equal $0$ in this case. Define $V(x):=  x (l-\log(2))  - J(x)-  \frac{\beta x^2}{4}$ and by taking the first two derivatives  one get the bifurcation set 
\begin{align*}
B=\{(\b,l), \exists x \in (0,1): V'(x)=V''(x)=0\}.
\end{align*}
The both conditions give $\beta = -\frac{2}{x(1-x)}$ and $l=\log(2)+J'(x)-\frac{1}{1-x}$. Rewriting the latter equation in terms of $\al(0)$ and using the inverse of the repulsion strength one get a parametrisation of $B$ over $x$ by
\begin{align*}
B=\left\{\left(\frac{1}{\beta(x)}, \al_0(x)\right)\,:\, x\in(0,1)\right\}
\end{align*}
with $\frac{1}{\beta(x)}:= -\frac{x(1-x)}{2}$ and $\al_0(x):= \frac{1}{2\exp(J'(x)-\frac{1}{1-x})+1}$. In fig.\ref{fig: antiferro bifur} the blue line is the bifurcation set. Inside of the closed area the rate function has two maximizers and for fixed $\beta$ there exists a value of $\al_\beta(0)$ such that the two maximizers have equal height. This follows by the system of equation $V(x_1)-V(x_2)= 0$ and $V'(x_1)=V'(x_2)=0$ for $x_1\neq x_2$. The corresponding red curve in fig. \ref{fig: antiferro bifur} is called Maxwell-line which is given by the relation $\al_{\beta^{-1}}(0)= (e^{\frac{1}{4\beta^{-1}}}+1)^{-1}$ for $\beta^{-1}\in(-0.125,0)$ which is obtained by the following argument.

The function $V''$ is symmetric around $\frac{1}{2}$ and non-positive for all $x\in (0,1)$ iff $\beta\geq-8$. By the second property more than one maximizer may only exist if $\beta<-8$. Fix $\beta<-8$. By the symmetry of  $V''$ it has a primitive $f$ 
with $f(x)=-f(1-x)$. Clearly $V'$ is a primitive of $V''$ and depends only linearly on $l\in \R$. 
Hence there exists a $l_c$ such that $V'_{l_c}=f$. 
We will show that the value of $l_c$ defines the Maxwell-line where the two minima have equal 
depth.  
Note, by the choice of $l_c$,  $V_{l_c}$ is symmetric around $\frac{1}{2}$. 
 Since $V''$ has precisely two roots for $\beta<-8$ there exists a pair $x_1\neq x_2$ with $V_{l_c}(x_1)-V_{l_c}(x_2)= 0$ and $V'_{l_c}(x_1)=V'_{l_c}(x_2)=0$. To get the Maxwell-line we see that  the symmetry equation $V_{l_c}'(x)+V'_{l_c}(1-x) =0$ holds for all $x$ 
iff $ 0=  2l_c - \frac{\beta}{2}$,  and by the general definition of $l$ this equivalent to $\al_c(0)= (e^{\frac{\beta}{4}}+1)^{-1}$.
\begin{figure}[ht]
	\centering
	\includegraphics[width=0.33\textwidth]{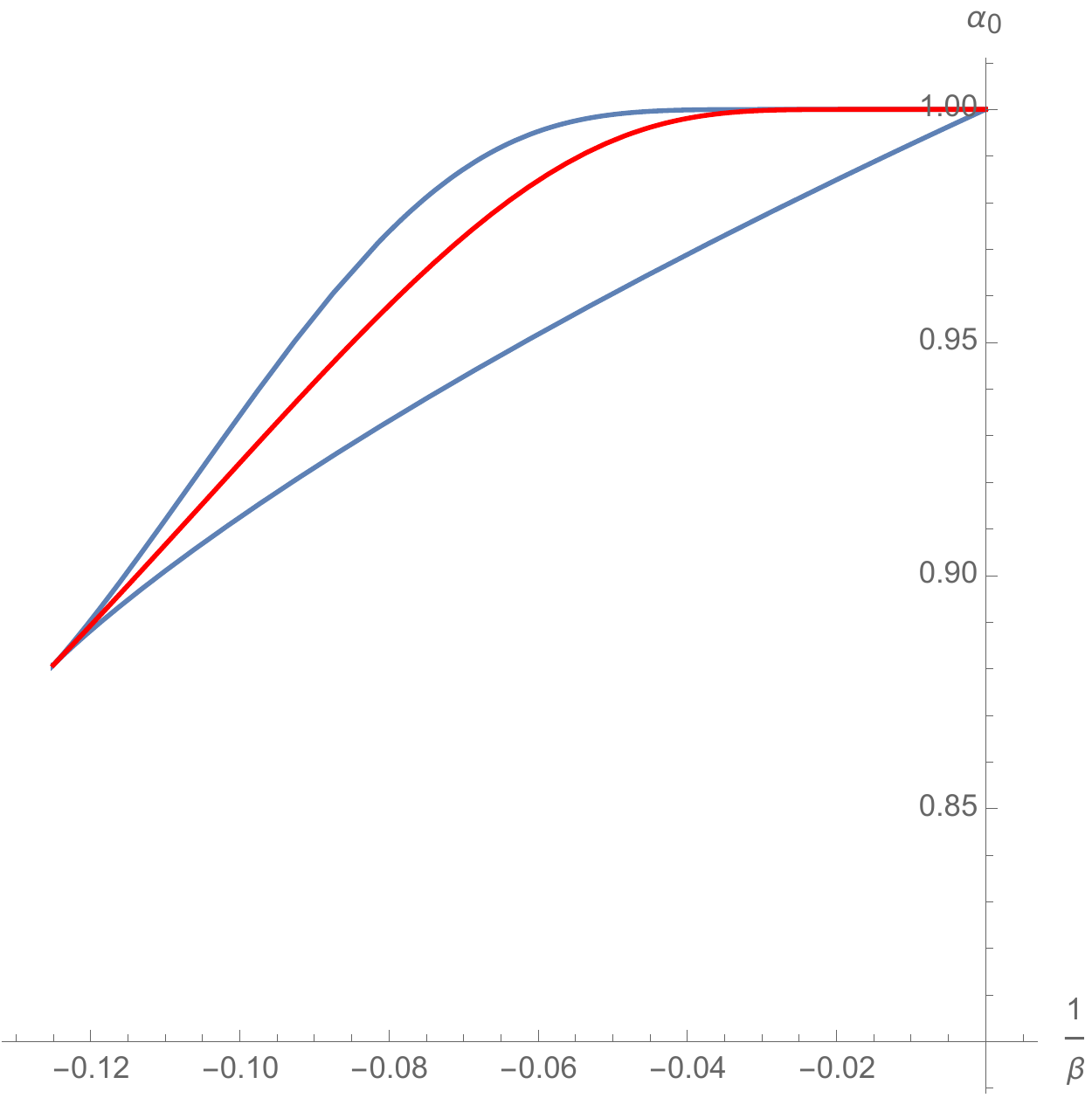}
	\caption{Bifurcation set}
	\label{fig: antiferro bifur}
\end{figure}

\section{Time evolution}
\subsection{Proofs for the main results}
In this part we give proofs for the dynamical model where will we use results from \cite{kuelske-le07}. There the authors investigated the Curie-Weiss model under stochastic time evolution via spin-flip. The core of their method was the usage of the so called constrained first-layer model which is a measure at time $0$ with a constraint coming from time $t$. We will use a similar approach and for this we need the next lemma. In the following the a priori measure $\al$ will always be symmetric and since its particular form has no effect on the results we will not mention it any more.
\begin{lemma}
	Let $\beta>0$ and $t>0$. Then the conditional probability of the time evolved measure can be written as
	\begin{align*}
	&\mu_{\beta, t, N}(\eta_1|\eta _{[2,N]}) \\
	&=\frac{\sum_{\omega_{[2,N]}}
		\phi_1(\eta_1, \frac{1}{N}\sum_{2\leq j \leq N} \delta_{\omega_j})
		\exp\Bigl(-\frac{\beta}{2 N}\sum_{2\leq i, j \leq N} 1_{\omega_i\omega_j=-1}\Bigr)\prod_{i=2}^N \al(\omega_i)p_t(\omega_i,\eta_i)}{\sum_{\omega_{[2,N]}} \phi_2( \frac{1}{N}\sum_{2\leq j \leq N} \delta_{\omega_j})
		\exp\Bigl(-\frac{\beta}{2 N}\sum_{2\leq i,j \leq N} 1_{\omega_i\omega_j=-1}\Bigr)\prod_{i=2}^N \al(\omega_i)p_t(\omega_i,\eta_i)}
	\end{align*}
	with 
	\begin{align*}
	\phi_1\left(\eta_1,\nu\right):&=\sum_{\omega_1\in E}
	e^{-\beta \left( \nu(1)\mathds{1}_{\omega_1=-1}+\nu(-1)\mathds{1}_{\omega_1=1}\right)}\al(\omega_1)p_t(\omega_1,\eta_1)\\
	\end{align*} 
and $\phi_2\left(\nu\right):=\sum_{\eta_1\in E}  \phi_1\left(\eta_1,\nu\right)$
	for positive measures $\nu\in\mathcal{M}^+(E)$. 
\end{lemma}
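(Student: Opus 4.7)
The plan is to prove the identity by a direct computation starting from the definitions, isolating the dependence on the first spin in the Hamiltonian and recognizing the resulting partial sum as $\phi_1$.

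First I would write
\begin{equation*}
\mu_{\beta,t,N}(\eta_1 \vert \eta_{[2,N]}) = \frac{\mu_{\beta,t,N}(\eta_{[1,N]})}{\sum_{\eta_1 \in E}\mu_{\beta,t,N}(\eta_{[1,N]})}
\end{equation*}
and expand the numerator using the definition
\begin{equation*}
\mu_{\beta,t,N}(\eta_{[1,N]}) = \frac{1}{Z_{N,\beta,\al}} \sum_{\omega_{[1,N]} \in \Omega_N} e^{-\frac{\beta}{2N}\sum_{1\leq i,j \leq N}\mathds{1}(\omega_i\omega_j = -1)}\prod_{i=1}^N \al(\omega_i) p_t(\omega_i,\eta_i).
\end{equation*}
The partition function $Z_{N,\beta,\al}$ cancels between numerator and denominator, so I can work throughout with unnormalized expressions.

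The key algebraic step is to peel off $\omega_1$ from the double sum in the exponent. Using the symmetry of $\mathds{1}(\omega_i\omega_j=-1)$ in $i,j$ and the vanishing of the diagonal ($\omega_i^2 \neq -1$),
\begin{equation*}
\sum_{1\leq i,j \leq N}\mathds{1}(\omega_i\omega_j=-1) = 2\sum_{j=2}^N \mathds{1}(\omega_1\omega_j=-1) + \sum_{2\leq i,j \leq N}\mathds{1}(\omega_i\omega_j=-1).
\end{equation*}
Decomposing $\mathds{1}(\omega_1\omega_j=-1) = \mathds{1}_{\omega_1=1}\mathds{1}_{\omega_j=-1} + \mathds{1}_{\omega_1=-1}\mathds{1}_{\omega_j=1}$ and introducing $\nu := \frac{1}{N}\sum_{j=2}^N \delta_{\omega_j}$, so that $\frac{1}{N}\sum_{j=2}^N \mathds{1}(\omega_j=k) = \nu(k)$, the cross term contributes the factor $\exp\bigl(-\beta[\nu(1)\mathds{1}_{\omega_1=-1} + \nu(-1)\mathds{1}_{\omega_1=1}]\bigr)$ to the Boltzmann weight.

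Next I would separate the outer sum as $\sum_{\omega_{[1,N]}} = \sum_{\omega_{[2,N]}}\sum_{\omega_1\in E}$. The inner sum over $\omega_1$ acts only on the $\omega_1$-dependent exponential together with $\al(\omega_1)p_t(\omega_1,\eta_1)$, and by the previous step this is exactly $\phi_1(\eta_1,\nu)$. What remains outside is the purely $\omega_{[2,N]}$-dependent Hamiltonian, the product $\prod_{i=2}^N \al(\omega_i)p_t(\omega_i,\eta_i)$, and the outer sum over $\omega_{[2,N]}$, which yields the numerator in the claim. For the denominator, summing over $\eta_1 \in E$ commutes with $\sum_{\omega_{[2,N]}}$ and affects only the factor $\phi_1(\eta_1,\nu)$, producing $\phi_2(\nu)$ by definition.

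There is no genuine obstacle: the computation is entirely elementary bookkeeping. The only point worth noting is that $\nu$ has total mass $(N-1)/N$ rather than $1$, which is why $\phi_1$ and $\phi_2$ are defined on the cone $\mathcal{M}^+(E)$ of positive measures rather than on probability measures; this will be convenient later when passing to the limit $N\to\infty$, where the mass of $\nu$ tends to $1$.
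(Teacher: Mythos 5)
Your proposal is correct and follows essentially the same route as the paper: write the conditional probability as a ratio, split the double sum in the Hamiltonian into the terms involving index $1$ and the remainder, and recognize the resulting $\omega_1$-sum as $\phi_1$ (and its $\eta_1$-sum as $\phi_2$). Your remark that $\nu=\frac{1}{N}\sum_{j=2}^N\delta_{\omega_j}$ has mass $(N-1)/N$, explaining why $\phi_1,\phi_2$ are defined on $\mathcal{M}^+(E)$, is a correct and useful observation that the paper leaves implicit.
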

\begin{proof}
	Since the state space $E$ is finite we have 
	\begin{align*}
	\mu_{\beta, t, N}(\eta_1|\eta _{[2,N]}) = \frac{\mu_{\beta, t, N}(\eta_1\eta _{[2,N]})}{\mu_{\beta, t, N}(\eta _{[2,N]})}
	\end{align*}
	With the splitting 
	\begin{align*}
	\frac{\beta}{2 N}\sum_{1\leq i, j \leq N} 1_{\omega_i\omega_j=-1} = \frac{\beta}{2 N}\sum_{1\leq  i,j \leq N, \min\{i,j\} =1} \hspace{-0.5cm}1_{\omega_i\omega_j=-1}+\frac{\beta}{2 N}\sum_{2\leq i, j \leq N} 1_{\omega_i\omega_j=-1}
	\end{align*} 
	and definitions of $\phi_1$ and $\phi_2$ one can get the desired representation.
\end{proof}
Another way to write  $\phi_1$ for $\vert \eta_1 \vert =1$ which will be useful later is $\tilde\phi_1(\eta_1, \frac{1}{N-1} \sum_{2\leq j \leq N} \omega_j)$ where 
\begin{align*}
\tilde\phi_{1,n}(\eta_1, m)=\al(1)e^{-\beta\frac{N-1}{N} }\left(\eta_1e^{-2t}\sinh\Bigl(\beta\frac{N-1}{2N}
m\Bigr)+\cosh\Bigl(\beta\frac{N-1}{2N}
m\Bigr)\right)
\end{align*}
and for $\eta_1=0$ we define  $	\tilde\phi_1(0, m) = \al(0)$.\\
If we expand the fraction by 
\begin{align*}
\sum_{\omega_{[2,N]}}
\exp\Bigl(-\frac{\beta}{2 N}\sum_{2\leq i, j \leq N} 1_{\omega_i\omega_j=-1}\Bigr)\prod_{i=2}^N \al(\omega_i)p_t(\omega_i,\eta_i)
\end{align*}
one can see that the constrained first-layer model appears which will be defined now.
\begin{definition}
	Let $\beta>0$, $t>0$ and $\eta_{[1,N]} \in \Omega_N$. Then the constrained first-layer model with constraint $ \eta_{[1,N]} $ is defined by
	\begin{align*}
	\mu_{\beta, t, N}[\eta _{[1,N]}](\omega_{[1,N]})
	=		 \frac{	\exp\Bigl(-\frac{\beta}{2 N}\sum_{1\leq i, j \leq N} 1_{\omega_i\omega_j=-1}\Bigr)\prod_{i=1}^N \al(\omega_i)p_t(\omega_i,\eta_i)}{\sum_{\tilde\omega_{N}} \exp\Bigl(-\frac{\beta}{2 N}\sum_{1\leq i, j \leq N} 1_{\tilde\omega_i\tilde\omega_j=-1}\Bigr)\prod_{i=1}^N \al(\tilde\omega_i)p_t(\omega_i,\eta_i)}
	\end{align*}
	for $\omega_{[1,N]}\in \Omega_N$.
\end{definition}
This definition allows us to write for the conditional probability that
\begin{align*}
\mu_{\beta, t, N}(\eta_1|\eta _{[2,N]}) = \frac{\mu_{\beta, t, N}[\eta _{[2,N]}](\phi_1(\eta_1,\cdot))}{\mu_{\beta, t, N}[\eta _{[2,N]}](\phi_2(\eta_1,\cdot))}.
\end{align*}
The property of the transition kernel that no particle can be created or erased over time can also be expressed in terms of the set of occupied sites $S(\eta)=\{i \,:\, \vert \eta_i\vert=1 \}$. Define a new transition kernel  $$\tilde{p}_t(a,b):=\frac{1}{2}(1+e^{-2t})\mathds{1}_{a=b}+\frac{1}{2}(1-e^{-2t})\mathds{1}_{a\neq b }$$
but only for $a,b\in \{-1,1\}$. Then one can write for $\omega_N,\eta_N \in \Omega_N$ that
\begin{align}\label{equi: pt pt tilde}
\prod_{i=1}^N p_t(\omega_i,\eta_i) = \mathds{1}_{S(\omega_N)=S(\eta_N)} \prod_{i\in S(\eta_N)}\tilde{p}_t(\omega_i,\eta_i).
\end{align}
and for $a,b \in \{-1,1\}$
 \begin{align}\label{equi: transition kernel}
 \tilde{p}_t(a,b)=\frac{e^{ ab h_t}}{2 \cosh  h_t}, \; \; {\rm
 	with} \; \quad
 h_t=\frac{1}{2}\log\frac{1+e^{-2 t}}{1-e^{-2 t}}.
 \end{align}
With this relation we get the following lemma concerning the constrained first-layer model.
\begin{lemma}
	Let $\beta>0$, $t>0$ and $\eta_N\in \Omega_N$. Then we have
	\begin{align*}
	&\mu_{\beta, t, N}[\eta _{N}](\omega_{N}) \\
	&= \frac{1_{S(\omega_{N})=S(\eta_{N})}
		\exp\Bigl(\frac{\beta(\eta)}{4 |S(\eta)|}\sum_{ i,j \in S(\eta)} \omega_i\omega_j+ h_t \sum_{i\in S(\eta)}\omega_i \eta_i\Bigr)}{\sum_{\tilde{\omega}_N\in \Omega_N}1_{S(\tilde\omega_{N})=S(\eta_{N})}
		\exp\Bigl(\frac{\beta(\eta)}{4 |S(\eta)|}\sum_{ i,j \in S(\eta)} \tilde\omega_i\tilde\omega_j+ h_t \sum_{i\in S(\eta)}\tilde\omega_i \eta_i\Bigr)}\cr
	\end{align*}
	where $\beta(\eta):=\beta \frac{\vert S(\eta)\vert}{N}$. The restriction of $\mu_{\beta, t, N}[\eta _{N}]$ to $S(\eta)$ is a Curie-Weiss model on $\{-1,1\}^{S(\eta)}$.
\end{lemma}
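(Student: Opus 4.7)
The plan is a direct computation from the definition of $\mu_{\beta,t,N}[\eta_N](\omega_N)$: I will isolate every factor in the numerator and denominator that does not depend on $\omega$ and cancel it, so that only the stated expression survives.

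The first step is to apply \eqref{equi: pt pt tilde} to the transition-kernel product, which forces the indicator $\mathds{1}_{S(\omega)=S(\eta)}$ in both numerator and denominator and replaces $\prod_{i=1}^N p_t(\omega_i,\eta_i)$ by $\mathds{1}_{S(\omega)=S(\eta)}\prod_{i\in S(\eta)}\tilde p_t(\omega_i,\eta_i)$. On the event $\{S(\omega)=S(\eta)\}$ one has $\omega_i\in\{-1,+1\}$ for $i\in S(\eta)$ and $\omega_i=0$ otherwise, so the elementary identity $\mathds{1}(\omega_i\omega_j=-1)=(1-\omega_i\omega_j)/2$ applies exactly for $i,j\in S(\eta)$, giving
\begin{equation*}
-\frac{\beta}{2N}\sum_{1\leq i,j\leq N}\mathds{1}(\omega_i\omega_j=-1) \;=\; -\frac{\beta |S(\eta)|^2}{4N} \;+\; \frac{\beta}{4N}\sum_{i,j\in S(\eta)}\omega_i\omega_j,
\end{equation*}
whose first summand is $\omega$-independent and cancels in the ratio. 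Using \eqref{equi: transition kernel}, $\tilde p_t(\omega_i,\eta_i)=e^{h_t\omega_i\eta_i}/(2\cosh h_t)$, so that $\prod_{i\in S(\eta)}\tilde p_t(\omega_i,\eta_i)$ equals $\exp(h_t\sum_{i\in S(\eta)}\omega_i\eta_i)$ up to the $\omega$-independent factor $(2\cosh h_t)^{-|S(\eta)|}$; similarly $\prod_{i=1}^N\alpha(\omega_i)=\alpha(1)^{|S(\eta)|}\alpha(0)^{N-|S(\eta)|}$ on $\{S(\omega)=S(\eta)\}$ by symmetry of $\alpha$, hence again $\omega$-independent.

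Collecting all terms, every $\omega$-independent factor cancels between numerator and denominator, and since by definition $\beta/N=\beta(\eta)/|S(\eta)|$ the surviving exponent is exactly
\begin{equation*}
\frac{\beta(\eta)}{4|S(\eta)|}\sum_{i,j\in S(\eta)}\omega_i\omega_j\;+\;h_t\sum_{i\in S(\eta)}\omega_i\eta_i,
\end{equation*}
which is the displayed formula. For the final sentence of the lemma I would simply note that this exponent depends on $\omega$ only through the $\{-1,+1\}$-valued variables $\omega_i$, $i\in S(\eta)$, and is the standard mean-field Ising Hamiltonian on $|S(\eta)|$ sites at inverse temperature $\beta(\eta)$ with the site-dependent external field $h_t\eta_i$. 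No substantial obstacle is expected; the only care required is to handle the boundary case $\omega_i=0$ correctly when rewriting $\mathds{1}(\omega_i\omega_j=-1)$, and to bookkeep the constants coming from $\tilde p_t$ and from the symmetric $\alpha$ so that they genuinely cancel.
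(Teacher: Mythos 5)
Your proposal is correct and follows essentially the same route as the paper's proof: restrict to $S(\omega)=S(\eta)$ via \eqref{equi: pt pt tilde}, pull out the $\omega$-independent factors $\al(0)^{N-|S(\eta)|}\al(1)^{|S(\eta)|}$ (using $\al(1)=\al(-1)$) and $(2\cosh h_t)^{-|S(\eta)|}$, and rewrite the Hamiltonian with $\mathds{1}(\omega_i\omega_j=-1)=(1-\omega_i\omega_j)/2$ on the occupied sites. The only cosmetic difference is that the paper phrases the cancellation by integrating a bounded test function $f$ rather than manipulating the probability mass directly, which changes nothing of substance.
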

\begin{proof}
	Take some bounded function $f:\Omega_N\rightarrow \R$. Then it follows by $\al(1)=\al(-1)$ and \eqref{equi: pt pt tilde} that
	\begin{align*}
	&\mu_{\beta, t, N}[\eta _{N}](f)\\
	&=		 \frac{	\sum_{\omega_{N}} f(\omega_N	)\exp\Bigl(-\frac{\beta}{2 N}\sum_{1\leq i, j \leq N} 1_{\omega_i\omega_j=-1}\Bigr)\al(0)^{ N-\vert S(\eta_N) \vert} \al(1)^{\vert S(\eta_n)\vert}\mathds{1}_{S(\omega_N)=S(\eta_N)} \prod_{i\in S(\eta_N)}\tilde{p}_t(\omega_i,\eta_i)}{\sum_{\omega_{N}} \exp\Bigl(-\frac{\beta}{2 N}\sum_{1\leq i, j \leq N} 1_{\omega_i\omega_j=-1}\Bigr) \al(0)^{ N-\vert S(\eta_N) \vert} \al(1)^{\vert S(\eta_n)\vert} \mathds{1}_{S(\omega_N)=S(\eta_N)} \prod_{i\in S(\eta_N)}\tilde{p}_t(\omega_i,\eta_i)}\\
	&=		 \frac{	\sum_{\omega_{S(\eta_N)}} f(\omega_{S(\eta_N)}0_{S(\eta_N)^c}	)\exp\Bigl(-\frac{\beta}{2 N}\sum_{ i, j \in S(\eta_N)} 1_{\omega_i\omega_j=-1}\Bigr) \prod_{i\in S(\eta_N)}\tilde{p}_t(\omega_i,\eta_i)}{\sum_{\omega_{S(\eta_N)}} \exp\Bigl(-\frac{\beta}{2 N}\sum_{ i, j \in S(\eta_N)} 1_{\omega_i\omega_j=-1}\Bigr)   \prod_{i\in S(\eta_N)}\tilde{p}_t(\omega_i,\eta_i)}.
	\end{align*}
	For $\tilde{p}_t$ we can use the characterisation  \eqref{equi: transition kernel}. The $\cosh$ term will cancel out and with $1_{\omega_i\omega_j=-1}=-1/2(\omega_i\omega_j-1)$ the measure can be written as
	\begin{align*}
	&\mu_{\beta, t, N}[\eta _{N}](f)= 		 \frac{	\sum_{\omega_{S(\eta_N)}} f(\omega_{S(\eta_N)}0_{S(\eta_N)^c}	)\exp\Bigl(\frac{\beta(\eta)}{4 |S(\eta)|}\sum_{ i,j \in S(\eta)} \omega_i\omega_j+ h_t \sum_{i\in S(\eta)}\omega_i \eta_i\Bigr)}{\sum_{\omega_{S(\eta_N)}}\exp\Bigl(\frac{\beta(\eta)}{4 |S(\eta)|}\sum_{ i,j \in S(\eta)} \omega_i\omega_j+ h_t \sum_{i\in S(\eta)}\omega_i \eta_i\Bigr)} 
	\end{align*}
	which is the desired representation.
\end{proof}
To find a nice representation of in terms of the constrained first-layer model let us  considers 
ratios of the conditional probabilities for different $\eta_1$
\begin{align*}
\frac{\mu_{\beta, t, N}(\bar\eta_1|\eta_{[2,N]})}{\mu_{\beta, t, N}(\eta'_1|\eta_{[2,N]})}
&= \frac{\sum_{\omega_{[2,N]}}
	\phi_1(\bar\eta_1, \frac{1}{N}\sum_{2\leq j \leq N} \delta_{\omega_j})
	\exp\Bigl(-\frac{\beta}{2 N}\sum_{2\leq i, j \leq N} 1_{\omega_i\omega_j=-1}\Bigr)\prod_{i=2}^N \al(\omega_i)p_t(\omega_i,\eta_i)}{\sum_{\omega_{[2,N]}}
	\phi_1(\eta'_1, \frac{1}{N}\sum_{2\leq j \leq N} \delta_{\omega_j})
	\exp\Bigl(-\frac{\beta}{2 N}\sum_{2\leq i, j \leq N} 1_{\omega_i\omega_j=-1}\Bigr)\prod_{i=2}^N \al(\omega_i)p_t(\omega_i,\eta_i)}\\
&= \frac{\mu_{\beta,t,N}[\eta_{[2,N]}](\phi_1(\bar\eta_1, \frac{1}{N}\sum_{2\leq j \leq N} \delta_{\omega_j}))}{\mu_{\beta,t,N}[\eta_{[2,N]}](\phi_1(\eta'_1, \frac{1}{N}\sum_{2\leq j \leq N} \delta_{\omega_j}))}.
\end{align*}
Indeed, by this we get the nice representation
\begin{align*}
\mu_{\beta, t, N}(\eta_1|\eta_{[2,N]}) = \frac{\mu_{\beta,t,N}[\eta_{[2,N]}](\phi_1(\eta_1, \frac{1}{N}\sum_{2\leq j \leq N} \delta_{\omega_j}))}{\sum_{\bar\eta_1\in\{-1,0,1\}}\mu_{\beta,t,N}[\eta_{[2,N]}](\phi_1(\bar\eta_1, \frac{1}{N}\sum_{2\leq j \leq N} \delta_{\omega_j}))}.
\end{align*}
and, since $\phi_1(0_1, \frac{1}{N}\sum_{2\leq j \leq N} \delta_{\omega_j}) =\al(0),$ we have
\begin{align}\label{equi: meas rep }
\mu_{\beta, t, N}(\eta_1|\eta_{[2,N]}) = \frac{\mu_{\beta,t,N}[\eta_{[2,N]}](\phi_1(\eta_1, \frac{1}{N}\sum_{2\leq j \leq N} \delta_{\omega_j}))}{\al(0)+	\sum_{\bar\eta_1\in\{-1,1\}}\mu_{\beta,t,N}[\eta_{[2,N]}](\phi_1(\bar\eta_1, \frac{1}{N}\sum_{2\leq j \leq N} \delta_{\omega_j}))}.
\end{align}
The convergence of the single-site conditional 
probabilities of $\mu_{\beta,t,N}$ appearing on the l.h.s. of the last equation, 
in the sense of Definition \ref{def: seqG}, is now completely determined by the convergence of $\mu_{\beta,t,N}[\eta_{[2,N]}]$, 
as  the empirical distribution of $\eta_{[2,N]}$ converges to some $\al_f\in\mathcal{M}_1(E)$. Note that, under this limit, the corresponding final magnetization 
on the occupied sites 
$\frac{1}{\vert S(\eta_{[2,N]})\vert}\sum_{i\in S(\eta_{[2,N]})}\eta_i$ converges to $\frac{\al_f(1)-\al_f(-1)}{\al_f(1)+\al_f(-1)}$. 

%
Let $\tilde\eta$ be a random variable with mean $\a_f^*$ and $\tilde{\beta}= \frac{\beta}{2}\al_f(\{-1,1\})$.  Together with the Hubbard-Stratonovich analysis which was carried out in detail in \cite{kuelske-le07} this implies that if the function 
\begin{align*}
\phi_{\tilde\beta,t,\al_f^*}(m)= \frac{m^2}{2}- \frac{1}{\tilde{\beta}} \E_{\al_f^*}\left(\log \cosh\left(\tilde\beta\left(m+\frac{h_t}{\tilde\beta}\tilde{\eta}\right)\right)\right)
\end{align*}
has a unique minimizer $m^*$ then under $\mu_{\beta, t, N}[\eta _{N}]$ the empirical magnetization $\frac{1}{\vert S(\eta_N)\vert}\sum_{i\in S(\eta_n)} \omega_i$ converges to this minimizer $m^*$. As a consequence, we obtain the following lemma. 
\begin{lemma}\label{lem: lim first layer}
	Let $\beta>0$ and $t>0$. Assume that $\phi_{\tilde\beta,t,\al_f^*}$ has a unique global minimizer $m^*$. Then it follows that
	\begin{align*}
	\mu_{\beta,t,N}[\eta_{[2,N]}]\Bigl(\phi_{1,N}\bigl(\eta_1, \frac{1}{N-1} \sum_{2\leq j \leq N} \omega_j\bigr) \bigr) \rightarrow \tilde \phi(\eta_1,\al_f(\{-1,1\})m^*)
	\end{align*}
	where $\tilde\phi_1(\eta_1,m) = \al(1)e^{-\beta}(\eta_1e^{-2t}\sinh(\frac{\beta}{2}m)+\cosh(\frac{\beta}{2}m))$ for $\vert \eta_1\vert=1$ and $\tilde{\phi}(0,m)= \al(0)$.
\end{lemma}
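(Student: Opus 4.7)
The plan is to transfer the concentration of the empirical magnetization in the Curie-Weiss Ising model with random field, obtained in \cite{kuelske-le07} through a Hubbard--Stratonovich/Laplace analysis, to the constrained first-layer model restricted to the occupied sites $S(\eta)$.

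First I would observe that, by the preceding lemma, the marginal of $\mu_{\beta,t,N}[\eta_{[2,N]}]$ on $S(\eta):=S(\eta_{[2,N]})$ is the Curie-Weiss Ising model at inverse temperature $\beta(\eta)/2=\tfrac{\beta}{2}\tfrac{|S(\eta)|}{N}$ with site-dependent external fields $h_t\eta_i$, while $\omega_i=0$ is enforced on the complement. Consequently
$$\frac{1}{N-1}\sum_{j=2}^N\omega_j=\frac{|S(\eta)|}{N-1}\cdot m_N(\omega),\qquad m_N(\omega):=\frac{1}{|S(\eta)|}\sum_{i\in S(\eta)}\omega_i,$$
and under the convergence of the empirical measure of $\eta_{[2,N]}$ to $\alpha_f$, the prefactor tends to $\alpha_f(\{-1,1\})$. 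So it suffices to prove $m_N\to m^*$ in probability under $\mu_{\beta,t,N}[\eta_{[2,N]}]$.

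For this step I would reproduce the Hubbard--Stratonovich transformation of \cite{kuelske-le07}: after inserting a Gaussian integral in the quadratic term and summing out the independent $\omega_i$'s, the law of $m_N$ acquires a density proportional to $\exp(-|S(\eta)|\psi_N(m))$ with
$$\psi_N(m)=\frac{\tilde\beta_N}{2}m^2-\frac{1}{|S(\eta)|}\sum_{i\in S(\eta)}\log\cosh\bigl(\tilde\beta_N m+h_t\eta_i\bigr)+\text{const.},$$
where $\tilde\beta_N=\beta|S(\eta)|/(2N)\to\tilde\beta=\tfrac{\beta}{2}\alpha_f(\{-1,1\})$ and the empirical distribution of $\{\eta_i\}_{i\in S(\eta)}$ on $\{-1,1\}$ converges to the Bernoulli-type law with mean $\alpha_f^*$. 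Hence $\psi_N$ converges uniformly on $[-1,1]$ to a constant plus $\phi_{\tilde\beta,t,\alpha_f^*}(m)$, and the assumed uniqueness of the minimizer $m^*$ yields $m_N\to m^*$ in probability by a standard Laplace argument.

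Finally, since $\tilde\phi_{1,N}(\eta_1,\cdot)$ is uniformly bounded in $N$ and converges uniformly on $[-1,1]$ to the continuous function $\tilde\phi_1(\eta_1,\cdot)$, bounded convergence combined with the previous step gives the claimed limit $\tilde\phi_1(\eta_1,\alpha_f(\{-1,1\})m^*)$. The main technical input is the Laplace step; however, since the inverse temperature and the external-field distribution depend on $N$ only through quantities that converge to deterministic limits, I expect only cosmetic modifications to the argument of \cite{kuelske-le07}, essentially verifying robustness of the saddle-point method under this mild $N$-dependence of the parameters.
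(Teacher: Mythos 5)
Your proposal is correct and follows essentially the same route as the paper: identify the constrained first-layer model as a Curie--Weiss Ising model on the occupied sites (with effective inverse temperature $\tilde\beta_N\to\tilde\beta$ and fields $h_t\eta_i$), invoke the Hubbard--Stratonovich concentration of \cite{kuelske-le07} to get $m_N\to m^*$, and combine this with the uniform convergence of $\tilde\phi_{1,N}(\eta_1,\cdot)$ to $\tilde\phi_1(\eta_1,\cdot)$ to pass to the limit. The only difference is cosmetic: you spell out the $N$-dependent rate function $\psi_N$ explicitly, whereas the paper simply cites the reference for that step.
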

\begin{proof}
	First we prove that $\sup_{m\in (0,1) } \vert \tilde{\phi}_{1,N}(\eta_1,m) - \tilde{\phi}_{1}(\eta_1,m) \vert \rightarrow 0$. 
	It is clear  
	that $\tilde{\phi}_{1,n}(\eta_1,m)\rightarrow \tilde{\phi}_{1}(\eta_1,m)$ point-wise.
		Note that we only have to check the uniform convergence for $\vert\eta_1\vert = 1$, and in this case we have 
	\begin{align*}
	\vert \tilde{\phi}_{1,N}(\eta_1,m) - \tilde{\phi}_{1}(\eta_1,m) \vert=\Bigl\vert& \sum_{\omega_1\in\{-1,1\}} e^{\beta (-\frac{N-1}{N}+\frac{N-1}{2N}
	m \omega_1)}\al(\omega_1)\tilde{p}_t(\omega_1,\eta_1) \\
	& - e^{\beta (-1+\frac{1}{2}\omega_1m)}\al(\omega_1)\tilde{p}_t(\omega_1,\eta_1)\Bigr\vert.
	\end{align*}
	Since $\tilde{p}_t$ is positive we can lift it into the exponential. Using the  local Lipschitz property of the exponential function and point-wise convergence there exists some positive $K$ such that for large $N$ it follows that the above difference is bounded by $\sum_{\omega_1} K\beta(\vert 1- \frac{N-1}{N}\vert+\vert m \vert\vert\frac{N-1}{2N} 
	\omega_1-\frac{1}{2}\omega_1\vert)$. 
	The boundedness of $m$  implies the uniform convergence. \\
	The rest of the proof is to show that  $\frac{1}{N-1} \sum_{2\leq j\leq N} \omega_j$ converges against $m^*$ under $\mu_{\beta,t,N}[\eta_{[2,N]}]$. We have
	\begin{align*}
	&\vert	\mu_{\beta,t,N}[\eta_{[2,N]}](\tilde\phi_{1,N}(\eta_1, \frac{1}{N-1} \sum_{2\leq j \leq N} \omega_j) ) - \tilde \phi_1(\eta_1,\a_f(\{-1,1\})m^*) \vert\\
	&\leq\vert \mu_{\beta,t,N}[\eta_{[2,N]}](\tilde\phi_{1,N}(\eta_1, \frac{1}{N-1} \sum_{2\leq j \leq N} \omega_j) -\tilde\phi_1(\eta_1, \frac{1}{N-1} \sum_{2\leq j \leq N} \omega_j))\vert\\
	&+\vert \mu_{\beta,t,N}[\eta_{[2,N]}](\tilde\phi_{1}(\eta_1, \frac{1}{N-1} \sum_{2\leq j \leq N} \omega_j) - \tilde \phi_1(\eta_1,\a_f(\{-1,1\})m^*)\vert
	\end{align*}
	The first summand converges against $0$ by the proven uniform convergence. For the second one this follows by \cite{kuelske-le07} and the fact that $\mu_{\beta,t,N}[\eta_{[2,N]}]$ is a Curie-Weiss model on $S(\eta_{[2,N]})$ and $\frac{1}{N-1} \sum_{2\leq j\leq N} \omega_j =\frac{S(\eta_{[2,N]})}{N-1} \frac{1}{S(\eta_{[2,N]})} \sum_{j \in S(\eta_{[2,N]}) } \omega_j $ under $\mu_{\beta,t,N}[\eta_{[2,N]}]$.
\end{proof}
The next lemma is the last ingredient to prove Theorem \ref{thm: time evolved}.

\begin{lemma}\label{lem: limit exists}
	Let $\beta>0$, $t>0$, $\al,\al_f\in\mathcal{M}_1(E)$ with $\al(+)=\al(-)>0$ and let $(\eta_{[2,N]})_{N\geq 2}$ a sequence with $\lim_{N\rightarrow \infty} \sum_{i=2}^N \delta_{\eta_i}= \al_f$. Furthermore assume that the function $\phi_{\tilde \beta,t,\al_f^*}$ has a unique global minimizer $m^*:=m_{\tilde\beta,t,\al_f^*}^*$ at the effective inverse temperature 
$\tilde{\beta}= \frac{\beta}{2}\al_f(\{-1,1\})$. 	
		Then it follows that $\lim_{N\rightarrow\infty}	\mu_{\beta, t, N}(\eta_1|\eta_{[2,N]}) =\g_{\b,\al,t}(\eta_1 | \al_f)$ exists 
		and is independent of the choice of the sequence, with limiting kernel given by  
	\begin{align*}
	\g_{\b,\al,t}(\eta_1 | \al_f)
	=\frac{\al(0)\mathds{1}_{\eta_1=0}+\al(1)e^{-\beta}(\eta_1e^{-2t}\sinh(\frac{\beta\al_f(\{-1,1\})m^*}{2})+\cosh(\frac{\beta\al_f(\{-1,1\})m^*}{2}))\mathds{1}_{\vert \eta_1\vert=1}}{\al(0)+	2 \al(1) e^{-\beta} \cosh(\frac{\beta\al_f(\{-1,1\})m^*}{2})} 
	\end{align*}
\end{lemma}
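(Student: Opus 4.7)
The plan is to pass to the limit in the ratio representation \eqref{equi: meas rep } on a term-by-term basis, using Lemma \ref{lem: lim first layer} as the essential input. That representation writes
\[
\mu_{\beta, t, N}(\eta_1 | \eta_{[2,N]}) = \frac{\mu_{\beta,t,N}[\eta_{[2,N]}]\bigl(\phi_1(\eta_1, \tfrac{1}{N}\sum_{2 \le j \le N}\delta_{\omega_j})\bigr)}{\al(0) + \sum_{\bar\eta_1 \in \{-1,1\}}\mu_{\beta,t,N}[\eta_{[2,N]}]\bigl(\phi_1(\bar\eta_1, \tfrac{1}{N}\sum_{2 \le j \le N}\delta_{\omega_j})\bigr)},
\]
so it suffices to identify the limits of the expectations appearing in numerator and denominator separately.

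For the numerator, if $\eta_1 = 0$ then $\phi_1(0, \cdot) = \al(0)$ is constant and no limit argument is needed. If $|\eta_1| = 1$, I would invoke the alternative form $\tilde{\phi}_{1,N}(\eta_1, \frac{1}{N-1}\sum_{2 \le j \le N}\omega_j)$ of $\phi_1(\eta_1, \cdot)$ introduced in the excerpt, and apply Lemma \ref{lem: lim first layer} directly: the expectation converges to $\tilde{\phi}_1(\eta_1, \al_f(\{-1,1\}) m^*)$, i.e.\ to
\[
\al(1) e^{-\beta}\Bigl(\eta_1 e^{-2t} \sinh\bigl(\tfrac{\beta \al_f(\{-1,1\}) m^*}{2}\bigr) + \cosh\bigl(\tfrac{\beta \al_f(\{-1,1\}) m^*}{2}\bigr)\Bigr).
\]

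For the denominator I would then sum the two $\bar\eta_1 = \pm 1$ contributions just obtained. Since the $\sinh$-terms are antisymmetric in $\bar\eta_1$, they cancel, leaving $2\al(1) e^{-\beta}\cosh(\beta \al_f(\{-1,1\}) m^*/2)$; adding the $\al(0)$ coming from $\bar\eta_1 = 0$ gives the denominator of the claimed kernel. Since $\cosh \ge 1$ and $\al(1) e^{-\beta} > 0$, this denominator is uniformly bounded away from zero, so dividing the two limits yields the stated formula for $\g_{\b,\al,t}(\eta_1 | \al_f)$.

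Independence from the specific choice of sequence $(\eta_{[2,N]})_{N \ge 2}$ then follows automatically, because the limit depends on the sequence only through $\al_f$: both $\tilde\beta = \tfrac{\beta}{2}\al_f(\{-1,1\})$ and $\al_f^*$ are functions of $\al_f$ alone, hence so is the minimizer $m^*$ of $\phi_{\tilde\beta, t, \al_f^*}$. The actual technical work is concentrated in Lemma \ref{lem: lim first layer}, which combines the uniform convergence $\tilde{\phi}_{1,N} \to \tilde{\phi}_1$ with the Hubbard--Stratonovich magnetization concentration of \cite{kuelske-le07} applied to the constrained first-layer model viewed as a Curie--Weiss model on the random occupied sublattice $S(\eta_{[2,N]})$; the present lemma is essentially the corresponding corollary, the only added ingredient being the symmetrization of the two $\sinh$-terms in the denominator.
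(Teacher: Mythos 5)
Your proposal is correct and follows essentially the same route as the paper: the paper's own proof simply cites the representation \eqref{equi: meas rep } together with Lemma \ref{lem: lim first layer} and reads off the kernel from $\tilde\phi_1$, which is exactly your term-by-term limit (your added remarks on the cancellation of the $\sinh$-terms in the denominator and on the dependence only through $\al_f$ are correct elaborations of what the paper leaves implicit).
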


Note that, while the set of bad empirical measures does not depend on the value of $\a(0)$, the form 
of the specification kernel does depend on the value of $\a(0)$, wherever it is well-defined.

\begin{proof}
	By \eqref{equi: meas rep } we have a representation of $\mu_{\beta, t, N}(\eta_1|\eta_{[2,N]})$ in terms of the first-layer model. With  the assumption of this theorem it follows by Lemma \ref{lem: lim first layer} that the first-layer model has a limit. The particular form of the specification kernel is given by the function $\tilde\phi_1$ defined in Lemma \ref{lem: lim first layer}.
\end{proof}

\begin{proof}[Proof of Theorem \ref{thm: time evolved} ] 
	With Lemma \ref{lem: limit exists} the existence of the limit of the 
	conditional probability is connected to the unique minimizer of $\phi_{\tilde\beta,t,\al_f^*}$ Luckily the issue of the location of the regions of uniqueness is completely solved by \cite{kuelske-le07}. For a given $\beta$ and $\al_f$ we use their results with  $\tilde{\beta}= \frac{\beta}{2}\al_f(\{-1,1\})$ and magnetization $\al_f^*= \frac{\al_f(1)-\al_f(-1)}{\al_f(\{-1,1\})}$. This leads to the regimes of Gibbsianness for the Curie-Weiss Widom-Rowlinson model.
\end{proof}
\subsection{Time-evolved antiferromagnetic model}
For the antiferromagnetic model there exist no bad empirical measures. In order to see this note that Lemma \ref{lem: lim first layer} is still true for $\beta<0$ and all rewriting of the model does not depend on the sign of $\beta$. Furthermore the function $\phi_{\tilde\beta,t,\al_f^*}$ is strictly convex. Hence for all $\beta<0, t>0$ and $\al_f\in\mathcal{M}_1(E)$ there exists a unique minimizer of $\phi_{\tilde\beta,t,\al_f^*}$ and therefore no bad empirical measures exist by Lemma \ref{lem: limit exists}.
\subsection{Atypicality of bad empirical measures}\label{sec: atypi}

We obtain the minimizers $\nu_t\in\mathcal{M}_1(E)$    of the dynamic rate function 
from the minimizers $\nu_0\in\mathcal{M}_1(E)$ of the static rate function, via the relation
\begin{align}\label{equi: time evo mini}
\nu_t(1)-\nu_t(-1)= e^{-2t}(\nu_0(1)-\nu_0(-1)), 
\end{align} 
together with $\nu_t(0)= \nu_0(0)$, since the hole density does not change over time. 

For $\beta$ below $2$ there are no bad empirical measures, so fix $\beta>2$. 
We split our analysis into two parts. First we consider the symmetric bad empirical measures. In the second part consider 
only the asymmetric ones. 

Note that a symmetric minimizer of the static rate function remains a minimizer of the dynamic rate function, for any time.  
Symmetric minimizers $\nu_{\beta,q}$ of the static model only exist if $\beta\leq\beta_c(q)$ and are then given by the solution of equation \eqref{equ: mean-field} where we defined $q= \frac{\al(0)}{\al(1)}$. Equivalently, symmetric minimizers only exist if $q\in[\frac{\beta-2}{e},\infty)$. 
Furthermore $\nu_{\beta,q}(0)$ is decreasing with decreasing $q$ which implies that  $\nu_{\beta,q}(0)$ is minimal if $q=q_m:= \frac{\beta-2}{e}$. For  $q_m$ we have $\beta=\beta_c(q_m)$ and at this value of $\beta$ we know that $\nu_{\beta,q_m}(0)= 1- \frac{2}{\beta}$ by the proof of Lemma \ref{lem: matrix inde}. This implies that all symmetric bad empirical measures are atypical, for the following reason. 
A necessary condition such that $\al_f$ could be a (symmetric) bad empirical measure  is that the effective inverse 
temperature on the occupied sites $\tilde\b$ is bigger than $1$. But this is equivalent to $\al_f(0)<1-\frac{2}{\beta}$. 

Next we discuss the asymmetric bad empirical measures, assuming 
$\b>3$, 
using the parametrizations of Theorem \ref{closedsolution}.    
To get the curve of asymmetric minimizers of the time-evolved rate function parametrized by $m$ for arbitrary
$\al(0)$ one can rewrite equation \eqref{equi: asym beta} as
\begin{align*}
\al_0(m,\beta) = \frac{\beta-\frac{2}{m}I'(m)}{\beta+\frac{2}{m} I'(m)(-1+e^{  \frac{1}{m}I'(m)-mI'(m)+I(m)})}.
\end{align*}
With the formula of the particle density $x(m,\al(0))$ the curve of asymmetric minimizers of the time-evolved 
rate function of the symmetric Widom-Rowlinson model, after time $t$, is given by 
\begin{align*}
\mathcal{M}_{t}:=\Bigl\{\bigl(x(m,\al_0(m,\beta))\frac{1+m e^{-2t} }{2}, 
x(m,\al_0(m,\beta))\frac{1-m e^{-2t} }{2}
, 1-x(m,\al_0(m,\beta))\bigr)\,:\,{m\in (-m_\beta,m_\beta)} \Bigr\}
\end{align*}
where $m_\beta:=\max \{ m\in(0,1) \,:\, \al_0(m,\beta)>0\}$. \\
By \cite{kuelske-le07} the set of bad empirical measures at fixed $\beta$ and $t<t_3$ is contained in the set $A_{bad}$ whose boundary is given by $4$ curves $C_1,C_2,C_3,C_4$, see the black lines in fig. \ref{fig: atypic_smaller}.  For more details, see Proposition 4.4. where functions $\alpha_{12},\beta_{12}$ describing the relevant 
bifurcation set (which is sheltering the Maxwell lines which relate to the actual bad empirical measures) are introduced.  
Later only the curve $C_1$ will be of interest.  We carry out the map back to the simplex for the Widom-Rowlinson model, 
taking into account effective 
temperature as it relates to repulsion strength and occupation density which gives us 
\begin{align*}
C_1=\{\frac{\beta_{12}(M,h_t)}{\beta}(1+\alpha_{12}(M,h_t)),\frac{\beta_{12}(M,h_t)}{\beta}(1-\alpha_{12}(M,h_t)),
1- 2\frac{\beta_{12}(M,h_t)}{\beta})\,:\, M\in (M_l,M_u) \}
\end{align*}
where $M_u(t):= \argmax_{M>0} \al_{12}(M,h_t)$, $M_l= \inf\{M>0 \,:\, \beta_{12}(M,h_t)=\beta\}$ and we set $\inf\emptyset=0$. The curve
 $C_2$ is identical to the curve $C_1$ mirrored at the  $\pm$-symmetry axis of the simplex.  The curve 
 $C_3$ connects the upper endpoints of $C_1$ and $C_2$. The curve $C_4$ is a line-segment 
in the lower face of the simplex connecting the lower endpoints of the two curves; it is only present in the region 
of two disconnected curves, before the merging to the $Y$-shaped set of bad empirical measures has taken place. 
 
To prove the atypicality of the asymmetric parts of the bad empirical measures in the sense of Definition \ref{asq} it is enough 
to show  that for every $\beta$ and $t$ the intersection of $C_1,C_2$ and $\mathcal{M}_{t}$ is empty. 
This is clear, since we have a concentration of the typical empirical measures for static model, and hence 
also for the dynamic model at any fixed time, which is exponentially fast in the system size.  
 By symmetry we need only to focus on $C_1$ and 
the left arm of $\mathcal{M}_{t}$. 
Hence, the necessary conditions for an intersection are the two equations 
$x(m,\al_0(\beta,m)) = 2 \frac{\beta_{12}(M,h_t)}{\beta}$ and $x(m,\al_0(\beta,m))\frac{m e^{-2t}+1 }{2}= \frac{\beta_{12}(M,h_t)}{\beta}(1+\alpha_{12}(M,h_t))$ for some $m$ and $M$. Combining both equations yields
$m= e^{2t}\alpha_{12}(M,h_t)$, 
and putting this into the first equation gives
$\frac{2}{\beta}\beta_{12}(M,h_t) =x(e^{2t}\alpha_{12}(M,h_t), \alpha_0(e^{2t}\alpha_{12}(M,h_t),\beta))$
The r.h.s. simplifies to $\frac{2I'(e^{2t}\alpha_{12}(M,h_t))}{\beta e^{2t}\alpha_{12}(M,h_t)}$ which implies that the last equation does not depend on $\beta$. By the analysis of Section \ref{sec: proofs static} the function $\frac{I'(m)}{m}$ is monotonically increasing for $m>0$ and by \cite{kuelske-le07} it is known that $\alpha_{12}$ is monotonically increasing from $0$ to $M_u(t)$. Also it is known that $\beta_{12}$ is monotonically decreasing from $0$ to $M_u(t)$.  Therefore it suffices to consider the case of  $M=M_u(t)$. In this way we can reduce the proof of atypicality of 
non-symmetric bad empirical measures for all parameters for which they possibly occur, to showing the following inequality 
for a  function of one variable (namely time $t$) on a compact interval 
\begin{align}\label{equi: strange function}
[0,t_3]\ni t\mapsto \frac{I'(e^{2t}\alpha_{12}(M_u(t),h_t))}{e^{2t}\beta_{12}(M_u(t),h_t)\alpha_{12}(M_u(t),h_t)}< 1
\end{align}
Numerics shows that the l.h.s. is increasing, and as $\lim_{x \downarrow 0}I'(x)/x=1$, the sup is achieved at the right endpoint 
with value $\beta^{-1}_{12}(M_u(t_3),h_{t_3})=\frac{2}{3}$. 
\begin{figure}[!ht]\label{fig: atypic_smaller}\centering
	{\includegraphics[trim = 0mm 40mm 0mm 0mm  , clip,width=0.4\textwidth]{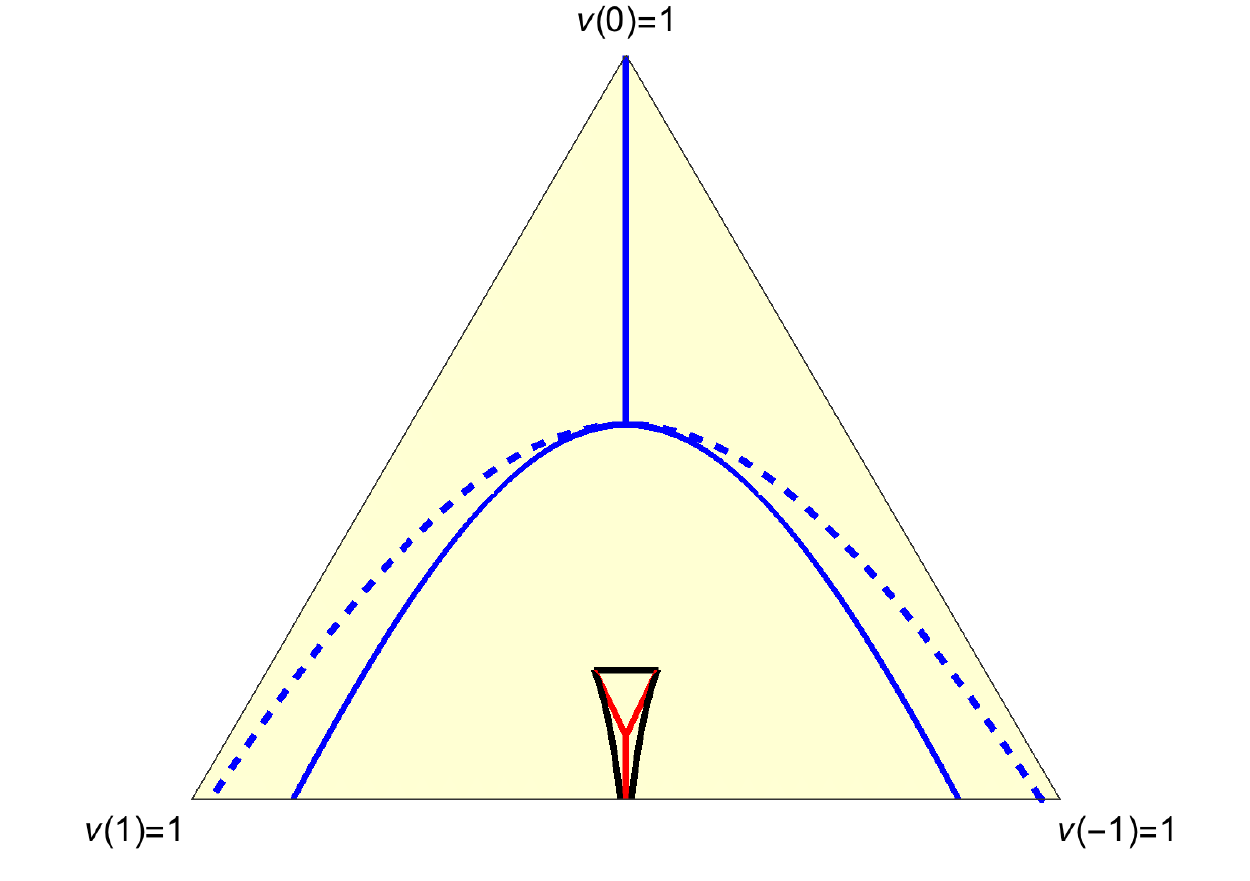}}
	\caption{Bifurcation set (black) bounding the set of bad
	empirical measures (red)}
\end{figure}
\section*{Acknowledgements}
We thank Richard Kraaij for pointing out the connection between semi-concavity and bad magnetizations for the Curie-Weiss model.\\
Sascha Kissel has been supported by
the German Research Foundation (DFG) via Research Training Group RTG 2131 \textit{High dimensional
	Phenomena in Probability - Fluctuations and Discontinuity}.

 \bibliographystyle{plain}
 \bibliography{mean-field-sc}

\end{document}